\newtheorem{theorem}{Theorem}
\newtheorem{lemma}{Lemma}
\newtheorem{proposition}{Proposition}
\newtheorem{definition}{Definition}
\newtheorem{remark}{Remark}
\DeclareMathOperator*{\spec}{spectrum}
\newcommand{\differential}{{\rm{d}}}
\newcommand{\tr}{{\rm{trace}}}
\newcommand{\RNum}[1]{\uppercase\expandafter{\romannumeral #1\relax}}
\def\BibTeX{{\rm B\kern-.05em{\sc i\kern-.025em b}\kern-.08em
    T\kern-.1667em\lower.7ex\hbox{E}\kern-.125emX}}
\title{
Fixed Horizon Linear Quadratic Covariance Steering in Continuous Time with Hilbert-Schmidt Terminal Cost
}
\author{Tushar Sial and Abhishek Halder, \IEEEmembership{Senior Member, IEEE}
\thanks{Tushar Sial and Abhishek Halder are with the Department of Aerospace Engineering, Iowa State University, Ames, IA 50011, USA, {\tt\small{\{tsial,ahalder\}@iastate.edu}}.%
}
\thanks{This research was supported by NSF award 2111688.}
}
\begin{document}
\maketitle

\begin{abstract}
We formulate and solve the fixed horizon linear quadratic covariance steering problem in continuous time with a terminal cost measured in Hilbert-Schmidt (i.e., Frobenius) norm error between the desired and the controlled terminal covariances. For this problem, the necessary conditions of optimality become a coupled matrix ODE two-point boundary value problem. To solve this system of equations, we design a matricial recursive algorithm and prove its convergence. The proposed algorithm and its analysis make use of the linear fractional transforms parameterized by the state transition matrix of the associated Hamiltonian matrix. To illustrate the results, we provide two numerical examples: one with a two dimensional and another with a six dimensional state space.
\end{abstract}

\begin{IEEEkeywords}
Stochastic optimal control, stochastic systems, uncertain systems.
\end{IEEEkeywords}


\section{Introduction}\label{sec:introduction}
We consider the following continuous-time linear quadratic (LQ) covariance steering problem:
\begin{subequations}
\begin{align}
    &\inf_{\bm{u}_{t}\in \mathcal{U}}\:\phi\left(\bm{\Sigma}_1,\bm{\Sigma}_{d}\right)+\!\int_{t_0}^{t_1}\!\!\mathbb{E}\left(\| \bm{u}_{t} \|_2^2 + \bm{x}_t^{\top}\bm{Q}_{t}\bm{x}_t \right)\differential t  \label{OCPobjective}\\
    &\text{subject to}\nonumber\\
    &\differential\bm{x}_t = \bm{A}_t\bm{x}_t\:\differential t + \bm{B}_t\bm{u}_t\:\differential t + \bm{B}_t\:\differential\bm{w}_t, \label{OCPdynamics}\\
    &\bm{x}_{0}:=\bm{x}(t=t_0) \sim \rho_{0}:=\mathcal{N}\left(\bm{0},\bm{\Sigma}_0\right), \rho_{d} := \mathcal{N}\left(\bm{0},\bm{\Sigma}_d\right),\label{OCPinitialANDdesiredPDFs}
\end{align}
\label{OCP}
\end{subequations}
where the time horizon is finite and fixed as $[t_0,t_1]$, the symbol $\mathbb{E}$ denotes the expectation w.r.t. the state vector $\bm{x}_{t}\in\mathbb{R}^{n}$, the $\mathcal{N}$ denotes multivariate normal probability density function (PDF), and $\sim$ is a shorthand for ``follows the distribution". In \eqref{OCPinitialANDdesiredPDFs}, the $\rho_0$ is the PDF for the initial state $\bm{x}_0$, and the $\rho_d$ is the desired PDF for the terminal state at $t=t_1$. Depending on the choice of control policy, the statistics of the controlled terminal state $\bm{x}_{1}:=\bm{x}(t=t_1)$ may not match with the desired terminal statistics $\rho_d$.

In \eqref{OCPobjective}, $\bm{\Sigma}_{1}$ is the covariance of the terminal state $\bm{x}_{1}:=\bm{x}(t=t_1)$, which depends on the choice of control policy. The terminal cost $\phi$ is a measure of error between the terminal state covariance $\bm{\Sigma}_{1}$ and a desired state covariance $\bm{\Sigma}_{d}$. While many choices for $\phi$ are possible, in this work, we fix $\phi$ to be the squared Frobenius or Hilbert-Schmidt norm:
\begin{align}
&\phi\left(\bm{\Sigma}_1,\bm{\Sigma}_{d}\right) := \frac{1}{2}\|\bm{\Sigma}_1-\bm{\Sigma}_{d}\|_{\mathrm{Frobenius}}^{2}\nonumber\\
&= \frac{1}{2}\langle\bm{\Sigma}_1-\bm{\Sigma}_{d},\bm{\Sigma}_1-\bm{\Sigma}_{d}\rangle=\frac{1}{2}\tr\left(\bm{\Sigma}_1-\bm{\Sigma}_{d}\right)^2,
\label{DefTerminalCost}
\end{align}
wherein the Hilbert-Schmidt inner product $\langle\bm{M},\bm{N}\rangle := \tr\left(\bm{M}^{\top}\bm{N}\right)$. The prior work \cite{halder2016finite} considered LQ covariance steering with $\phi$ as the squared Bures-Wasserstein distance metric \cite{bhatia2019bures} between $\bm{\Sigma}_1, \bm{\Sigma}_{d}$, given by
\begin{align}
\frac{1}{2}\tr\left(\bm{\Sigma}_1+\bm{\Sigma}_{d}-2\left(\bm{\Sigma}_1^{\frac{1}{2}}\bm{\Sigma}_{d}\bm{\Sigma}_1^{\frac{1}{2}}\right)^{\frac{1}{2}}\right),
\label{squaredBuresWasserstein}    
\end{align}
where the exponent $\frac{1}{2}$ denotes principal square root. The metric \eqref{squaredBuresWasserstein} originates from the theory of optimal transport \cite{givens1984class,gelbrich1990formula}, and coincides with \eqref{DefTerminalCost} when $\bm{\Sigma}_1,\bm{\Sigma}_d$ commute (e.g., when the state dimension $n=1$).

Intuitively, the terminal cost $\phi$ penalizes the mismatch between the second-order statistics of the controlled terminal state $\bm{x}_{1}$ from that of a desired state. In this sense, $\bm{\Sigma}_{d}$ can be interpreted as a desired statistical tolerance for control performance at $t=t_1$.

The set of admissible control inputs 
\begin{align*}
\mathcal{U}:=\bigg\{&\bm{u}:[t_0,t_1]\times\mathbb{R}^{n}\mapsto\mathbb{R}^{m}\mid \mathbb{E}_{\rho}\bigg\{\!\int_{t_0}^{t_1}\!\|\bm{u}\|_2^2\differential t\bigg\}<\infty\\
&\forall\rho:\mathbb{R}^{n}\mapsto\mathbb{R}_{\geq 0}, \int_{\mathbb{R}^{n}}\rho\:\differential\bm{x} = 1, \int_{\mathbb{R}^{n}}\|\bm{x}\|_2^2\rho\:\differential\bm{x} < \infty\bigg\}
\end{align*} 
in \eqref{OCPobjective} comprises adapted finite energy control policies such that \eqref{OCPdynamics} has a strong solution.

In the controlled It\^{o} diffusion \eqref{OCPdynamics}, the standard Wiener process $\bm{w}_t\in\mathbb{R}^{m}$. We take the same $\bm{B}_t$ as the control and the noise coefficients, i.e., assume that the input and the noise channels are identical. This is indeed common in practice, e.g., when the noise is due to stochastic actuation, or when the noise enters through external force/torque/current.

In \eqref{OCPinitialANDdesiredPDFs}, the initial state PDF $\rho_0$ and the desired terminal sate PDF $\rho_{d}$ are centered multivariate normals with prescribed covariances $\bm{\Sigma}_0,\bm{\Sigma}_{d}$, respectively.

With $\Phi$ as in \eqref{DefTerminalCost}, we make the following assumptions about the data for problem \eqref{OCP}.
\begin{itemize}
\item[\textbf{A1.}] The pair $\left(\bm{A}_t,\bm{B}_{t}\right)$ is bounded and continuous w.r.t. $t\in[t_0,t_1]$, and is uniformly controllable\footnote{i.e., the controllability Gramian is strictly positive definite $\forall [s,t]\subseteq [t_0,t_1]$.}. 

\item[\textbf{A2.}] The state cost-to-go weights $\bm{Q}_{t}\succeq\bm{0}$ for all $t\in[t_0,t_1]$.

\item[\textbf{A3.}] The covariances $\bm{\Sigma}_{0},\bm{\Sigma}_{d}\succ\bm{0}$.
\end{itemize}

We wish to design a linear feedback controller 
\begin{align}
\bm{u}_{t} = \bm{K}_{t}\bm{x}_t 
\label{LinearController}    
\end{align}
with deterministic time-varying gain $\bm{K}_{t}:[t_0,t_1]\mapsto\mathbb{R}^{m\times n}$, so as to steer the state covariance from $\bm{\Sigma}_{0}$ at $t=t_0$, to a neighborhood of $\bm{\Sigma}_{d}$ at $t=t_1$, that is optimal w.r.t. \eqref{OCP}. 

We clarify here that the zero mean assumptions for $\rho_0,\rho_{d}$ are without loss of generality, since non-zero means can be tackled as usual by adding a time-varying affine term to \eqref{LinearController}, and separately designing a controller for the same (see Remark \ref{Remark:NonzeroMean} in Sec. \ref{sec:MainIdeas}).

\subsubsection*{Motivation} Historically, the steering of state covariance\footnote{i.e., the covariance of the controlled state $\bm{x}_t\in\mathbb{R}^{n}$ at time $t\in[t_0,t_1]$} $\bm{\Sigma}_t$ toward a desired covariance $\bm{\Sigma}_d$ has been motivated \cite{hotz1987covariance} by interpreting $\bm{\Sigma}_d$ as a specification of acceptable statistical performance. For example, aerospace guidance and navigation problems with desired performance are often formulated as covariance steering \cite{zhu1995covariance,ridderhof2020chance,benedikter2022convex,kumagai2024sequential}.

The covariance $\bm{\Sigma}_0$ captures the initial condition uncertainty, and is usually available from an estimator. A terminal cost $\phi\left(\bm{\Sigma}_1,\bm{\Sigma}_d\right)$ allows the control designer to trade off the cost for terminal second-order statistics mismatch with the average cost-to-go $\!\int_{t_0}^{t_1}\mathbb{E}\left(\| \bm{u}_{t} \|_2^2 + \bm{x}_t^{\top}\bm{Q}_{t}\bm{x}_t \right)\differential t$.

Our choice of the specific terminal cost \eqref{DefTerminalCost} is motivated by that it is the simplest, among other choices such as \eqref{squaredBuresWasserstein}, which is the non-commutative version of \eqref{DefTerminalCost}. While the work in \cite{halder2016finite} derived the conditions for optimality with terminal cost \eqref{squaredBuresWasserstein}, a custom numerical algorithm remains unavailable in that more general setting. By considering the simpler terminal cost \eqref{DefTerminalCost}, this work designs a provably convergent numerical algorithm for solving problem \eqref{OCP}. 

From a geometric viewpoint, the terminal cost \eqref{DefTerminalCost} is Euclidean in the sense that it ignores the Riemannian structure \cite[Ch. 6]{bhatia2009positive} of the cone of positive definite covariance matrices. Even so, our developments here reveal considerable subtlety in the design and analysis of the algorithm, and advances the state-of-the-art in numerically solving this class of problems.

\subsubsection*{Related works on covariance steering with terminal cost} In the \emph{discrete-time} case, there is a growing literature \cite{balci2020covariance,balci2021convexity,yin2022trajectory,saravanos2024distributed,morimoto2024minimum,nakashima2025formation} on fixed horizon LQ covariance steering problems with terminal cost. For the terminal cost $\Phi$, these works use the squared Bures-Wasserstein metric \eqref{squaredBuresWasserstein} or its unregistered version, the Gromov-Wasserstein metric \cite{memoli2011gromov,salmona2022gromov}.

In the \emph{continuous-time} case, fixed horizon covariance steering with terminal cost remains relatively underexplored. The first such work \cite{halder2016finite} considered fixed horizon LQ covariance steering with the Bures-Wasserstein terminal cost and derived the conditions for optimality for the same. The computation therein was done by shooting method, but a more principled algorithm to solve the same is not available. More recent work in \cite{hoshino2023finite} derives the conditions of optimality for the general nonlinear non-Gaussian version of this problem with the Wasserstein terminal cost, in the form of forward-backward stochastic differential equations. However, this work does not investigate the numerical solution.

The main reason behind this imbalance of literature between the discrete-time and continuous-time formulations is computational difficulty. The discrete-time LQ covariance steering with terminal cost naturally leads to semidefinite programming, hence is amenable to off-the-shelf interior point solvers. In contrast, the continuous-time LQ covariance steering with terminal cost leads to a coupled nonlinear system of matricial ODEs, and it is not obvious what is the principled algorithmic approach to solve the same.

\subsubsection*{Contributions} The specific contributions of this work are threefold.
\begin{itemize}
    \item For the fixed horizon LQ covariance steering problem with Hilbert-Schmidt terminal cost, we derive the conditions of optimality as a matricial boundary value problem in state covariance and its costate. 

    \item Informed by the structure of the coupled matrix equations resulting from the aforesaid conditions of optimality, we propose a symmetric matrix-valued recursive algorithm to numerically solve the same. One step of the proposed matricial recursion is a composition of four maps: one of them is affine, two are linear fractional transforms associated with two Riccati matrix ODEs, and the remaining is an analytically solvable stabilizing solution of a special continuous-time algebraic Riccati equation.  

    \item We prove that the proposed nonlinear matrix-valued recursion is convergent to its unique fixed point. We demonstrate the proposed method on two numerical examples: a noisy double integrator and a close-proximity orbital rendezvous.
\end{itemize}

\subsubsection*{Notational preliminaries} Most notations are introduced in situ. Throughout, we use $\otimes, \odot$ to denote the Kronecker product and the Hadamard (i.e., element-wise) product, respectively. As standard, all vectors are column vectors by default; row vectors are obtained by transposition, denoted as superscript $^{\top}$. The symbols ${\mathrm{vec}}, {\mathrm{vech}}, {\mathrm{diag}}$ are used to denote the vectorization (for matrix argument), the half-vectorization (for symmetric matrix argument), and the diagonal matrix operator (for vector argument), respectively. We use $\differential$ and ${\mathrm{D}}$ for the differential and the Jacobian, respectively. For computing the derivative of
a matrix-to-matrix map $\bm{F}(\bm{X})$, we utilize the Jacobian identification rule \cite[Ch. 9, Table 9.2]{magnus2019matrix}: ${\mathrm{vec}}\left(\differential\bm{F}(\bm{X})\right)={\mathrm{D}}\bm{F}(\bm{X})\differential\:{\mathrm{vec}}(\bm{X})$. For any random vector $\bm{x}$, its covariance matrix $\bm{\Sigma}:=\mathbb{E}\left[\left(\bm{x}-\mathbb{E}\left[\bm{x}\right]\right)\left(\bm{x}-\mathbb{E}\left[\bm{x}\right]\right)^{\top}\right]$ where the expectation operator $\mathbb{E}$ is taken w.r.t. the law or probability distribution of $\bm{x}$.

\subsubsection*{Organization} In Sec. \ref{sec:MainIdeas}, we present the conditions for optimality (Proposition \ref{prop:FOOC}) in the form of a matricial boundary value problem in state covariance and its costate. The equations include a forward-in-time Lyapunov matrix ODE and a backward-in-time Riccati matrix ODE. After a change of variable, this Lyapunov-Riccati system transforms into a Riccati-Riccati system of ODE boundary value problem. To solve this system of equations, in Sec. \ref{sec:Mappings}, we construct three matrix-valued mappings and analyze their properties. In Sec. \ref{sec:RecrsiveAlgorithm}, we use these mappings to propose a recursive algorithm for solving the conditions for optimality, and establish convergence guarantee (Theorem \ref{thm:mainresult}) for the proposed algorithm. Sec. \ref{sec:examples} details two numerical examples to illustrate the proposed method. Sec. \ref{sec:conclusions} concludes the paper. Auxiliary lemmas needed to prove the main results are collected in Appendix \ref{App:LemmaF4bound}.

\section{Conditions for Optimality}\label{sec:MainIdeas}
From \eqref{OCPdynamics} and \eqref{LinearController}, the controlled state $\bm{x}_{t}\sim\mathcal{N}\left(\bm{0},\bm{\Sigma}_{t}\right)$ with covariance dynamics
\begin{align}
\dot{\bm{\Sigma}}_{t} = \left(\bm{A}_t + \bm{B}_t\bm{K}_t\right)\bm{\Sigma}_t + \bm{\Sigma}_t\left(\bm{A}_t + \bm{B}_t\bm{K}_t\right)^{\!\top} \!\!+ \bm{B}_t\bm{B}_t^{\top}
\label{CovarianceDynamics}    
\end{align}
on $\mathbb{S}^{n}_{++}$, the cone of positive definite matrices. Thanks to the quadratic objective \eqref{OCPobjective}, we can then view $\bm{\Sigma}_t\in\mathbb{S}^{n}_{++}$ itself as the state, and define $\bm{P}_{t}\in\mathbb{S}^{n}$ (the affine set of $n\times n$ real symmetric matrices) as the costate. Specifically, we have the following conditions for optimality.

\begin{proposition}\label{prop:FOOC}
Consider problem \eqref{OCP} with assumptions \textbf{A1}-\textbf{A3}, and linear feedback \eqref{LinearController}. The necessary condition for optimality is a coupled ODE boundary value problem on the cotangent bundle $\mathcal{T}^{*}\mathbb{S}^{n}_{++}=\mathbb{S}^{n}_{++}\times\mathbb{S}^{n}$ in unknown $\left(\bm{\Sigma}_t^{\mathrm{opt}},\bm{P}_t^{\mathrm{opt}}\right)$, given by
\begin{subequations}
\begin{align}
\dot{\bm{\Sigma}}_{t}^{\mathrm{opt}} &= \left(\bm{A}_t - \bm{B}_t \bm{B}_t^{\top}\bm{P}_t^{\mathrm{opt}}\right)\bm{\Sigma}_t^{\mathrm{opt}} \nonumber\\
&+ \bm{\Sigma}_t^{\mathrm{opt}}\left(\bm{A}_t - \bm{B}_t\bm{B}_t^{\top}\bm{P}_t^{\mathrm{opt}}\right)^{\!\top}+ \bm{B}_t\bm{B}_t^{\top}, \label{CovarianceStateODE}\\
-\dot{\bm{P}}_{t}^{\mathrm{opt}} &= \bm{A}_{t}^{\top}\bm{P}_{t}^{\mathrm{opt}} +\bm{P}_{t}^{\mathrm{opt}}\bm{A}_{t} - \bm{P}_t^{\mathrm{opt}}\bm{B}_{t}\bm{B}_{t}^{\top}\bm{P}_{t}^{\mathrm{opt}} + \bm{Q}_{t}, \label{CovarianceCostateODE}\\
\bm{\Sigma}_0&\succ\bm{0} \quad\text{given}, \label{InitialCovariance}\\
\bm{P}_{1}^{\mathrm{opt}} &= \bm{\Sigma}_{1}^{\mathrm{opt}}-\bm{\Sigma}_{d}, \quad \bm{\Sigma}_d\succ\bm{0} \quad\text{given}. \label{TransveersalityCondition}
\end{align}
\label{FOOC}
\end{subequations}
The associated optimal gain 
\begin{align}
\bm{K}_{t}^{\mathrm{opt}} = -\bm{B}_{t}^{\top}\bm{P}_{t}^{\mathrm{opt}},
\label{OptimalGain}    
\end{align}
and the optimal control $\bm{u}_{t}^{\mathrm{opt}} = \bm{K}_{t}^{\mathrm{opt}}\bm{x}_t$.
\end{proposition}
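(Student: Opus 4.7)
\textbf{Proof plan for Proposition \ref{prop:FOOC}.} The plan is to reduce the stochastic optimal control problem, under the restriction to linear state feedback \eqref{LinearController}, to a deterministic matrix-valued optimal control problem on $\mathbb{S}^{n}_{++}$ and then invoke Pontryagin's principle. First, I would rewrite the running cost in \eqref{OCPobjective} using $\bm{x}_t\sim\mathcal{N}(\bm{0},\bm{\Sigma}_t)$ and $\bm{u}_{t}=\bm{K}_{t}\bm{x}_t$, obtaining
\begin{align*}
\mathbb{E}\!\left\{\|\bm{u}_t\|_2^2+\bm{x}_t^{\top}\bm{Q}_t\bm{x}_t\right\}=\tr\!\left((\bm{Q}_t+\bm{K}_t^{\top}\bm{K}_t)\bm{\Sigma}_t\right).
\end{align*}
Together with the Lyapunov covariance dynamics \eqref{CovarianceDynamics} and the terminal cost \eqref{DefTerminalCost}, this gives an equivalent deterministic problem with state $\bm{\Sigma}_t\in\mathbb{S}^{n}_{++}$ and control $\bm{K}_t\in\mathbb{R}^{m\times n}$, free terminal state, and prescribed initial state $\bm{\Sigma}_0$.

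Next, I would introduce the symmetric costate $\bm{P}_t\in\mathbb{S}^{n}$ as the Lagrange multiplier adjoint to \eqref{CovarianceDynamics} and form the control Hamiltonian
\begin{align*}
H(\bm{\Sigma},\bm{K},\bm{P},t)&=\tr\!\left((\bm{Q}_t+\bm{K}^{\top}\bm{K})\bm{\Sigma}\right)\\
&\quad+\tr\!\left(\bm{P}\bigl[(\bm{A}_t+\bm{B}_t\bm{K})\bm{\Sigma}+\bm{\Sigma}(\bm{A}_t+\bm{B}_t\bm{K})^{\top}+\bm{B}_t\bm{B}_t^{\top}\bigr]\right).
\end{align*}
The state equation is $\dot{\bm{\Sigma}}_t=\partial H/\partial\bm{P}_t$, which reproduces \eqref{CovarianceDynamics}. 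The costate equation $-\dot{\bm{P}}_t=\partial H/\partial\bm{\Sigma}_t$ yields, using $\partial\tr(\bm{M}\bm{\Sigma})/\partial\bm{\Sigma}=\mathrm{sym}(\bm{M})$ and the symmetry of $\bm{P}_t$,
\begin{align*}
-\dot{\bm{P}}_t=\bm{Q}_t+\bm{K}_t^{\top}\bm{K}_t+(\bm{A}_t+\bm{B}_t\bm{K}_t)^{\top}\bm{P}_t+\bm{P}_t(\bm{A}_t+\bm{B}_t\bm{K}_t).
\end{align*}
The stationarity condition $\partial H/\partial\bm{K}_t=2(\bm{K}_t+\bm{B}_t^{\top}\bm{P}_t)\bm{\Sigma}_t=\bm{0}$, together with $\bm{\Sigma}_t\succ\bm{0}$ (assumption \textbf{A3} propagated by \eqref{CovarianceDynamics} via a Gronwall-type argument on the Lyapunov equation), gives \eqref{OptimalGain}, $\bm{K}_t^{\mathrm{opt}}=-\bm{B}_t^{\top}\bm{P}_t^{\mathrm{opt}}$. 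Substituting \eqref{OptimalGain} back into the state and costate equations and simplifying the cross terms yields \eqref{CovarianceStateODE} and \eqref{CovarianceCostateODE}, respectively.

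Finally, the transversality condition at $t=t_1$ comes from differentiating the terminal cost \eqref{DefTerminalCost}: since $\Phi(\bm{\Sigma}_1,\bm{\Sigma}_d)=\tfrac{1}{2}\tr(\bm{\Sigma}_1-\bm{\Sigma}_d)^{2}$ has Frechet derivative $\bm{\Sigma}_1-\bm{\Sigma}_d\in\mathbb{S}^{n}$ in its first argument, the free-endpoint transversality relation $\bm{P}_1^{\mathrm{opt}}=\partial\Phi/\partial\bm{\Sigma}_1$ reproduces \eqref{TransveersalityCondition}. Combined with the prescribed initial condition $\bm{\Sigma}_0\succ\bm{0}$, this yields the split boundary data on the cotangent bundle $\mathcal{T}^{*}\mathbb{S}^{n}_{++}$.

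The main subtlety I anticipate is bookkeeping the symmetric-matrix structure throughout. In particular, one must verify that the gradient identifications produce symmetric objects (so that the costate stays in $\mathbb{S}^{n}$, consistent with adjoining a symmetric-valued ODE constraint), that the positivity $\bm{\Sigma}_t\succ\bm{0}$ needed to invert out $\bm{\Sigma}_t$ in the stationarity condition is preserved by \eqref{CovarianceDynamics} under assumption \textbf{A1} (so that \eqref{OptimalGain} is well-defined pointwise in $t$), and that the restriction to linear feedback is compatible with the minimum principle applied to the reduced deterministic problem. Everything else is a direct matrix-calculus computation using the Jacobian identification rule recalled in the notational preliminaries.
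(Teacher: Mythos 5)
Your proposal is correct and follows essentially the same route as the paper: recast the problem as a deterministic matrix-valued optimal control problem with state $\bm{\Sigma}_t$, costate $\bm{P}_t\in\mathbb{S}^{n}$ and control $\bm{K}_t$, form the trace Hamiltonian, apply Pontryagin's principle, eliminate $\bm{K}_t$ via the stationarity condition using $\bm{\Sigma}_t\succ\bm{0}$, and read off the transversality condition from the Fr\'{e}chet derivative of \eqref{DefTerminalCost}. The extra care you take with the symmetry bookkeeping and with why $\bm{\Sigma}_t\succ\bm{0}$ persists is sound and only adds detail beyond what the paper records.
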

\begin{proof}
Using \eqref{LinearController} and the invariance of trace under cyclic permutation, we write the Hamiltonian $H:\mathcal{T}^{*}\mathbb{S}^{n}_{++}\times\mathbb{R}^{m\times n}\mapsto\mathbb{R}$ for problem \eqref{OCP} as
\begin{align}
&H\left(\bm{\Sigma}_{t},\bm{P}_{t},\bm{K}_{t}\right) = \langle\bm{\Sigma}_{t},\bm{K}_{t}^{\top}\bm{K}_{t} + \bm{Q}_{t}\rangle\nonumber\\
&\!\!+ \langle\bm{P}_{t},  \left(\bm{A}_t + \bm{B}_t\bm{K}_t\right)\bm{\Sigma}_t + \bm{\Sigma}_t\left(\bm{A}_t + \bm{B}_t\bm{K}_t\right)^{\!\top} \!\!+ \bm{B}_t\bm{B}_t^{\top}\rangle.
\label{defHamiltonian}    
\end{align}
Applying Pontryagin's minimum principle to the Hamiltonian \eqref{defHamiltonian}, at optimality we have
\begin{subequations}
\begin{align}
\dot{\bm{\Sigma}}_{t}^{\mathrm{opt}} &= \dfrac{\partial H}{\partial \bm{P}_t^{\mathrm{opt}}},\label{StateODE}\\
\dot{\bm{P}}_{t}^{\mathrm{opt}} &= -\dfrac{\partial H}{\partial \bm{\Sigma}_t^{\mathrm{opt}}},\label{CostateODE}\\
\bm{0} &= \dfrac{\partial H}{\partial \bm{K}_{t}^{\mathrm{opt}}} = \left(\bm{K}_{t}^{\mathrm{opt}}+\bm{B}_{t}^{\top}\bm{P}_{t}^{\mathrm{opt}}\right)\bm{\Sigma}_{t}^{\mathrm{opt}}.\label{Gain}
\end{align}
\label{PMP}
\end{subequations}
In particular, since \eqref{Gain} must hold for arbitrary $\bm{\Sigma}_{0}\succ\bm{0}$, and thus for arbitrary $\bm{\Sigma}_{t}^{\mathrm{opt}}\succ\bm{0}$, we obtain \eqref{OptimalGain}. 

The state ODE \eqref{StateODE} is the same as \eqref{CovarianceDynamics}, which combined with \eqref{OptimalGain}, yields \eqref{CovarianceStateODE}. Likewise, the costate ODE \eqref{CostateODE} combined with \eqref{OptimalGain}, yields \eqref{CovarianceCostateODE}. We obtain \eqref{TransveersalityCondition} using the transversality condition
$$\bm{P}_{1}^{\mathrm{opt}} = \dfrac{\partial\phi}{\partial\bm{\Sigma}_{1}^{\mathrm{opt}}}.$$ 
This completes the proof.
\end{proof} 

\begin{remark}\label{Remark:NonzeroMean}
It is straightforward to generalize Proposition \ref{prop:FOOC} for $\rho_{0}=\mathcal{N}(\bm{\mu}_{0},\bm{\Sigma}_{0})$, $\rho_{d}=\mathcal{N}(\bm{\mu}_{d},\bm{\Sigma}_{d})$ with given nonzero means $\bm{\mu}_0,\bm{\mu}_{d}\in\mathbb{R}^{n}$, and terminal cost $\phi\left(\bm{\mu}_{1},\bm{\Sigma}_{1},\bm{\mu}_{d},\bm{\Sigma}_{d}\right)=\frac{1}{2}\|\bm{\mu}_1 - \bm{\mu}_{d}\|_2^2 + \frac{1}{2}\|\bm{\Sigma}_1-\bm{\Sigma}_{d}\|_{\mathrm{Frobenius}}^{2}$. Allowing \eqref{LinearController} to have an affine term: $\bm{u}_{t} = \bm{K}_{t}\bm{x}_t + \bm{v}_{t}$, we then define an extended version of the Hamiltonian \eqref{defHamiltonian} as the mapping $H:\mathcal{T}^{*}\mathbb{R}^{n}\times\mathcal{T}^{*}\mathbb{S}^{n}_{++}\times\mathbb{R}^{m\times n}\times\mathbb{R}^{n}\mapsto\mathbb{R}$, given by
\begin{align}
&H\left(\bm{\mu}_{t},\bm{z}_{t},\bm{\Sigma}_{t},\bm{P}_{t},\bm{K}_{t},\bm{v}_{t}\right) = \langle\bm{z}_{t},\left(\bm{A}_t + \bm{B}_t\bm{K}_t\right)\bm{\mu}_{t} + \bm{B}_{t}\bm{v}_{t}\rangle\nonumber\\
&+\langle\bm{\Sigma}_{t}+\bm{\mu}_{t}\bm{\mu}_{t}^{\top},\bm{K}_{t}^{\top}\bm{K}_{t} + \bm{Q}_{t}\rangle + 2\bm{v}_{t}^{\top}\bm{K}_{t}\bm{\mu}_{t} + \bm{v}_{t}^{\top}\bm{v}_{t}  \nonumber\\
&+ \langle\bm{P}_{t},  \left(\bm{A}_t + \bm{B}_t\bm{K}_t\right)\bm{\Sigma}_t+ \bm{\Sigma}_t\left(\bm{A}_t+ \bm{B}_t\bm{K}_t\right)^{\!\top} \!\!+ \bm{B}_t\bm{B}_t^{\top}\rangle.
\label{defModifiedHamiltonian}     
\end{align}
Computation similar to the proof of Proposition \ref{prop:FOOC} finds the optimal feedback gain $\bm{K}_{t}^{\mathrm{opt}}$ as in \eqref{OptimalGain}, and the optimal feedforward gain
\begin{align}
\bm{v}_{t}^{\mathrm{opt}}=\bm{B}_{t}^{\top}\left(\bm{P}_{t}^{\mathrm{opt}}\bm{\mu}_{t}^{\mathrm{opt}}-\bm{z}_{t}^{\mathrm{opt}}\right).
\label{vopt}    
\end{align}
The optimal mean and its costate pair $\left(\bm{\mu}_{t}^{\mathrm{opt}},\bm{z}_{t}^{\mathrm{opt}}\right)\in\mathbb{R}^{n}\times\mathbb{R}^{n}$ are solved from the vector boundary value problem 
\begin{subequations}
\begin{align}
&\begin{pmatrix}\dot{\bm{\mu}}_{t}^{\mathrm{opt}}\\
\dot{\bm{z}}_{t}^{\mathrm{opt}}
\end{pmatrix}
=\left[\begin{array}{cc}
\bm{A}_t & -\bm{B}_{t}\bm{B}_{t}^{\top} \\
-\bm{Q}_{t} & -\bm{A}_{t}^{\top}
\end{array}\right]\begin{pmatrix}\bm{\mu}_{t}^{\mathrm{opt}}\\
\bm{z}_{t}^{\mathrm{opt}}
\end{pmatrix}, \label{MeanCostateODE}\\
&\bm{\mu}_{0}\in\mathbb{R}^{n}\;\text{given},\quad\bm{z}_{1}^{\mathrm{opt}} = \frac{\partial\phi}{\partial\bm{\mu}_{1}^{\mathrm{opt}}} = \bm{\mu}_{1}^{\mathrm{opt}}-\bm{\mu}_{d}.
\end{align}
\label{MeanCostateTPBVP}    
\end{subequations}
In other words, \eqref{MeanCostateTPBVP} gets appended to \eqref{FOOC}. From \eqref{vopt}, $\bm{v}_{t}^{\mathrm{opt}}$ depends on $\bm{P}_{t}^{\mathrm{opt}}$, but $\bm{K}_{t}^{\mathrm{opt}}$ is independent of the solution of \eqref{MeanCostateTPBVP}. Thus, for given nonzero $\bm{\mu}_{0},\bm{\mu}_{d}$, the zero mean covariance controller can be synthesized independent of the feedforward control design. In the remaining of this work, we focus on the covariance steering.
\end{remark}

\begin{remark}\label{Remark:DirectNumericalSolOfFOOCchallenging}
Direct numerical solution of \eqref{FOOC} is problematic. At the ODE level, even though \eqref{CovarianceCostateODE} is decoupled, \eqref{CovarianceStateODE} is coupled. At the boundary condition level, even though \eqref{InitialCovariance} is decoupled, \eqref{TransveersalityCondition} is coupled. 
\end{remark}

To circumvent the computational difficulty mentioned above, we make use of the following change-of-variable proposed in \cite{chen2015optimal,ChenGeorgiouPavonPartIII}:
\begin{align}
\bm{H}_{t} := \bm{\Sigma}_{t}^{-1} - \bm{P}_{t} \qquad\forall t\in[t_0,t_1].
\label{defH}    
\end{align}
The works in \cite{chen2015optimal,ChenGeorgiouPavonPartIII} considered the case where the terminal state covariance $\bm{\Sigma}_{1}^{\mathrm{opt}}$ was prescribed as problem data. However, in our setting, neither $\bm{\Sigma}_{1}^{\mathrm{opt}}$ nor $\bm{P}_{1}^{\mathrm{opt}}$ is known. Only their linear combination is constrained by \eqref{TransveersalityCondition}. Because of this transversality constraint, it is not obvious what, if any, computational benefit may be reaped from \eqref{defH}.

Our idea now is to set up a system of equations that folds in \eqref{defH} with the ODEs for $\bm{P}_{t},\bm{H}_{t}\in\mathbb{S}^{n}$. For this purpose, the following result will be useful.
\begin{proposition}\label{prop:Hivp}
Let $\bm{\Sigma}_{t}\in\mathbb{S}^{n}_{++}$ solve \eqref{CovarianceStateODE} with initial condition \eqref{InitialCovariance}, and let $\bm{P}_{t}\in\mathbb{S}^{n}$ solve the Riccati matrix ODE \eqref{CovarianceCostateODE} with initial condition $\bm{P}_{0}\in\mathbb{S}^{n}$. Then $\bm{H}_{t}\in\mathbb{S}^{n}$ defined in \eqref{defH} solves the Riccati matrix ODE initial value problem
\begin{subequations}
\begin{align}
-\dot{\bm{H}}_{t} &= \bm{A}_{t}^{\top}\bm{H}_{t} + \bm{H}_{t}\bm{A}_{t} + \bm{H}_{t}\bm{B}_{t}\bm{B}_{t}^{\top}\bm{H}_{t} - \bm{Q}_{t}, \label{HmatrixRiccatiODE}\\
\bm{H}_{0} &= \bm{\Sigma}_{0}^{-1} - \bm{P}_{0}. \label{HmatrixRiccatiIC}
\end{align}
\label{HmatrixRiccatiODEivp} 
\end{subequations}
\end{proposition}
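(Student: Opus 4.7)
The plan is to verify \eqref{HmatrixRiccatiIC} directly from the definition \eqref{defH} at $t=t_0$, and then derive \eqref{HmatrixRiccatiODE} by differentiating the defining identity $\bm{H}_{t}=\bm{\Sigma}_{t}^{-1}-\bm{P}_{t}$ in time and substituting the two ODEs provided in the hypothesis. Concretely, using the standard matrix identity $\frac{\differential}{\differential t}\bm{\Sigma}_{t}^{-1}=-\bm{\Sigma}_{t}^{-1}\dot{\bm{\Sigma}}_{t}\bm{\Sigma}_{t}^{-1}$, I would write
\[
-\dot{\bm{H}}_{t}=\bm{\Sigma}_{t}^{-1}\dot{\bm{\Sigma}}_{t}\bm{\Sigma}_{t}^{-1}+\dot{\bm{P}}_{t},
\]
then insert \eqref{CovarianceStateODE} into the first term and \eqref{CovarianceCostateODE} into the second.

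The substitution produces three groups of terms. Terms linear in $\bm{A}_{t}$ on the right-hand side consolidate as $\bm{\Sigma}_{t}^{-1}\bm{A}_{t}+\bm{A}_{t}^{\top}\bm{\Sigma}_{t}^{-1}-\bm{P}_{t}\bm{A}_{t}-\bm{A}_{t}^{\top}\bm{P}_{t}=\bm{H}_{t}\bm{A}_{t}+\bm{A}_{t}^{\top}\bm{H}_{t}$ after factoring. The stage-cost term $-\bm{Q}_{t}$ appears untouched from \eqref{CovarianceCostateODE}. The quadratic terms in $\bm{B}_{t}\bm{B}_{t}^{\top}$ collect as $\bm{\Sigma}_{t}^{-1}\bm{B}_{t}\bm{B}_{t}^{\top}\bm{\Sigma}_{t}^{-1}-\bm{\Sigma}_{t}^{-1}\bm{B}_{t}\bm{B}_{t}^{\top}\bm{P}_{t}-\bm{P}_{t}\bm{B}_{t}\bm{B}_{t}^{\top}\bm{\Sigma}_{t}^{-1}+\bm{P}_{t}\bm{B}_{t}\bm{B}_{t}^{\top}\bm{P}_{t}$, which is precisely the expansion of $(\bm{\Sigma}_{t}^{-1}-\bm{P}_{t})\bm{B}_{t}\bm{B}_{t}^{\top}(\bm{\Sigma}_{t}^{-1}-\bm{P}_{t})=\bm{H}_{t}\bm{B}_{t}\bm{B}_{t}^{\top}\bm{H}_{t}$. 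Assembling the three groups yields \eqref{HmatrixRiccatiODE}.

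There is no serious obstacle: the manipulation is entirely algebraic, and the essential observation driving it is that the ``$\bm{B}_{t}\bm{B}_{t}^{\top}$-quadratic'' in $(\bm{\Sigma}_{t}^{-1},\bm{P}_{t})$ factors as a perfect square in the new variable $\bm{H}_{t}=\bm{\Sigma}_{t}^{-1}-\bm{P}_{t}$. This is what makes the change of variable from \cite{chen2015optimal,ChenGeorgiouPavonPartIII} effective. The only minor bookkeeping concern is checking symmetry: since $\bm{\Sigma}_{t}\in\mathbb{S}^{n}_{++}$ and $\bm{P}_{t}\in\mathbb{S}^{n}$ by hypothesis, the identity $\bm{H}_{t}\in\mathbb{S}^{n}$ is automatic, and both sides of the derived ODE are manifestly symmetric, so no further verification is needed. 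The initial condition \eqref{HmatrixRiccatiIC} is simply \eqref{defH} evaluated at $t=t_{0}$, completing the argument.
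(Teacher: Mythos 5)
Your proposal is correct and follows exactly the route the paper takes: differentiate the defining identity $\bm{H}_{t}=\bm{\Sigma}_{t}^{-1}-\bm{P}_{t}$, apply $\frac{\differential}{\differential t}\bm{\Sigma}_{t}^{-1}=-\bm{\Sigma}_{t}^{-1}\dot{\bm{\Sigma}}_{t}\bm{\Sigma}_{t}^{-1}$, substitute \eqref{CovarianceStateODE}--\eqref{CovarianceCostateODE}, and observe that the quadratic terms assemble into $\bm{H}_{t}\bm{B}_{t}\bm{B}_{t}^{\top}\bm{H}_{t}$; the paper merely states this computation without displaying it. The term-by-term bookkeeping you give (including the initial condition from evaluating \eqref{defH} at $t=t_0$) checks out.
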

\begin{proof}
Taking the time derivative on both sides of \eqref{defH}, and then using \eqref{CovarianceStateODE}-\eqref{CovarianceCostateODE}, results in \eqref{HmatrixRiccatiODE}. Evaluating \eqref{defH} at $t=t_0$ gives \eqref{HmatrixRiccatiIC}.
\end{proof}
In light of Proposition \ref{prop:Hivp}, we are led to solve the coupled nonlinear system of equations:
\begin{subequations}
\begin{align}
\!\!-\dot{\bm{P}}_{t}^{\mathrm{opt}}\! &= \!\bm{A}_{t}^{\top}\bm{P}_{t}^{\mathrm{opt}} \!+\!\bm{P}_{t}^{\mathrm{opt}}\bm{A}_{t} - \!\bm{P}_t^{\mathrm{opt}}\bm{B}_{t}\bm{B}_{t}^{\top}\bm{P}_{t}^{\mathrm{opt}}\! + \!\bm{Q}_{t}, \label{CovarianceCostateODEFinal}\\
\!\!-\dot{\bm{H}}_{t} &= \bm{A}_{t}^{\top}\bm{H}_{t} + \bm{H}_{t}\bm{A}_{t} + \bm{H}_{t}\bm{B}_{t}\bm{B}_{t}^{\top}\bm{H}_{t} - \bm{Q}_{t}, \label{HmatrixRiccatiODEFinal}\\
\!\!\bm{H}_{0} &= \bm{\Sigma}_{0}^{-1} - \bm{P}_{0}, \label{HmatrixRiccatiICFinal}\\
\!\!\bm{H}_{1} &\stackrel{\eqref{TransveersalityCondition}, \eqref{defH}}{=} \left(\bm{P}_{1} + \bm{\Sigma}_{d}\right)^{-1} - \bm{P}_{1}, \label{HmatrixRiccatiTerminalCondFinal}
\end{align}
\label{CoupledSystemFinal}    
\end{subequations}
where $\bm{P}_{0}:=\bm{P}_{t=t_0}^{\mathrm{opt}}$, $\bm{P}_{1}:=\bm{P}_{t=t_1}^{\mathrm{opt}}$.

Since $\bm{\Sigma}_{0},\bm{\Sigma}_{d}\succ\bm{0}$ are given, and the solution maps for the Riccati ODEs \eqref{CovarianceCostateODEFinal}-\eqref{HmatrixRiccatiODEFinal} can be represented \cite[p. 156]{brockett2015finite}, \cite{shayman1986phase} as linear fractional transforms (LFTs), it is natural to speculate if \eqref{CoupledSystemFinal} can be solved via a recursive algorithm, thereby solving \eqref{FOOC}. In the following, we design such an algorithm. To do so, we need to construct mappings $$\bm{P}_{1}\mapsto\bm{P}_{0},\quad\bm{H}_{0}\mapsto\bm{H}_{1},\quad \bm{H}_{1}\mapsto\bm{P}_{1},$$
where the subscript zero corresponds to $t=t_0$, and the subscript one corresponds to $t=t_1$. These mappings are to be constructed in a way that ensures invertibility of $\bm{P}_{1}+\bm{\Sigma}_d$ in \eqref{HmatrixRiccatiTerminalCondFinal}; see Proposition \ref{prop:inertibilityofAumOfP1plusAndSigmad}.

In the next Sec. \ref{sec:Mappings}, we propose the constructions for these maps and detail their properties. The results from Sec. \ref{sec:Mappings} will be used for the design and analysis of the proposed algorithm in Sec. \ref{sec:RecrsiveAlgorithm}.

\begin{figure*}[th]
\centering
\includegraphics[width=0.75\linewidth]{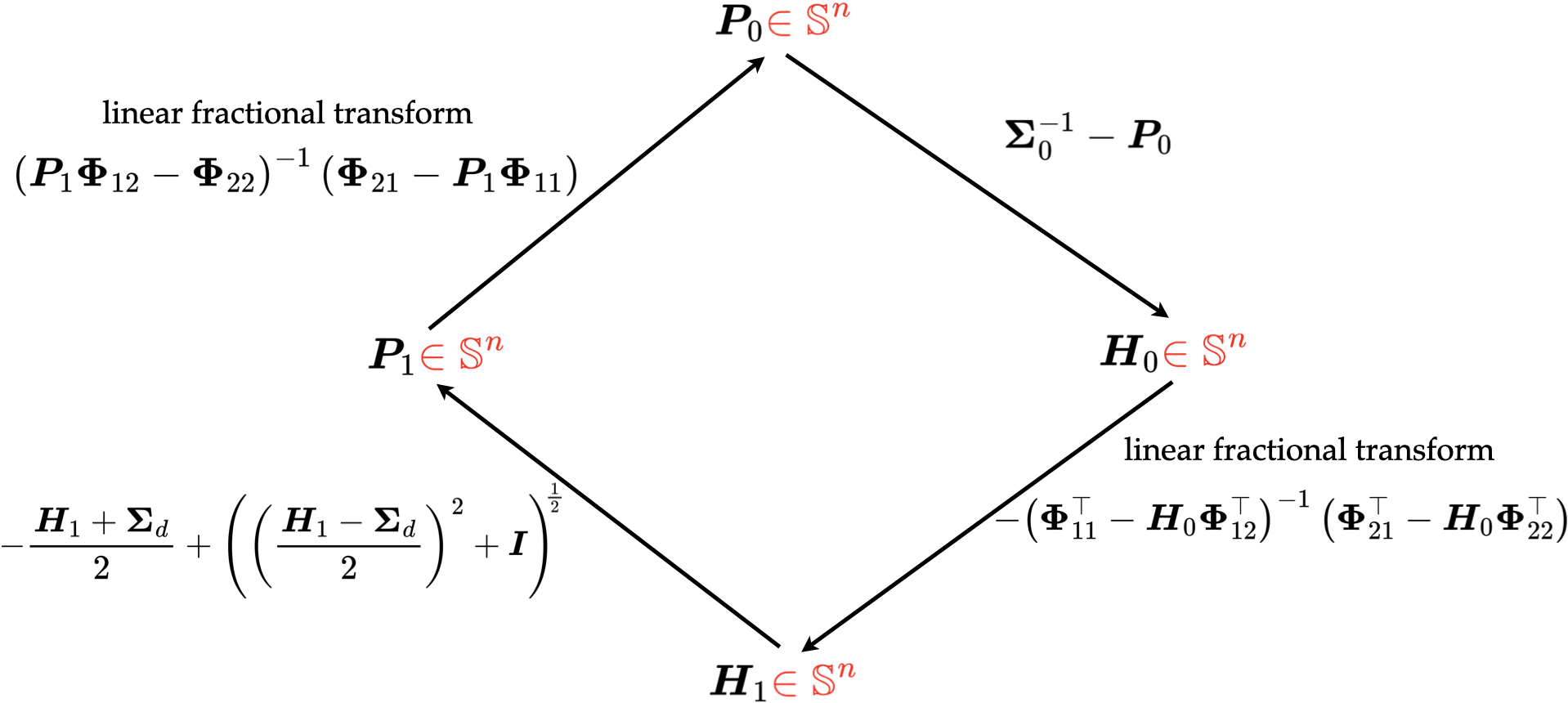}
\caption{The proposed fixed point recursion $\bm{P}_{0}\mapsto \left(\bm{P}_{0}\right)_{\text{next}}$ as a composition of four mappings.}
\label{fig:FixedPointRecursion}
\end{figure*}

\section{Construction of Matrix-valued Mappings}\label{sec:Mappings}

\subsection{Mappings \texorpdfstring{$\bm{P}_{1}\mapsto\bm{P}_{0},\bm{H}_{0}\mapsto\bm{H}_{1}$}{TEXT}}\label{subsec:P1mapstoP0andH0mapstoH1}
Let us denote the $2n\times 2n$ Hamiltonian matrix in \eqref{MeanCostateODE} as
\begin{align}
\bm{M}_{t} := \left[\begin{array}{cc}
\bm{A}_t & -\bm{B}_{t}\bm{B}_{t}^{\top} \\
-\bm{Q}_{t} & -\bm{A}_{t}^{\top}
\end{array}\right],
\label{DefHamiltonianMatrix}    
\end{align}
and its state transition matrix in block-partitioned form:
\begin{align}
\bm{\Phi}(s,t):= \left[\begin{array}{cc}
\bm{\Phi}_{11}(s,t) & \bm{\Phi}_{12}(s,t) \\
\bm{\Phi}_{21}(s,t) & \bm{\Phi}_{22}(s,t)
\end{array}\right], \quad t_{0}\leq s \leq t \leq t_{1},
\label{defSTMofM}    
\end{align}
where $\bm{\Phi}_{ij}(s,t)\in\mathbb{R}^{n\times n}$ $\forall (i,j)\in\{1,2\}^2$. By definition, 
\begin{align}
\partial_{t} \bm{\Phi}(s,t) = \bm{M}(t)\bm{\Phi}(s,t), \quad \bm{\Phi}(s,s)=\bm{I}.
\label{defSTM}    
\end{align} 

For convenience, consider slightly abused notation:
\begin{align}
\left[\begin{array}{cc}
\bm{\Phi}_{11} & \bm{\Phi}_{12}\\
\bm{\Phi}_{21} & \bm{\Phi}_{22}
\end{array}\right] := \left[\begin{array}{cc}
\bm{\Phi}_{11}(t_0,t_1) & \bm{\Phi}_{12}(t_0,t_1) \\
\bm{\Phi}_{21}(t_0,t_1) & \bm{\Phi}_{22}(t_0,t_1)
\end{array}\right].
\label{STMBlockFormAtBoundary} 
\end{align}
The following result from \cite{ChenGeorgiouPavonPartIII} will be useful.
\begin{lemma}\cite[Lemma 3]{ChenGeorgiouPavonPartIII}\label{Lemma:STMblockproperties}
The blocks of \eqref{STMBlockFormAtBoundary} satisfy the following identities:
\begin{subequations}
\begin{align}
& \bm{\Phi}_{11}^{\!\top} \bm{\Phi}_{22}-\bm{\Phi}_{21}^{\!\top}\bm{\Phi}_{12} =\bm{I}, \label{identity1}\\
& \bm{\Phi}_{12}^{\!\top}\bm{\Phi}_{22}-\bm{\Phi}_{22}^{\!\top}\bm{\Phi}_{12}=\bm{0}, \label{identity2}\\
& \bm{\Phi}_{21}^{\!\top}\bm{\Phi}_{11}-\bm{\Phi}_{11}^{\!\top}\bm{\Phi}_{21}=\bm{0},\label{identity3}\\
& \bm{\Phi}_{11} \bm{\Phi}_{22}^{\!\top}-\bm{\Phi}_{12} \bm{\Phi}_{21}^{\!\top}=\bm{I}, \label{identity4}\\
& \bm{\Phi}_{12}\bm{\Phi}_{11}^{\!\top}-\bm{\Phi}_{11}\bm{\Phi}_{12}^{\!\top}=\bm{0}, \label{identity5}\\
& \bm{\Phi}_{21}\bm{\Phi}_{22}^{\!\top}-\bm{\Phi}_{22}\bm{\Phi}_{21}^{\!\top}=\bm{0}.\label{identity6}
\end{align}
\label{STMblockidentities}
\end{subequations}
Furthermore, $\bm{\Phi}_{11}$ and $\bm{\Phi}_{12}$ are invertible.
\end{lemma}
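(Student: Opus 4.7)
The plan is to exploit the fact that $\bm{M}_t$ in \eqref{DefHamiltonianMatrix} is Hamiltonian with respect to the canonical symplectic form $\bm{J}:=\begin{pmatrix}\bm{0} & \bm{I}_n \\ -\bm{I}_n & \bm{0}\end{pmatrix}$. Because both $\bm{Q}_t$ and $\bm{B}_t\bm{B}_t^{\top}$ lie in $\mathbb{S}^{n}$, a direct block computation gives $\bm{J}\bm{M}_t + \bm{M}_t^{\top}\bm{J} = \bm{0}$. I would first differentiate $\bm{\Phi}(s,t)^{\top}\bm{J}\bm{\Phi}(s,t)$ in $t$ and substitute $\partial_t\bm{\Phi} = \bm{M}_t\bm{\Phi}$ to see that this $t$-derivative vanishes; evaluating at $t=s$ together with $\bm{\Phi}(s,s)=\bm{I}$ yields the symplectic identity $\bm{\Phi}(s,t)^{\top}\bm{J}\bm{\Phi}(s,t) = \bm{J}$. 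Conjugating this by $\bm{\Phi}^{-1}$ (which exists since $\bm{\Phi}$ is a state transition matrix) and transposing produces the companion identity $\bm{\Phi}(s,t)\bm{J}\bm{\Phi}(s,t)^{\top} = \bm{J}$.

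Specializing both identities to $s=t_0$, $t=t_1$ and expanding block-wise per \eqref{STMBlockFormAtBoundary} is then routine algebra: the three independent blocks of $\bm{\Phi}^{\top}\bm{J}\bm{\Phi} = \bm{J}$ reproduce \eqref{identity1}--\eqref{identity3}, while those of $\bm{\Phi}\bm{J}\bm{\Phi}^{\top} = \bm{J}$ reproduce \eqref{identity4}--\eqref{identity6}.

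For the invertibility claims I would use an energy estimate along the associated Hamiltonian flow $(\bm{x}_t,\bm{z}_t)$ generated by $\bm{M}_t$. Differentiating the bilinear quantity $\bm{x}_t^{\top}\bm{z}_t$ along the flow and using the block structure of $\bm{M}_t$ gives $\frac{\differential}{\differential t}(\bm{x}_t^{\top}\bm{z}_t) = -\bm{x}_t^{\top}\bm{Q}_t\bm{x}_t - \|\bm{B}_t^{\top}\bm{z}_t\|_2^2$. To handle $\bm{\Phi}_{11}$, I would pick $\bm{z}_0=\bm{0}$ and a hypothetical null vector $\bm{x}_0$ with $\bm{\Phi}_{11}\bm{x}_0=\bm{0}$, so that $\bm{x}_1 = \bm{0}$; integrating from $t_0$ to $t_1$ and invoking Assumption \textbf{A2} forces $\bm{B}_t^{\top}\bm{z}_t\equiv\bm{0}$, hence $\dot{\bm{x}}_t = \bm{A}_t\bm{x}_t$, and invertibility of the state transition matrix of $\bm{A}_t$ forces $\bm{x}_0=\bm{0}$. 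For $\bm{\Phi}_{12}$, I would instead set $\bm{x}_0=\bm{0}$, $\bm{z}_0=\bm{v}$ with $\bm{\Phi}_{12}\bm{v}=\bm{0}$; the same integration yields $\bm{x}_t\equiv\bm{0}$ and $\bm{B}_t^{\top}\bm{z}_t\equiv\bm{0}$, so $\bm{z}_t$ solves $\dot{\bm{z}}_t = -\bm{A}_t^{\top}\bm{z}_t$ with vanishing output $\bm{B}_t^{\top}\bm{z}_t$, and the observability form of the uniform controllability hypothesis \textbf{A1} applied to the dual pair $(-\bm{A}_t^{\top},\bm{B}_t^{\top})$ forces $\bm{v}=\bm{0}$.

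The main obstacle is conceptual rather than computational: keeping straight which invertibility claim consumes which hypothesis. Invertibility of $\bm{\Phi}_{11}$ only uses $\bm{Q}_t\succeq\bm{0}$ together with the non-singularity of the drift STM, whereas $\bm{\Phi}_{12}$ is precisely where the uniform controllability assumption \textbf{A1} genuinely enters, through its dual observability form. Once this bookkeeping is made explicit, the rest of the proof reduces to the symplectic algebra of the first two paragraphs.
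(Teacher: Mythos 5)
Your proof is correct. Note that the paper does not actually prove this lemma --- it imports it verbatim from \cite[Lemma 3]{ChenGeorgiouPavonPartIII} --- so there is no in-paper argument to compare against; what you have written is a complete, self-contained derivation. The two ingredients are both sound: (i) $\bm{J}\bm{M}_t+\bm{M}_t^{\top}\bm{J}=\bm{0}$ holds because $\bm{Q}_t$ and $\bm{B}_t\bm{B}_t^{\top}$ are symmetric, so $\bm{\Phi}(s,t)$ is symplectic, and the block expansions of $\bm{\Phi}^{\top}\bm{J}\bm{\Phi}=\bm{J}$ and $\bm{\Phi}\bm{J}\bm{\Phi}^{\top}=\bm{J}$ give exactly \eqref{identity1}--\eqref{identity3} and \eqref{identity4}--\eqref{identity6} (the $(2,1)$ blocks being the negative transposes of the $(1,2)$ blocks, as you note); (ii) the dissipation identity $\frac{\differential}{\differential t}(\bm{x}_t^{\top}\bm{z}_t)=-\bm{x}_t^{\top}\bm{Q}_t\bm{x}_t-\|\bm{B}_t^{\top}\bm{z}_t\|_2^2$ checks out, and integrating it with the two choices of boundary data correctly isolates the hypotheses: $\bm{Q}_t\succeq\bm{0}$ alone kills a null vector of $\bm{\Phi}_{11}$, while a null vector of $\bm{\Phi}_{12}$ produces an adjoint trajectory $\dot{\bm{z}}_t=-\bm{A}_t^{\top}\bm{z}_t$ with $\bm{B}_t^{\top}\bm{z}_t\equiv\bm{0}$, which the positive-definite controllability Gramian of \textbf{A1} (via $\bm{z}_1^{\top}W(t_0,t_1)\bm{z}_1=\int_{t_0}^{t_1}\|\bm{B}_\tau^{\top}\bm{z}_\tau\|_2^2\,\differential\tau$) forces to vanish. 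The only cosmetic remark is that uniform controllability is more than you need here --- controllability over the single interval $[t_0,t_1]$ already suffices for $\bm{\Phi}_{12}$ --- and your explicit bookkeeping of which invertibility claim consumes which assumption is a genuine value-add over the bare citation in the paper.
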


The mapping $\bm{P}_{1}\mapsto\bm{P}_{0}$ is the solution of the Riccati matrix ODE initial value problem \eqref{CovarianceCostateODEFinal} backward in time from $t=t_1$ to $t=t_0$, with  $\bm{P}_{t}(t=t_1)=\bm{P}_{1}\in\mathbb{S}^{n}$ given. As is well-known \cite[p. 156]{brockett2015finite}, \cite{shayman1986phase}, this mapping is an LFT in terms of the blocks of \eqref{STMBlockFormAtBoundary}:
\begin{align}
\bm{P}_{0} = \left(\bm{P}_{1}\bm{\Phi}_{12} - \bm{\Phi}_{22}\right)^{-1}\left(\bm{\Phi}_{21}-\bm{P}_{1}\bm{\Phi}_{11}\right).
\label{P1toP0asLFT}    
\end{align}
Thus, \eqref{P1toP0asLFT} is the explicit form for the mapping $\bm{P}_{1}\mapsto\bm{P}_{0}$.

Likewise, the mapping $\bm{H}_{0}\mapsto\bm{H}_{1}$ is the solution of the Riccati matrix ODE initial value problem \eqref{HmatrixRiccatiODEFinal} forward in time from $t=t_0$ to $t=t_1$, with  $\bm{H}_{t}(t=t_0)=\bm{H}_{0}\in\mathbb{S}^{n}$ given. The counterpart of \eqref{P1toP0asLFT} is
\begin{align}
\bm{H}_{1} = -\left(\bm{\Phi}_{11}^{\top}-\bm{H}_{0}\bm{\Phi}_{12}^{\top}\right)^{-1}\left(\bm{\Phi}_{21}^{\top}-\bm{H}_{0}\bm{\Phi}_{22}^{\top}\right).
\label{H0toH1asLFT}    
\end{align}
Thus, \eqref{H0toH1asLFT} is the explicit form for the mapping $\bm{H}_{0}\mapsto\bm{H}_{1}$.

\begin{theorem}\label{Thm:LFTproperties}
The mappings $\bm{P}_{1}\mapsto\bm{P}_{0}$, $\bm{H}_{0}\mapsto\bm{H}_{1}$ given by \eqref{P1toP0asLFT} and \eqref{H0toH1asLFT} respectively, are bijections. Furthermore, they take symmetric matrices to symmetric matrices.    
\end{theorem}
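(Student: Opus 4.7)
Both claims can be handled by direct algebraic manipulation of the LFT formulas \eqref{P1toP0asLFT} and \eqref{H0toH1asLFT}, with Lemma \ref{Lemma:STMblockproperties} supplying the identities that close the argument. The intuition is that these LFTs are the explicit solution maps, on their appropriate domains, of the Riccati matrix ODEs \eqref{CovarianceCostateODEFinal}, \eqref{HmatrixRiccatiODEFinal}; each such ODE has a right-hand side that maps $\mathbb{S}^{n}$ to itself and generates an invertible flow (since the underlying Hamiltonian $\bm{M}_{t}$ does), which morally delivers both symmetry preservation and bijectivity. I would nevertheless give self-contained algebraic arguments so that the analysis in Sec. \ref{sec:RecrsiveAlgorithm} does not need to detour through an ODE solver.

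For bijectivity of $\bm{P}_{1}\mapsto\bm{P}_{0}$, I would explicitly invert \eqref{P1toP0asLFT} by rearranging the defining relation $(\bm{P}_{1}\bm{\Phi}_{12}-\bm{\Phi}_{22})\bm{P}_{0}=\bm{\Phi}_{21}-\bm{P}_{1}\bm{\Phi}_{11}$ and collecting the $\bm{P}_{1}$-terms, which yields the forward LFT $\bm{P}_{1}=(\bm{\Phi}_{21}+\bm{\Phi}_{22}\bm{P}_{0})(\bm{\Phi}_{11}+\bm{\Phi}_{12}\bm{P}_{0})^{-1}$ as the inverse map; an entirely parallel rearrangement inverts \eqref{H0toH1asLFT}. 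Pointwise well-posedness of these inverses (i.e., invertibility of the relevant matrix pencils) would be traced back to Lemma \ref{Lemma:STMblockproperties}, which already asserts nonsingularity of $\bm{\Phi}_{11}$ and $\bm{\Phi}_{12}$.

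For symmetry preservation of \eqref{P1toP0asLFT}, write $\bm{P}_{0}=\bm{L}^{-1}\bm{R}$ with $\bm{L}:=\bm{P}_{1}\bm{\Phi}_{12}-\bm{\Phi}_{22}$ and $\bm{R}:=\bm{\Phi}_{21}-\bm{P}_{1}\bm{\Phi}_{11}$, and reduce the claim to the matrix identity $\bm{R}\bm{L}^{\top}=\bm{L}\bm{R}^{\top}$. Expanding both sides (using $\bm{P}_{1}=\bm{P}_{1}^{\top}$) and sorting terms by degree in $\bm{P}_{1}$, the degree-two piece vanishes by \eqref{identity5}, the degree-zero piece by \eqref{identity6}, and the degree-one piece vanishes after substituting the direct and transposed form of \eqref{identity4}, i.e., $\bm{\Phi}_{11}\bm{\Phi}_{22}^{\top}=\bm{I}+\bm{\Phi}_{12}\bm{\Phi}_{21}^{\top}$ and $\bm{\Phi}_{22}\bm{\Phi}_{11}^{\top}=\bm{I}+\bm{\Phi}_{21}\bm{\Phi}_{12}^{\top}$. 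The $\bm{H}_{0}\mapsto\bm{H}_{1}$ case is structurally identical: it uses the ``transposed'' identities \eqref{identity1}, \eqref{identity2}, \eqref{identity3} in place of \eqref{identity4}, \eqref{identity5}, \eqref{identity6}, reflecting that \eqref{H0toH1asLFT} is written in terms of $\bm{\Phi}_{ij}^{\top}$ rather than $\bm{\Phi}_{ij}$.

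The main obstacle I anticipate is purely bookkeeping: in the symmetry calculation, several cross-products must be grouped correctly and matched against the right identity in Lemma \ref{Lemma:STMblockproperties}, and a stray sign or misplaced transpose can easily make the cancellation appear to fail. The six identities in Lemma \ref{Lemma:STMblockproperties} are precisely tailored to the pattern that arises, so once the expansion is organized by degree in $\bm{P}_{1}$ (respectively $\bm{H}_{0}$), the three residual pieces close term-by-term without any further input.
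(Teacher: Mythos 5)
Your argument is correct in substance but takes a more computational route than the paper. For symmetry preservation, the paper invokes Potapov's symplectic collinearity criterion (an LFT preserves symmetry iff its $2n\times 2n$ coefficient matrix $\bm{C}$ satisfies $\bm{C}^{\top}\bm{J}\bm{C}=r\bm{J}$) and verifies that single identity using \eqref{identity4}--\eqref{identity6}. You instead reduce symmetry of $\bm{L}^{-1}\bm{R}$ to $\bm{R}\bm{L}^{\top}=\bm{L}\bm{R}^{\top}$ and cancel term by term; I checked the expansion, and the degree-two, degree-zero, and degree-one pieces do vanish via \eqref{identity5}, \eqref{identity6}, and \eqref{identity4} (plus its transpose) for the $\bm{P}$-map, and via \eqref{identity2}, \eqref{identity3}, \eqref{identity1} for the $\bm{H}$-map, exactly as you predict. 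This is equivalent in content to the paper's check (the collinearity condition is precisely the statement that the three homogeneous pieces vanish) but is self-contained and avoids citing Potapov's Theorem 4. For bijectivity, the paper again leans on Potapov --- the LFT is a bijection on its domain of definition iff the full coefficient matrix is nonsingular, which holds because that matrix is \eqref{STMBlockFormAtBoundary} up to sign flips --- whereas you exhibit the explicit inverse $\bm{P}_{1}=(\bm{\Phi}_{21}+\bm{\Phi}_{22}\bm{P}_{0})(\bm{\Phi}_{11}+\bm{\Phi}_{12}\bm{P}_{0})^{-1}$, which is the forward Riccati flow; that is a legitimate and arguably more informative route.

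The one step that does not hold as written is your claim that well-posedness of this inverse ``traces back to'' Lemma \ref{Lemma:STMblockproperties} asserting nonsingularity of $\bm{\Phi}_{11}$ and $\bm{\Phi}_{12}$. Invertibility of the two blocks separately does not imply invertibility of the pencil $\bm{\Phi}_{11}+\bm{\Phi}_{12}\bm{P}_{0}$: take $\bm{\Phi}_{11}=\bm{\Phi}_{12}=\bm{I}$ and $\bm{P}_{0}=-\bm{I}$. What you actually need is that $\bm{\Phi}_{11}+\bm{\Phi}_{12}\bm{P}_{0}$ is invertible for every $\bm{P}_{0}$ in the range of the forward LFT, and this follows from nonsingularity of the \emph{full} block matrix \eqref{STMBlockFormAtBoundary}, not of its blocks: the defining relation says $\left[\begin{smallmatrix}-\bm{P}_{1}, & \bm{I}\end{smallmatrix}\right]\bm{\Phi}\left[\begin{smallmatrix}\bm{I}\\ \bm{P}_{0}\end{smallmatrix}\right]=\bm{0}$, so the rank-$n$ column space of $\bm{\Phi}\left[\begin{smallmatrix}\bm{I}\\ \bm{P}_{0}\end{smallmatrix}\right]$ coincides with that of $\left[\begin{smallmatrix}\bm{I}\\ \bm{P}_{1}\end{smallmatrix}\right]$, forcing the top block $\bm{\Phi}_{11}+\bm{\Phi}_{12}\bm{P}_{0}$ to be an invertible change of frame. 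Replace your block-wise justification with this (or with the paper's appeal to nonsingularity of the whole coefficient matrix) and the bijectivity argument closes.
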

\begin{proof}
\textbf{Bijections.} An LFT 
\begin{align}
x\mapsto (ax + b)(cx + d)^{-1},
\label{StandardLFT}
\end{align}
where $a,b,c,d$ are constant $n\times n$ matrices and $x$ is a variable $n\times n$ matrix, is bijective in its domain of definition if and only if \cite[p. 22]{potapov1988linear} the coefficient matrix 
$\begin{pmatrix}
a & b\\
c & d
\end{pmatrix}$
is nonsingular. The LFT \eqref{P1toP0asLFT} is a transposition of \eqref{StandardLFT} with coefficient matrix
\begin{align}
&\begin{bmatrix}
-\bm{\Phi}_{11}^{\top} & \bm{\Phi}_{21}^{\top}\\
\bm{\Phi}_{12}^{\top} & -\bm{\Phi}_{22}^{\top}
\end{bmatrix} =\begin{bmatrix}
-\bm{\Phi}_{11} & \bm{\Phi}_{12}\\
\bm{\Phi}_{21} & -\bm{\Phi}_{22}
\end{bmatrix}^{\top} \nonumber\\ =&\left(\begin{bmatrix}
-\bm{I} & \bm{0}\\
\bm{0} & \bm{I}
\end{bmatrix}\begin{bmatrix}
\bm{\Phi}_{11} & \bm{\Phi}_{12}\\
\bm{\Phi}_{21} & \bm{\Phi}_{22}
\end{bmatrix}\begin{bmatrix}
\bm{I} & \bm{0}\\
\bm{0} & -\bm{I}
\end{bmatrix}\right)^{\!\!\top},
\label{CoeffMatrixOfTransposedLFTP1}
\end{align}
which is nonsingular because the state transition matrix \eqref{STMBlockFormAtBoundary} is nonsingular. This proves that the LFT \eqref{P1toP0asLFT} is bijective.

Likewise, the LFT \eqref{H0toH1asLFT} is a transposition of the LFT \eqref{StandardLFT} with coefficient matrix
\begin{align}
&\begin{bmatrix}
-\bm{\Phi}_{22} & \bm{\Phi}_{21}\\
\bm{\Phi}_{12} & -\bm{\Phi}_{11}
\end{bmatrix} \!=\! \begin{bmatrix}
-\bm{I} & \bm{0}\\
\bm{0} & \bm{I}
\end{bmatrix}\!\begin{bmatrix}
\bm{\Phi}_{22} & \bm{\Phi}_{21}\\
\bm{\Phi}_{12} & \bm{\Phi}_{11}
\end{bmatrix}\!\begin{bmatrix}
\bm{I} & \bm{0}\\
\bm{0} & -\bm{I}
\end{bmatrix}\nonumber\\
=& \!\begin{bmatrix}
-\bm{I} & \bm{0}\\
\bm{0} & \bm{I}
\end{bmatrix}\!\begin{bmatrix}
\bm{0} & \bm{I}\\
\bm{I} & \bm{0}
\end{bmatrix}\!\begin{bmatrix}
\bm{\Phi}_{11} & \bm{\Phi}_{12}\\
\bm{\Phi}_{21} & \bm{\Phi}_{22}
\end{bmatrix}\!\begin{bmatrix}
\bm{0} & \bm{I}\\
\bm{I} & \bm{0}
\end{bmatrix}\!\begin{bmatrix}
\bm{I} & \bm{0}\\
\bm{0} & -\bm{I}
\end{bmatrix},
\label{CoeffMatrixOfTransposedLFTH0}
\end{align}
which is nonsingular because the state transition matrix \eqref{STMBlockFormAtBoundary} is nonsingular. Thus the LFT \eqref{H0toH1asLFT} is bijective.

\noindent\textbf{Symmetric to symmetric.} Let us define the $2n\times 2n$ skew-symmetric matrix $\bm{J} := \begin{bmatrix}
\bm{0} & \bm{I}\\
-\bm{I} & \bm{0}
\end{bmatrix}$. An LFT of the form \eqref{StandardLFT} takes symmetric matrices to symmetric matrices if and only if the following symplectic collinearity condition is satisfied \cite[Thm. 4]{potapov1988linear}: 
\begin{align}
\!\!\begin{pmatrix}
a & b\\
c & d
\end{pmatrix}^{\!\!\top}\!\!\bm{J}\begin{pmatrix}
a & b\\
c & d
\end{pmatrix}=r\bm{J}\quad\text{for some}\; r>0.
\label{collinearitycondition}    
\end{align}
Since the LFT \eqref{P1toP0asLFT} is a transposition of \eqref{StandardLFT} with coefficient matrix \eqref{CoeffMatrixOfTransposedLFTP1}, it suffices to demonstrate \eqref{collinearitycondition} for the coefficient matrix \eqref{CoeffMatrixOfTransposedLFTP1}. Direct computation verifies
\begin{align}
&\begin{bmatrix}
-\bm{\Phi}_{11} & \bm{\Phi}_{12}\\
\bm{\Phi}_{21} & -\bm{\Phi}_{22}
\end{bmatrix}^{\top}\!\!\bm{J}\begin{bmatrix}
-\bm{\Phi}_{11}^{\top} & \bm{\Phi}_{21}^{\top}\\
\bm{\Phi}_{12}^{\top} & -\bm{\Phi}_{22}^{\top}
\end{bmatrix} \nonumber\\
=&\begin{bmatrix}
\bm{\Phi}_{12}\bm{\Phi}_{11}^{\top} - \bm{\Phi}_{11}\bm{\Phi}_{12}^{\top} & -\bm{\Phi}_{12}\bm{\Phi}_{21}^{\top}+\bm{\Phi}_{11}\bm{\Phi}_{22}^{\top}\\
-\bm{\Phi}_{22}\bm{\Phi}_{11}^{\top} + \bm{\Phi}_{21}\bm{\Phi}_{12}^{\top} & \bm{\Phi}_{22}\bm{\Phi}_{21}^{\top} - \bm{\Phi}_{21}\bm{\Phi}_{22}^{\top}
\end{bmatrix}\nonumber\\
=&\begin{bmatrix}
\bm{0} & \bm{I}\\
-\bm{I} & \bm{0}
\end{bmatrix} = \bm{J},
\end{align}
where the last but one equality used \eqref{identity4}-\eqref{identity6} from Lemma \ref{Lemma:STMblockproperties}. Therefore, the condition \eqref{collinearitycondition} is satisfied with $r=1$, and the LFT \eqref{P1toP0asLFT} takes symmetric matrices to symmetric matrices. 

Similarly, the LFT \eqref{H0toH1asLFT} being a transposition of the LFT \eqref{StandardLFT} with coefficient matrix \eqref{CoeffMatrixOfTransposedLFTH0}, it suffices to demonstrate \eqref{collinearitycondition} for the coefficient matrix \eqref{CoeffMatrixOfTransposedLFTH0}. Again direct computation together with the identities \eqref{identity4}-\eqref{identity6} from Lemma \ref{Lemma:STMblockproperties} verify the same.
\end{proof}


\subsection{Mapping \texorpdfstring{$\bm{H}_1 \mapsto \bm{P}_1$}{TEXT}}\label{subsec:H1mapstoP1}
We start by noticing that \eqref{HmatrixRiccatiTerminalCondFinal}, or equivalently \eqref{TransveersalityCondition} and \eqref{defH} can be recast in the following two ways:
\begin{align}
\bm{I} &= \bm{\Sigma}_{1}\left(\bm{P}_1 + \bm{H}_1\right) = \left(\bm{P}_1 + \bm{\Sigma}_d\right)\left(\bm{P}_1 + \bm{H}_1\right),\label{FirstWay}\\
\bm{I} &= \left(\bm{P}_1 + \bm{H}_1\right)\bm{\Sigma}_1 = \left(\bm{P}_1 + \bm{H}_1\right)\left(\bm{P}_1 + \bm{\Sigma}_d\right), \label{SecondWay}
\end{align}
wherein $\bm{I}$ denotes the identity matrix.

The equations \eqref{FirstWay}-\eqref{SecondWay} can be equivalently written as
\begin{align}
\bm{P}_1^2 + \bm{P}_{1}\bm{H}_{1} + \bm{\Sigma}_d \bm{P}_{1} + \left(\bm{\Sigma}_d \bm{H}_1 - \bm{I}\right) &= \bm{0}, \label{FirstWayReduced}\\
\bm{P}_1^2 + \bm{P}_{1}\bm{\Sigma}_d +  \bm{H}_{1}\bm{P}_{1} + \left(\bm{H}_1 \bm{\Sigma}_d - \bm{I}\right) &= \bm{0}. \label{SecondWayReduced}
\end{align}
The structures of \eqref{FirstWayReduced} and \eqref{SecondWayReduced} reveal the following: if either \eqref{FirstWayReduced} or \eqref{SecondWayReduced} admits a \emph{symmetric} solution $\bm{P}_{1}$, that solution must satisfy both \eqref{FirstWayReduced} and \eqref{SecondWayReduced}. So in effect, we have one algebraic equation in one symmetric unknown $\bm{P}_1$.

Now the question becomes whether \eqref{FirstWayReduced} or \eqref{SecondWayReduced}, or some equivalent version of either, is more convenient for the mapping $\bm{H}_{1}\mapsto\bm{P}_{1}$. One particularly simple equivalent version is obtained by subtracting \eqref{SecondWayReduced} from \eqref{FirstWayReduced}, giving the Sylvester equation 
    \begin{align}
        (\bm{H}_1 - \bm{\Sigma}_d)\bm{P}_1 + \bm{P}_1(\bm{\Sigma}_d - \bm{H}_1) = \bm{\Sigma_}d\bm{H}_1 - \bm{H}_1\bm{\Sigma}_d.
    \label{SylvesterEq}    
    \end{align}
We deduce the following properties of its solution.
\begin{theorem}\label{thm:SylvesterSolution}
Given $\bm{H}_{1}$ symmetric and $\bm{\Sigma}_d \succ \bm{0}$, there exists non-unique $\bm{P}_{1}=\bm{P}_{1}^{\top}$ solving the Sylvester equation \eqref{SylvesterEq}.    
\end{theorem}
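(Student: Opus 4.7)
The plan is to recognize \eqref{SylvesterEq} as the matrix commutator equation
\begin{equation*}
\bm{A}\bm{P}_1 - \bm{P}_1\bm{A} = \bm{C},
\end{equation*}
with $\bm{A} := \bm{H}_1 - \bm{\Sigma}_d \in \mathbb{S}^n$ and $\bm{C} := \bm{\Sigma}_d \bm{H}_1 - \bm{H}_1 \bm{\Sigma}_d$ skew-symmetric. The standard uniqueness criterion for a Sylvester equation $\bm{A}\bm{X} + \bm{X}\bm{B} = \bm{C}$ requires $\bm{A}$ and $-\bm{B}$ to share no eigenvalue; here $-\bm{B} = \bm{A}$, so every eigenvalue is shared. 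The associated operator $\mathcal{L}: \bm{X}\mapsto \bm{A}\bm{X} - \bm{X}\bm{A}$ therefore has a nontrivial kernel, and this built-in degeneracy is precisely what will deliver non-uniqueness.

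For existence of a symmetric particular solution I would guess-and-verify. Substituting $\bm{P}_1 = -\bm{\Sigma}_d$ into the left-hand side of \eqref{SylvesterEq} yields $-\bm{H}_1\bm{\Sigma}_d + \bm{\Sigma}_d^2 - \bm{\Sigma}_d^2 + \bm{\Sigma}_d\bm{H}_1 = \bm{\Sigma}_d\bm{H}_1 - \bm{H}_1\bm{\Sigma}_d = \bm{C}$, so this choice works, and $-\bm{\Sigma}_d \in \mathbb{S}^n$ by assumption \textbf{A3}. (Symmetrically, $\bm{P}_1 = -\bm{H}_1$ is also a valid symmetric particular solution by the same calculation.)

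For non-uniqueness within $\mathbb{S}^n$, I would invoke the kernel structure of $\mathcal{L}$: $\ker\mathcal{L}$ equals the commutant of $\bm{A}$, which always contains the symmetric matrix $\bm{I}$. Consequently, the one-parameter family
\begin{equation*}
\bm{P}_1(\alpha) := -\bm{\Sigma}_d + \alpha \bm{I}, \qquad \alpha \in \mathbb{R},
\end{equation*}
consists of mutually distinct symmetric solutions of \eqref{SylvesterEq}. (More generally, one can add any real polynomial in $\bm{A}$, all of which are symmetric since $\bm{A}$ is, but the identity already suffices to establish the claim.)

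The main obstacle here is minimal: once the commutator structure and its inherent spectral degeneracy are identified, both the existence of a symmetric solution and its non-uniqueness reduce to direct verification. The more delicate question --- namely, which member of this symmetric family is also compatible with the upstream equations \eqref{FirstWayReduced} and \eqref{SecondWayReduced} that were sacrificed when \eqref{SylvesterEq} was obtained by subtraction --- is not claimed by Theorem~\ref{thm:SylvesterSolution} and will presumably be the subject of the algorithmic developments in Sec.~\ref{sec:RecrsiveAlgorithm}.
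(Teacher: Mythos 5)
Your proof is correct, and it takes a more constructive route than the paper's. The paper argues existence abstractly: it observes that both sides of \eqref{SylvesterEq} have zero trace (both sides are commutators) and that the solution set is closed under transposition, so a symmetric solution can be extracted by symmetrization; non-uniqueness is then deduced from the fact that the coefficient matrices $\bm{H}_1-\bm{\Sigma}_d$ and $\bm{\Sigma}_d-\bm{H}_1$ are negatives of each other, i.e., the Sylvester operator $\bm{X}\mapsto\bm{A}\bm{X}-\bm{X}\bm{A}$ is singular. You instead exhibit the explicit symmetric particular solution $\bm{P}_1=-\bm{\Sigma}_d$ (equivalently $-\bm{H}_1$) and manufacture non-uniqueness by adding elements of the commutant of $\bm{A}=\bm{H}_1-\bm{\Sigma}_d$, starting with $\alpha\bm{I}$; both the substitution check and the distinctness of the family $-\bm{\Sigma}_d+\alpha\bm{I}$ are correct. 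What your approach buys is a genuinely self-contained existence proof: the zero-trace condition the paper invokes is only \emph{necessary} for solvability of a commutator equation, so the paper's existence argument really rests on the explicit solutions that it defers to the remark following Theorem~\ref{thm:SylvesterSolution} --- precisely the ones you construct. What the paper's operator-theoretic framing buys is an explanation of \emph{why} non-uniqueness is structurally unavoidable (the spectra of $\bm{A}$ and $-(-\bm{A})$ coincide) and a bridge to the Taussky--Zassenhaus description of the full solution set as polynomials in $\bm{A}$, which subsumes your one-parameter family. Your closing observation --- that Theorem~\ref{thm:SylvesterSolution} does not claim compatibility with \eqref{FirstWayReduced}--\eqref{SecondWayReduced}, which were weakened by the subtraction producing \eqref{SylvesterEq} --- correctly identifies why the paper ultimately discards the Sylvester route in favor of the CARE \eqref{SpecificCARE}.
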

\begin{proof}
A symmetric solution to \eqref{SylvesterEq} exists because both sides of \eqref{SylvesterEq} have zero trace (necessary for existence), and if $\bm{P}_{1}$ solves \eqref{SylvesterEq} then so does $\bm{P}_{1}^{\top}$.    

However, \eqref{SylvesterEq} cannot admit unique solution because the coefficient matrices $\bm{H}_1-\bm{\Sigma}_d, \bm{\Sigma}_d-\bm{H}_1$ for the linear terms are negation of each other; see \cite{sylvester1884equation,bhatia1997and}.
\end{proof}
\begin{remark}
Particular symmetric solutions of \eqref{SylvesterEq} can be obtained by solving the system of linear equations $\left(\left(\bm{I}\otimes(\bm{H}_1 - \bm{\Sigma}_d)\right) - \left((\bm{H}_1 - \bm{\Sigma}_d)\otimes\bm{I}\right) \right){\mathrm{vec}}(\bm{P}_{1})={\mathrm{vec}}(\bm{\Sigma_}d\bm{H}_1 - \bm{H}_1\bm{\Sigma}_d)$. Two such solutions are $-\bm{H}_{1}$ and $-\bm{\Sigma}_{d}$, verified by direct substitution in \eqref{SylvesterEq}. In general, the dimension of the solution set depends on the number of distinct eigenvalues of $(\bm{H}_{1}-\bm{\Sigma}_{d})$. If all eigenvalues of $(\bm{H}_{1}-\bm{\Sigma}_{d})$ are distinct, then the solution set is of the form $\{-\bm{H}_1+ c_0 \bm{I} + c_1 (\bm{H}_{1}-\bm{\Sigma_{d}}) + \cdots + c_{n-1} (\bm{H}_{1}-\bm{\Sigma_{d}})^{n-1}\ | \ c_0, c_{1}, \hdots, c_{n-1}\in\mathbb{R}\}$, which includes $-\bm{\Sigma}_{d}$. This is a consequence of a result of Taussky and Zassenhaus \cite{taussky1959similarity}.
\end{remark}

For given $\bm{H}_{1}\in\mathbb{S}^{n},\bm{\Sigma}_{d}\succ\bm{0}$, it is unclear which of the non-unique symmetric solutions of \eqref{SylvesterEq} can be used for the mapping $\bm{H}_{1}\mapsto\bm{P}_{1}$, that can in turn guarantee convergence of the recursive algorithm. This analysis suggests that instead of removing the nonlinearity from \eqref{FirstWayReduced}-\eqref{SecondWayReduced}, we should try to leverage the same.

As an alternative to \eqref{SylvesterEq}, adding \eqref{FirstWayReduced} and \eqref{SecondWayReduced} gives an instance of Continuous-time Algebraic Riccati Equation (CARE): 
    \begin{align}
        \bm{P}_1^2 + \bm{P}_1\left(\frac{\bm{H}_1 + \bm{\Sigma}_d}{2} \right) + \left(\frac{\bm{H}_1 + \bm{\Sigma}_d}{2} \right)\bm{P}_1 \notag \\
        + \left(\frac{\bm{\Sigma}_d \bm{H}_1 + \bm{H}_1\bm{\Sigma}_d}{2} - \bm{I}\right) = \bm{0}.
    \label{SpecificCARE}    
    \end{align}
    The next result guarantees the existence and non-uniqueness for the symmetric solution $\bm{P}_{1}$ of \eqref{SpecificCARE}. It also finds the unique symmetric solution that is stabilizing.
\begin{theorem}\label{Thm:CARE}
Given $\bm{H}_{1}$ symmetric and $\bm{\Sigma}_d \succ \bm{0}$, there exists non-unique $\bm{P}_{1}=\bm{P}_{1}^{\top}$ solving the CARE \eqref{SpecificCARE}. Among these non-unique symmetric solutions, the only one that is stabilizing, is given by
\begin{align}
\bm{P}_{1} = -\frac{\bm{H}_{1}+\bm{\Sigma}_{d}}{2} + \left(\left(\frac{\bm{H}_{1}-\bm{\Sigma}_{d}}{2}\right)^{\!\!2} + \bm{I}\right)^{\frac{1}{2}}.
\label{P1explicit}    
\end{align}
\end{theorem}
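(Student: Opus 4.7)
The plan is to reduce the matrix quadratic \eqref{SpecificCARE} to a symmetric square-root problem via completing the square, which makes both the non-uniqueness claim and the identification of the stabilizing root transparent. Let $\bm{M}:=(\bm{H}_1+\bm{\Sigma}_d)/2$ and $\bm{N}:=(\bm{\Sigma}_d\bm{H}_1+\bm{H}_1\bm{\Sigma}_d)/2-\bm{I}$, both symmetric. The CARE becomes $\bm{P}_1^2+\bm{P}_1\bm{M}+\bm{M}\bm{P}_1+\bm{N}=\bm{0}$, equivalent to $(\bm{P}_1+\bm{M})^2=\bm{M}^2-\bm{N}$. First I would expand $\bm{M}^2$ and observe that the mixed terms $\tfrac{1}{4}(\bm{H}_1\bm{\Sigma}_d+\bm{\Sigma}_d\bm{H}_1)$ from $\bm{M}^2$ combine with $-\tfrac{1}{2}(\bm{\Sigma}_d\bm{H}_1+\bm{H}_1\bm{\Sigma}_d)$ from $-\bm{N}$ to give the clean form
\[
\bm{G}\;:=\;\bm{M}^2-\bm{N}\;=\;\left(\frac{\bm{H}_1-\bm{\Sigma}_d}{2}\right)^{\!2}+\bm{I}.
\]
Since $(\bm{H}_1-\bm{\Sigma}_d)/2$ is symmetric, its square is positive semidefinite, so $\bm{G}\succeq\bm{I}\succ\bm{0}$.

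For the non-uniqueness portion, any symmetric square root $\bm{S}$ of the positive definite matrix $\bm{G}$ yields a symmetric solution $\bm{P}_1=\bm{S}-\bm{M}$. Using the spectral decomposition $\bm{G}=\bm{U}\,\mathrm{diag}(\lambda_i)\bm{U}^{\top}$, every symmetric square root is of the form $\bm{S}=\bm{U}\,\mathrm{diag}(\epsilon_i\sqrt{\lambda_i})\bm{U}^{\top}$ with $\epsilon_i\in\{\pm 1\}$, giving generically $2^{n}$ distinct symmetric $\bm{P}_1$ (and a continuum when $\bm{G}$ has repeated eigenvalues).

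To isolate the stabilizing root, I would cast \eqref{SpecificCARE} in the standard CARE template $\tilde{\bm{A}}^{\!\top}\bm{X}+\bm{X}\tilde{\bm{A}}-\bm{X}\tilde{\bm{R}}\bm{X}+\tilde{\bm{Q}}=\bm{0}$ by identifying $\tilde{\bm{A}}:=-\bm{M}$, $\tilde{\bm{R}}:=\bm{I}$, $\tilde{\bm{Q}}:=-\bm{N}$, and $\bm{X}:=\bm{P}_1$. The associated closed-loop matrix is $\tilde{\bm{A}}-\tilde{\bm{R}}\bm{X}=-(\bm{M}+\bm{P}_1)$, which, for a symmetric argument, is Hurwitz if and only if $\bm{M}+\bm{P}_1\succ\bm{0}$. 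Exactly one of the symmetric square roots of $\bm{G}$ is positive definite, namely the principal square root $\bm{G}^{1/2}$ (corresponding to $\epsilon_i=+1$ for every $i$). Substituting $\bm{M}+\bm{P}_1=\bm{G}^{1/2}$ reproduces \eqref{P1explicit}.

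The main obstacle I anticipate is this last step: pinning down the CARE sign convention so that \emph{stabilizing} unambiguously selects the principal square root rather than its negative, and verifying uniqueness of the positive definite square root even when $\bm{G}$ has repeated eigenvalues (in which case the set of symmetric square roots becomes a continuum, yet only one member is $\succ\bm{0}$). The algebraic completion of the square and the positive definiteness of $\bm{G}$ are short computations; the delicate bookkeeping resides in the sign selection for the stabilizing root.
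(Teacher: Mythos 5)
Your proposal is correct, and it takes a genuinely more self-contained route than the paper. The paper proves the three claims with three separate tools: existence via a controllability-Gramian argument plus Lancaster--Rodman, non-uniqueness via a spectral/similarity analysis of the associated Hamiltonian matrix $\bm{N}$ (real spectrum $\{\pm\sigma_i\}$, hence the pure-imaginary uniqueness criterion fails), and uniqueness of the stabilizing solution via Willems' theorem, with completion of squares used only at the end to exhibit the two candidates $\bm{P}_1^{\pm}$ and check their closed-loop matrices. You instead observe that completing the square reduces the CARE \emph{exactly} to $(\bm{P}_1+\bm{M})^2=\bm{G}$ with $\bm{G}\succeq\bm{I}\succ\bm{0}$, so that symmetric solutions are in bijection with symmetric square roots of $\bm{G}$; this single observation simultaneously yields existence ($\bm{G}^{1/2}-\bm{M}$), non-uniqueness ($-\bm{G}^{1/2}-\bm{M}$ is a second solution), and the stabilizing one (the closed-loop matrix $-(\bm{M}+\bm{P}_1)$ is symmetric, hence Hurwitz iff $\bm{M}+\bm{P}_1\succ\bm{0}$, and a positive definite matrix has a unique positive \emph{semi}definite square root, namely the principal one --- a fact that holds regardless of eigenvalue multiplicities, which settles the worry you raised). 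Your approach buys elementarity and, as a bonus, a complete parameterization of the symmetric solution set; the paper's approach buys generality (its machinery extends to CAREs where $\bar{\bm{A}}$ is not symmetric and no completion of squares is available). Two small points of bookkeeping: your sign convention for the closed-loop matrix, $\tilde{\bm{A}}-\tilde{\bm{R}}\bm{X}=-(\bm{M}+\bm{P}_1)$, agrees with the paper's $-\bar{\bm{A}}-\bar{\bm{B}}\bar{\bm{B}}^{\top}\bm{P}_1$, so the principal root is indeed the stabilizing one under the convention actually in force; and the parameterization $\bm{U}\,\mathrm{diag}(\epsilon_i\sqrt{\lambda_i})\bm{U}^{\top}$ of symmetric square roots is exhaustive only for distinct eigenvalues, but your argument never needs exhaustiveness --- only the existence of at least two roots and the uniqueness of the positive definite one.
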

\begin{proof}
\textbf{Existence of symmetric solution.} We rewrite \eqref{SpecificCARE} in the standard CARE form:
\begin{align}
\bm{P}_{1}\bar{\bm{B}}\bm{P}_{1} + \bm{P}_{1}\bar{\bm{A}}+\bar{\bm{A}}^{\top}\bm{P}_{1} - \bar{\bm{C}} = \bm{0},
\label{StandardCARE}    
\end{align}
with $\bar{\bm{A}} = \frac{\bm{H}_1 + \bm{\Sigma}_d}{2}$, $\bar{\bm{B}} = \bm{I}$, $\bar{\bm{C}} =\bm{I}-\frac{1}{2}\left(\bm{\Sigma}_d \bm{H}_1 + \bm{H}_1\bm{\Sigma}_d\right)$.
The LTI pair
$$\left(\bar{\bm{A}},\bar{\bm{B}}\right) := \left(\frac{\bm{H}_1 + \bm{\Sigma}_d}{2},\bm{I}\right)$$
has the associated controllability Gramian
\begin{align}
\int_{s}^{t}e^{\bar{\bm{A}}\tau}\bar{\bm{B}}\bar{\bm{B}}^{\top}e^{\bar{\bm{A}}^{\top}\tau}\differential\tau = \int_{s}^{t}e^{\left(\bm{H}_1 + \bm{\Sigma}_{d}\right)\tau}\differential\tau
\label{CtrbGramian}    
\end{align}
for any $[s,t]\subseteq[t_0,t_1]$. Since $\left(\bm{H}_1 + \bm{\Sigma}_{d}\right)$ is symmetric, the matrix $e^{\left(\bm{H}_1 + \bm{\Sigma}_{d}\right)\tau}\succ\bm{0}$ for all $\tau\in[s,t]$. Therefore, the Gramian \eqref{CtrbGramian} is positive definite for any $[s,t]\subseteq[t_0,t_1]$. Hence the pair $(\bar{\bm{A}},\bar{\bm{B}})$ is controllable. 

It is known \cite[Theorem 1]{lancaster1980existence} that for $(\bar{\bm{A}},\bar{\bm{B}})$ controllable, $\bar{\bm{B}}\succeq 0$ and $\bar{\bm{C}}$ symmetric, the \emph{existence of symmetric solution} $\bm{P}_1=\bm{P}_1^{\top}$ for the CARE \eqref{StandardCARE} 
is guaranteed. Hence, there exists a symmetric solution for \eqref{SpecificCARE}.

\noindent\textbf{Non-uniqueness of the symmetric solution.} Recall that the symmetric solution $\bm{P}_{1}$ for \eqref{StandardCARE} is \emph{unique if and only if} \cite[Corollary 4]{lancaster1980existence}, \cite[Lemma 3.2.3]{kuvcera1991algebraic}
\begin{align}
\spec\left(\bm{N}:= \begin{bmatrix}
\bar{\bm{A}} & \bar{\bm{B}}\\
\bar{\bm{C}} & -\bar{\bm{A}}^{\top}
\end{bmatrix}\right)
\label{defM}    
\end{align}
is pure imaginary with no odd partial multiplicities. The partial multiplicities of an eigenvalue of a matrix are defined as the sizes of its corresponding Jordan blocks in the Jordan normal form of that matrix.

For the nonsingular $2n\times 2n$ matrix $\bm{J} := \begin{bmatrix}
\bm{0} & \bm{I}\\
-\bm{I} & \bm{0}
\end{bmatrix}$,
and $\bm{N}$ as in \eqref{defM}, we directly verify that
\begin{align}
\bm{N} = \bm{J} \bm{N}^{\top} \bm{J} &= \bm{J}\left(-\bm{N}^{\top}\right)\left(-\bm{J}\right) \nonumber\\
&= \left(-\bm{J}\right)^{-1}\left(-\bm{N}^{\top}\right)\left(-\bm{J}\right),
\label{MSimilarityTransform}
\end{align}
where the last equality uses $\left(-\bm{J}\right)^{-1} = \bm{J}$. By \eqref{MSimilarityTransform}, $\bm{N}$ is similar to $-\bm{N}^{\top}$. Therefore, $\spec(\bm{N})$ has reflective symmetry about the pure imaginary axis. Note that this uses the symmetries of $\bar{\bm{B}}, \bar{\bm{C}}$ but not that of $\bar{\bm{A}}$. However, our $\bar{\bm{A}} = \frac{\bm{H}_1 + \bm{\Sigma}_d}{2}$ is additionally symmetric, so the spectrum of our $\bm{N}$ must be real of the form $\{\pm\sigma_i\}_{i=1}^{n}$ where $\sigma_{i}\in\mathbb{R}\forall i\in[n]$. Thus, a symmetric $\bm{P}_1$ solving \eqref{SpecificCARE} exists but is non-unique.

\noindent\textbf{Unique stabilizing symmetric solution.} We showed earlier that $(\bar{\bm{A}},\bar{\bm{B}})$ is a controllable pair. By a classic result due to Willems \cite[Thm. 5]{1099831}, for $(\bar{\bm{A}},\bar{\bm{B}})$ controllable, there is a unique $\bm{P}_{1}=\bm{P}_{1}^{\top}$ that is stabilizing. 

On the other hand, completion-of-squares for \eqref{SpecificCARE} gives
\begin{align}
\left(\bm{P}_{1}+\frac{\bm{H}_1 + \bm{\Sigma}_d}{2}\right)^{\!\!2} = \left(\frac{\bm{H}_{1}-\bm{\Sigma}_{d}}{2}\right)^{\!\!2} + \bm{I}.
\label{CompletionOfSquares}
\end{align}
Since $(\bm{H}_{1}-\bm{\Sigma}_{d})/2$ is symmetric, its square is positive semidefinite. So the right-hand-side of \eqref{CompletionOfSquares} is positive definite, and has unique positive definite (principal) square root. Denoting this principal square root as $(\cdot)^{\frac{1}{2}}$, then \eqref{CompletionOfSquares} yields two symmetric solutions
\begin{align}
\bm{P}_{1}^{\pm} = -\frac{\bm{H}_{1}+\bm{\Sigma}_{d}}{2} \pm \left(\left(\frac{\bm{H}_{1}-\bm{\Sigma}_{d}}{2}\right)^{\!\!2} + \bm{I}\right)^{\frac{1}{2}},
\label{P1plusminus}
\end{align}
and the corresponding closed-loop matrices are $$\bar{\bm{A}}^{\pm}:=-\bar{\bm{A}}-\bar{\bm{B}}\bar{\bm{B}}^{\top}\bm{P}_{1}^{\pm} = \mp \left(\left(\frac{\bm{H}_{1}-\bm{\Sigma}_{d}}{2}\right)^{\!\!2} + \bm{I}\right)^{\frac{1}{2}}.$$ As $\bar{\bm{A}}^{+}\preceq\bm{0}$ and $\bar{\bm{A}}^{-}\succeq\bm{0}$, we conclude $\bm{P}_{1}^{+}$ is stabilizing and $\bm{P}_{1}^{-}$ is anti-stabilizing. Because we already argued the existence of unique stabilizing solution, $\bm{P}_{1}^{+}$ in \eqref{P1explicit} must be that one.
\end{proof}

\begin{remark}
It is known \cite[Thm. 5]{1099831} that if $\bm{P}_{1}$ is any of the non-unique symmetric solutions of \eqref{SpecificCARE}, then it satisfies the following Loewner ordering: $\bm{P}_{1}^{-}\preceq\bm{P}_{1}\preceq\bm{P}_{1}^{+}$ where $\bm{P}_{1}^{\pm}$ are given by \eqref{P1plusminus}. Note that for given $\bm{H}_{1}\in\mathbb{S}^{n}$ and $\bm{\Sigma}_{d}\in\mathbb{S}^{n}_{++}$, the matrices $\bm{P}^{\pm}\in\mathbb{S}^{n}$ in \eqref{P1plusminus} are sign-indefinite in general.
\end{remark}

Theorem \ref{Thm:CARE} shows that requiring stabilizability is a way to extract \emph{unique} symmetric solution of the CARE \eqref{SpecificCARE}. In Sec. \ref{sec:RecrsiveAlgorithm}, we will use this stabilizing symmetric solution \eqref{P1explicit} for the mapping $\bm{H}_{1}\mapsto\bm{P}_{1}$, i.e., we set 
\begin{align}
\bm{P}_{1}\equiv\bm{P}_{1}^{+}.
\label{P1equalsP1plus}    
\end{align}
We will prove that this choice guarantees convergence of the recursive algorithm proposed in Sec. \ref{sec:RecrsiveAlgorithm}. Before doing so, let us point out that the choice \eqref{P1equalsP1plus} ensures the invertibility of $\bm{P}_{1}+\bm{\Sigma}_d$ in \eqref{HmatrixRiccatiTerminalCondFinal}.

\begin{proposition}\label{prop:inertibilityofAumOfP1plusAndSigmad}
Given $\bm{\Sigma}_{d}\succ\bm{0}$ and the choice \eqref{P1equalsP1plus} for the symmetric matrix $\bm{P}_{1}\in\mathbb{S}^{n}$, the invertibility of $\bm{P}_{1}+\bm{\Sigma}_d$ is guaranteed, i.e., the map \eqref{HmatrixRiccatiTerminalCondFinal} is well-defined.   
\end{proposition}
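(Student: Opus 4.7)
The plan is to substitute the explicit formula \eqref{P1explicit} for the stabilizing solution into $\bm{P}_{1}+\bm{\Sigma}_{d}$, reduce the resulting expression to a function of a single symmetric matrix, and then use a spectral argument to show the result is in fact positive definite (which is strictly stronger than invertibility).

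First, I would set $\bm{X} := (\bm{\Sigma}_{d}-\bm{H}_{1})/2$, which lies in $\mathbb{S}^{n}$ since both $\bm{\Sigma}_{d},\bm{H}_{1}\in\mathbb{S}^{n}$. Substituting \eqref{P1explicit} gives
\begin{align*}
\bm{P}_{1}+\bm{\Sigma}_{d} \;=\; \frac{\bm{\Sigma}_{d}-\bm{H}_{1}}{2} + \left(\left(\frac{\bm{H}_{1}-\bm{\Sigma}_{d}}{2}\right)^{\!2}+\bm{I}\right)^{\!1/2} \;=\; \bm{X} + \bigl(\bm{X}^{2}+\bm{I}\bigr)^{1/2},
\end{align*}
where the principal square root is well defined because $\bm{X}^{2}+\bm{I}\succ\bm{0}$ for every symmetric $\bm{X}$.

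Next, since $\bm{X}$ commutes with every polynomial in $\bm{X}$, and the principal square root $(\bm{X}^{2}+\bm{I})^{1/2}$ is itself a (convergent) power series in $\bm{X}^{2}$, the matrices $\bm{X}$ and $(\bm{X}^{2}+\bm{I})^{1/2}$ commute and admit a common orthonormal eigenbasis. Diagonalizing $\bm{X}=\bm{U}\,\mathrm{diag}(\lambda_{1},\ldots,\lambda_{n})\,\bm{U}^{\top}$ with $\lambda_{i}\in\mathbb{R}$, the eigenvalues of $\bm{P}_{1}+\bm{\Sigma}_{d}$ are $\lambda_{i}+\sqrt{\lambda_{i}^{2}+1}$ for $i\in\{1,\ldots,n\}$. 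The elementary inequality $\sqrt{\lambda^{2}+1}>|\lambda|\geq -\lambda$ holds for every $\lambda\in\mathbb{R}$, so every such eigenvalue is strictly positive. Hence $\bm{P}_{1}+\bm{\Sigma}_{d}\succ\bm{0}$, which in particular implies invertibility, so the right-hand side of \eqref{HmatrixRiccatiTerminalCondFinal} is well-defined.

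There is no real obstacle beyond keeping the symbolic manipulation clean; the only point that could be overlooked is justifying the commutativity of $\bm{X}$ with $(\bm{X}^{2}+\bm{I})^{1/2}$ (which is what enables simultaneous diagonalization and the reduction to a scalar inequality), and this follows immediately from the spectral theorem since $\bm{X}\in\mathbb{S}^{n}$.
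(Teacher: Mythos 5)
Your proposal is correct and follows essentially the same route as the paper's proof: both substitute \eqref{P1explicit}, write $\bm{P}_{1}+\bm{\Sigma}_{d}=\bm{X}+(\bm{X}^{2}+\bm{I})^{1/2}$ with $\bm{X}=(\bm{\Sigma}_{d}-\bm{H}_{1})/2$, simultaneously diagonalize, and conclude positive definiteness from $\lambda_i+\sqrt{\lambda_i^2+1}>0$. The only cosmetic difference is that the paper justifies the commutation by directly diagonalizing the square root in the same eigenbasis rather than via a power-series argument.
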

\begin{proof}
Let $\bm{Z}:=\dfrac{\bm{\Sigma}_d - \bm{H}_{1}}{2}\in\mathbb{S}^{n}$, $\bm{Z}_{1}:=\left(\bm{Z}^2 + \bm{I}\right)^{\frac{1}{2}}\in\mathbb{S}^{n}_{++}$. With the choice \eqref{P1equalsP1plus}, we have
\begin{align}
\bm{P}_{1}+\bm{\Sigma}_{d} = \bm{Z} + \bm{Z}_{1}.
\label{ZplusZ1}    
\end{align}
Write the spectral decomposition $\bm{Z}=\bm{V}\bm{\Lambda}\bm{V}^{\top}$ for $\bm{V}$ orthogonal, and $\bm{\Lambda}={\mathrm{diag}}(\lambda_i(\bm{Z}))$. Then $\bm{Z}_{1} = \bm{V}\left(\bm{\Lambda}^{2} + \bm{I}\right)^{\frac{1}{2}}\bm{V}^{\top}$. Thus $\bm{Z}$ and $\bm{Z}_{1}$ commute, and are simultaneously diagnoalizable. Therefore, the eigenvalues of \eqref{ZplusZ1} are $\lambda_i(\bm{Z}) + \sqrt{\left(\lambda_i(\bm{Z})\right)^2 + 1}$. Since $\sqrt{\lambda_i^2 + 1}>\vert\lambda_i\vert$ $\forall \lambda_i\in\mathbb{R}$, each $\lambda_i + \sqrt{\lambda_i^2 + 1} > 0$. So $\bm{P}_{1}+\bm{\Sigma}_{d} \succ\bm{0}$, and is invertible. 
\end{proof}


\begin{figure*}
    \centering
\subfloat[]{%
  \includegraphics[width=0.32\linewidth]{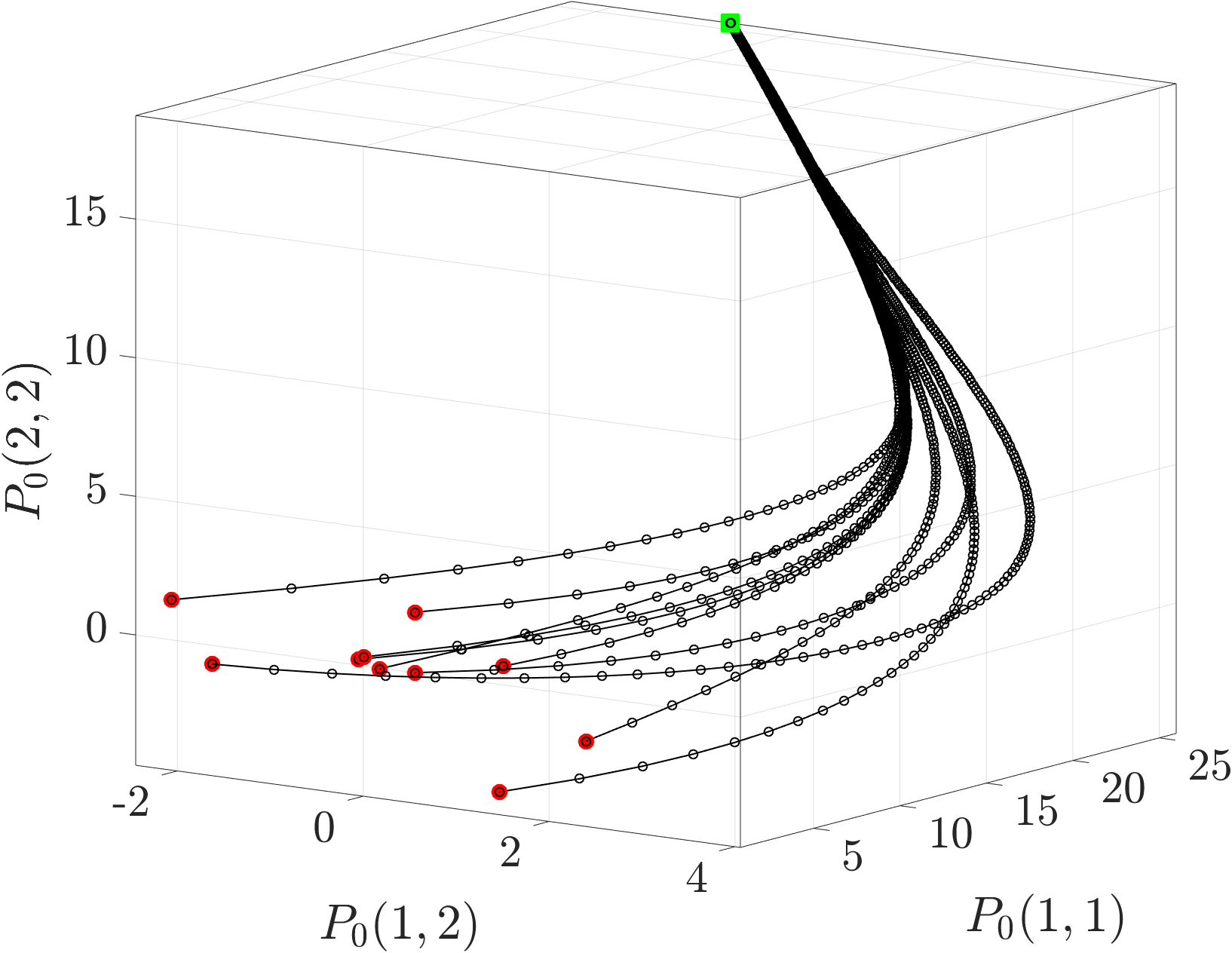}%
  \label{fig:recursionconvergenceS2}%
}\quad
\subfloat[]{%
  \includegraphics[width=0.32\linewidth]{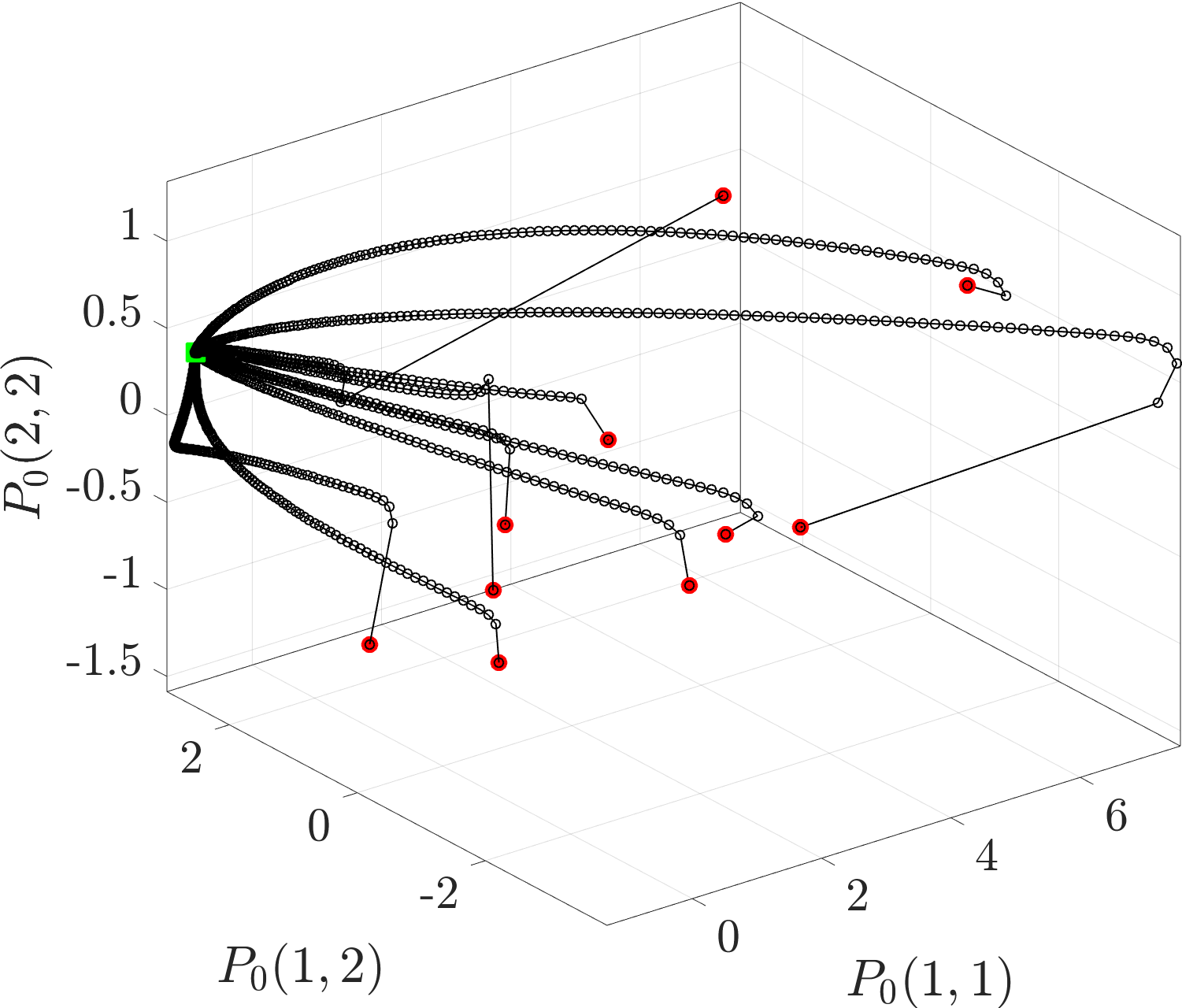}%
  \label{fig:recursionconvergenceS2another}%
}\quad
\subfloat[]{%
  \includegraphics[width=0.32\linewidth]{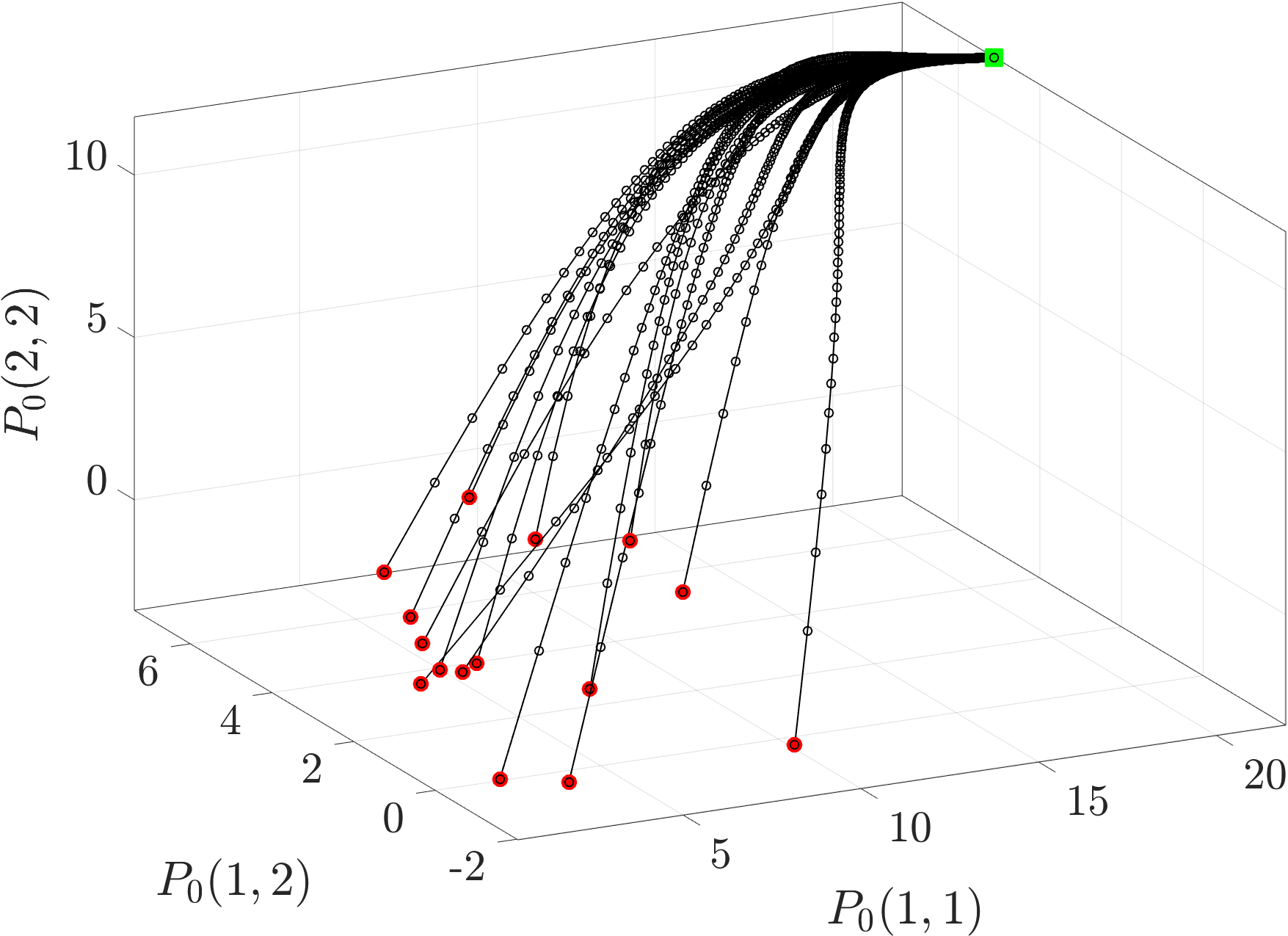}%
  \label{fig:recursionconvergenceS2yetanother}%
}
\caption{The phase portrait of the recursion $\bm{P}_{0}\mapsto \left(\bm{P}_{0}\right)_{\text{next}}$ proposed in Sec. \ref{sec:RecrsiveAlgorithm}. The three subplots (a)-(c) are for three randomly generated problem data $(\bm{A},\bm{B},\bm{Q},\bm{\Sigma}_0,\bm{\Sigma}_{d})$ with controllable $(\bm{A},\bm{B})\in\mathbb{R}^{2\times 2}\times\mathbb{R}^{2\times 1}$, $\bm{Q}\in\mathbb{S}^{2}_{+}$, and $\bm{\Sigma}_0,\bm{\Sigma}_{d}\in\mathbb{S}^{2}_{++}$. For fixed problem data, each subplot shows the convergence of the proposed recursion for random initial guesses $\bm{P}_{0}\in\mathbb{S}^{2}$ highlighted as red circular markers. In each subplot, the black circular markers show $1000$ iterates, and the green square is the converged $\bm{P}_{0}\in\mathbb{S}^{2}$.}
\label{fig:P0RecursionPhasePortrait}
\end{figure*}

\section{Recursive Algorithm}\label{sec:RecrsiveAlgorithm}
Building on the results from Sec. \ref{sec:MainIdeas} and \ref{sec:Mappings}, we propose a fixed point recursion of the form $\bm{P}_{0}\mapsto \left(\bm{P}_{0}\right)_{\text{next}}$ to solve \eqref{FOOC}. The proposed recursion, shown in Fig. \ref{fig:FixedPointRecursion}, is a composition of four mappings:
\begin{itemize}
\item $\bm{P}_{0}\to \bm{H}_{0}$ by evaluating \eqref{HmatrixRiccatiICFinal},

\item $\bm{H}_{0}\to \bm{H}_{1}$ by evaluating \eqref{H0toH1asLFT},

\item $\bm{H}_{1}\to \bm{P}_{1}$ by evaluating \eqref{P1explicit},

\item $\bm{P}_{1}\to \left(\bm{P}_{0}\right)_{\text{next}}$ by evaluating \eqref{P1toP0asLFT}.
\end{itemize}
Notice that all four mappings above, and thus their composition, are guaranteed to map $\mathbb{S}^{n}\to\mathbb{S}^{n}$. So it suffices to perform the proposed recursion over the half-vectorization of $\bm{P}_{0}$, i.e., over $n(n+1)/2$ reals instead of $n^2$ reals. This is computationally helpful for large $n$.

We next discuss the convergence of the proposed recursive algorithm.

\subsection{Convergence}
The proposed algorithm is a nonlinear recursion over the space of symmetric matrices. This recursion of the form $\bm{X}\mapsto\bm{F}(\bm{X})$, $\bm{X}\in\mathbb{S}^{n}$, where the matricial map $\bm{F}:\mathbb{S}^{n}\to\mathbb{S}^{n}$.
Our main result is Theorem \ref{thm:mainresult} that builds on auxiliary lemmas \ref{Lemma:F4bound}-\ref{Lemma:NondegenWideBlocks} in Appendix \ref{App:LemmaF4bound}. Specifically, Theorem \ref{thm:mainresult} establishes the existence-uniqueness of the fixed point $\bm{X}^{\star}\in\mathbb{S}^{n}$ of the proposed recursion, and its convergence. 

A natural proof strategy for such a result would be to show that the proposed recursion is contractive over the affine set $\mathbb{S}^{n}$ w.r.t. some metric such as the Frobenius or spectral norm of the difference. However, numerical experiments confirm that such contractive property does not hold for the proposed recursion $\bm{X}\mapsto\bm{F}(\bm{X})$. 

Another strategy could be separately arguing the existence-uniqueness of fixed point, and then establish the convergence by considering the function $V:\mathbb{S}^{n}\mapsto\mathbb{R}_{\geq 0}$ given by $V(\bm{X}):=\|\bm{X}-\bm{X}^{\star}\|_{\mathrm{Frobenius}}^{2}=\tr\left(\bm{X}-\bm{X}^{\star}\right)^{2}$ as a candidate Lyapunov function. However, numerics show that $V(\bm{F}(\bm{X})) \nless V(\bm{X})$ for all $\bm{X}\in\mathbb{S}^{n}\setminus\{\bm{X}^{\star}\}$. It is also unclear what could be an alternative choice of candidate Lyapunov function. 

Last but not the least, it is not apparent how to handle possible non-existence of inverses appearing in the LFTs \eqref{P1toP0asLFT}-\eqref{H0toH1asLFT}. Due to these difficulties, the proposed recursion calls for a careful reasoning.

\begin{theorem}\label{thm:mainresult}
Given the state transition matrix \eqref{defSTMofM} associated with \eqref{DefHamiltonianMatrix}, and $\bm{\Sigma}_0, \bm{\Sigma}_{d}\in\mathbb{S}^{n}_{++}$, define the mappings
\begin{subequations}
\begin{align}
\bm{F}_{1}\left(\bm{X}\right) &:= \bm{\Sigma}_{0}^{-1} - \bm{X}, \label{defF1}\\
\bm{F}_{2}\left(\bm{X}\right) &:= -\left(\bm{\Phi}_{11}^{\top}-\bm{X}\bm{\Phi}_{12}^{\top}\right)^{-1}\left(\bm{\Phi}_{21}^{\top}-\bm{X}\bm{\Phi}_{22}^{\top}\right), \label{defF2}\\
\bm{F}_{3}\left(\bm{X}\right) &:= -\frac{\bm{X}+\bm{\Sigma}_{d}}{2} + \left(\left(\frac{\bm{X}-\bm{\Sigma}_{d}}{2}\right)^{\!\!2} + \bm{I}\right)^{\frac{1}{2}}, \label{defF3}\\
\bm{F}_{4}\left(\bm{X}\right) &:= \left(\bm{X}\bm{\Phi}_{12} - \bm{\Phi}_{22}\right)^{-1}\left(\bm{\Phi}_{21}-\bm{X}\bm{\Phi}_{11}\right), \label{defF4}
\end{align}
\label{F1F2F3F4}    
\end{subequations}
in variable $\bm{X}\in\mathbb{S}^{n}$. The square root in \eqref{defF3} is understood as the principal square root. Then
\begin{itemize}
\item the composite endomorphism $\bm{F}:=\bm{F}_{4}\circ\bm{F}_{3}\circ\bm{F}_{2}\circ\bm{F}_{1}$
that maps $\mathbb{S}^{n}\to\mathbb{S}^{n}$, has a unique fixed point,

\item the recursion $\bm{P}_{0}\mapsto \left(\bm{P}_{0}\right)_{\text{next}}=\bm{F}\left(\bm{P}_{0}\right)$ converges to this unique fixed point for almost every $\bm{P}_{0}\in\mathbb{S}^{n}$.
\end{itemize}
\end{theorem}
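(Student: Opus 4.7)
The plan is to split the statement into three sub-claims: (i) $\bm{F}$ is well defined on a full-measure open subset of $\mathbb{S}^n$; (ii) $\bm{F}$ has a fixed point and it is unique; (iii) iterates starting from almost every $\bm{P}_0$ converge to that fixed point. Parts (i) and (ii) are essentially bookkeeping given the machinery already built in Sections \ref{sec:MainIdeas}--\ref{sec:Mappings}; the real work sits in (iii), where the authors' own Lemma \ref{Lemma:F4bound} is clearly designed to be the workhorse.

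For (i), I would check that each $\bm{F}_i$ sends $\mathbb{S}^n\to\mathbb{S}^n$: for $\bm{F}_1$ this is immediate, for $\bm{F}_2,\bm{F}_4$ it is Theorem \ref{Thm:LFTproperties}, and for $\bm{F}_3$ it is Theorem \ref{Thm:CARE} combined with Proposition \ref{prop:inertibilityofAumOfP1plusAndSigmad}. The only defect is that the LFTs \eqref{defF2}, \eqref{defF4} break down on the algebraic hypersurfaces $\{\det(\bm{\Phi}_{11}^{\top}-\bm{X}\bm{\Phi}_{12}^{\top})=0\}$ and $\{\det(\bm{X}\bm{\Phi}_{12}-\bm{\Phi}_{22})=0\}$, which (since $\bm{\Phi}_{12}$ is invertible by Lemma \ref{Lemma:STMblockproperties}) are proper algebraic subvarieties of $\mathbb{S}^n$ and therefore Lebesgue-null; the same is true of their preimages under the finitely many iterates one needs, so $\bm{F}^{(k)}$ is defined off a Lebesgue-null set for every fixed $k$.

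For (ii), I would bypass the nonconvex $\bm{K}_t$-parametrization and reformulate \eqref{OCP} in the variables $(\bm{\Sigma}_t,\bm{U}_t)$ with $\bm{U}_t:=\bm{K}_t\bm{\Sigma}_t$; the dynamics \eqref{CovarianceDynamics} become affine in $(\bm{\Sigma}_t,\bm{U}_t)$, the running cost becomes $\tr(\bm{U}_t\bm{\Sigma}_t^{-1}\bm{U}_t^{\top})+\tr(\bm{Q}_t\bm{\Sigma}_t)$ which is jointly convex for $\bm{\Sigma}_t\succ\bm{0}$, and the terminal penalty $\tfrac12\|\bm{\Sigma}_1-\bm{\Sigma}_d\|_F^2$ is strictly convex in $\bm{\Sigma}_1$. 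Assumption \textbf{A1} furnishes a feasible trajectory and coercivity, so the convex lift admits a unique minimizer. By Proposition \ref{prop:FOOC} this minimizer satisfies \eqref{FOOC}; the change of variable \eqref{defH} together with the stabilizing selection \eqref{P1equalsP1plus} imposed on $\bm{F}_3$ makes the correspondence between solutions of \eqref{CoupledSystemFinal} and fixed points of $\bm{F}$ a bijection, giving existence and uniqueness of $\bm{P}_0^\star\in\mathbb{S}^n$.

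For (iii), contraction in Frobenius norm and a quadratic Lyapunov function are both ruled out by the authors' numerics, so I would proceed in two steps. First, I would use Lemma \ref{Lemma:F4bound} (which I expect bounds $\|\bm{F}_4(\bm{X})\|$ in terms of $\|\bm{X}\|$ outside the singular set) in combination with the elementary facts that $\bm{F}_1$ is an affine involution, $\bm{F}_2$ is a bijective LFT, and $\bm{F}_3$ is $1$-Lipschitz in the spectral norm (since $t\mapsto \sqrt{t^2+1}$ has derivative bounded by one and the matrix square root is operator-monotone on the positive cone), to deduce that the forward orbit $\{\bm{F}^{(k)}(\bm{P}_0)\}_{k\geq 0}$ is precompact for a.e.\ $\bm{P}_0$. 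Bolzano--Weierstrass then yields at least one accumulation point, continuity of $\bm{F}$ away from its singular set forces every accumulation point to be a fixed point, and uniqueness from step (ii) collapses this to the single point $\bm{P}_0^\star$. Second, to upgrade subsequential to full convergence I would linearize $\bm{F}$ at $\bm{P}_0^\star$ and verify that ${\mathrm D}\bm{F}(\bm{P}_0^\star)$ has spectral radius strictly less than one, using the stabilizing character of $\bm{P}_1^+$ in \eqref{P1explicit} (which forces the closed-loop eigenvalues of $\bar{\bm{A}}^+$ to be nonpositive and so contracts perturbations through the Riccati flow $\bm{F}_4$); this gives a local stable manifold around $\bm{P}_0^\star$ that every precompact orbit must eventually enter.

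The main obstacle I anticipate is the last local-contraction step: verifying that ${\mathrm D}\bm{F}(\bm{P}_0^\star)$ actually contracts requires differentiating the composed LFT/CARE/LFT chain and extracting a spectral estimate for a Kronecker-structured Jacobian on $\mathbb{S}^n$, and it is here that the Hamiltonian identities in Lemma \ref{Lemma:STMblockproperties} together with the precise stabilizing bound in Lemma \ref{Lemma:F4bound} will have to be combined carefully. The exceptional null set in ``almost every $\bm{P}_0$'' is then precisely the set of initial conditions whose forward orbit hits one of the countably many pre-images of the LFT singular varieties before entering the local basin.
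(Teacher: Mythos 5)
Your part (i) and the first half of your part (iii) essentially reproduce the paper's argument (singular sets of the LFTs are Lebesgue-null; bounded orbit, Bolzano--Weierstrass, every accumulation point is a fixed point, uniqueness collapses the limit set). But there are two genuine problems. First, your existence--uniqueness argument (ii) via the convex $(\bm{\Sigma}_t,\bm{U}_t)$ reformulation hinges on an asserted ``bijection between solutions of \eqref{CoupledSystemFinal} and fixed points of $\bm{F}$,'' and that bijection is exactly the delicate point: $\bm{F}_3$ \emph{selects} the stabilizing solution \eqref{P1explicit} of the CARE \eqref{SpecificCARE}, which is only the \emph{sum} of the two defining relations \eqref{FirstWayReduced}--\eqref{SecondWayReduced}; a symmetric CARE solution need not satisfy \eqref{HmatrixRiccatiTerminalCondFinal}, and conversely a solution of the boundary value problem need not have $\bm{P}_1$ equal to the stabilizing branch. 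Moreover, uniqueness of the minimizer of the convex lift (strictly convex only in $\bm{\Sigma}_1$) does not by itself give uniqueness of the stationary point $\bm{P}_0$ of the first-order system. The paper avoids all of this: uniqueness follows because $\bm{F}$ is a composition of bijections (Theorem \ref{Thm:LFTproperties} for $\bm{F}_2,\bm{F}_4$, bijectivity of the principal square root for $\bm{F}_3$), and existence follows from Brouwer's fixed point theorem applied to a compact box, which in turn rests on the fact that Lemma \ref{Lemma:F4bound} gives a \emph{uniform} bound on the range of $\bm{F}_4$ (and similarly $\bm{F}_2$) --- not, as you anticipated, a bound ``in terms of $\|\bm{X}\|$.'' That uniform range bound is also what makes the orbit bounded after one step, so your precompactness claim would not go through with the weaker growth bound you posited.

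Second, the ``main obstacle'' you identify --- linearizing $\bm{F}$ at $\bm{P}_0^{\star}$ and showing $\rho({\mathrm{D}}\bm{F}(\bm{P}_0^{\star}))<1$ --- is unnecessary. A bounded sequence in a finite-dimensional space whose every convergent subsequence has the same limit converges to that limit: otherwise a subsequence bounded away from $\bm{P}_0^{\star}$ would itself have a convergent sub-subsequence whose limit is a second fixed point, contradicting uniqueness. This is precisely how the paper upgrades subsequential to full convergence, so no local spectral analysis of the Kronecker-structured Jacobian is needed. A smaller quibble: your claim that $\bm{F}_3$ is $1$-Lipschitz ``in the spectral norm'' because $t\mapsto\sqrt{t^2+1}$ has derivative bounded by one is not the right mechanism; the paper's Lemma \ref{Lemma:F3isNonexpansive} works in the Frobenius norm via the Daletskii--Krein divided-difference (Schur multiplier) bound applied to $\psi(\lambda)=-\lambda+\sqrt{\lambda^2+1}$, whose derivative is bounded by $2$, cancelled by the factor $1/2$ in $\bm{F}_{31}$.
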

\begin{proof}
\textbf{Existence-uniqueness of fixed point.} From Theorem \ref{Thm:LFTproperties}, the LFTs \eqref{defF2}-\eqref{defF4}, in their domain of definitions, are bijective. So is the linear map \eqref{defF1}. Since $(\bm{X}-\bm{\Sigma}_{d})/2$ is symmetric, its square is positive semidefinite (by spectral theorem), so the square root in \eqref{defF3} acts on a positive definite matrix. Because principal square root is bijective over the positive semidefinite cone, the mapping \eqref{defF3} is also bijective. As a result, the composition $\bm{F}:=\bm{F}_{4}\circ\bm{F}_{3}\circ\bm{F}_{2}\circ\bm{F}_{1}$ is bijective. Hence $\bm{F}$ can have \emph{at most one} fixed point.

Now we argue that $\bm{F}$ has \emph{at least one} fixed point. To do so, we note that the set $\mathbb{S}^{n}$ is closed and convex (w.r.t. standard Euclidean norm topology). The composite map $\bm{F}:\mathbb{S}^{n}\to\mathbb{S}^{n}$ is continuous and bounded. This can be seen from the fact that by Lemma \ref{Lemma:F4bound} in Appendix \ref{App:LemmaF4bound}, the map $\bm{F}_{4}$ is bounded, and by similar argument, $\bm{F}_{2}$ is bounded as well. The map $\bm{F}_{1}$, being affine, is non-expansive. By Lemma \ref{Lemma:F3isNonexpansive} in Appendix \ref{App:LemmaF4bound}, the map $\bm{F}_{3}$ is non-expansive as well. Finally, the $\bm{F}_{i}$ are continuous for all $i=1,\hdots,4$. So for any $\bm{X}\in\mathbb{S}^{n}$ with finite norm, the half-vectorization map $\bm{f}:={\mathrm{vech}}(\bm{F}(\bm{X})):\mathbb{R}^{n(n+1)/2}\to \mathbb{R}^{n(n+1)/2}$ is uniformly bounded by some finite $M>0$, i.e.,
\begin{align}
\underset{i\in\{1,\hdots,\frac{n(n+1)}{2}\}}{\sup}\quad\underset{\bm{x}\in\mathbb{R}^{\frac{n(n+1)}{2}}}{\sup}\vert f_{i}(\bm{x})\vert\leq M.
\label{UniformBoundVectorizedMap}    
\end{align}
Hence $\bm{f}$ maps the compact convex set $[-M,M]^{n(n+1)/2}$ to itself. By Brouwer's fixed point theorem \cite[p. 14]{milnor1965topology}, $\bm{f}$ and thus $\bm{F}$, admits a fixed point. 

Combining the ``at most one" and ``at least one" arguments, we conclude that the map $\bm{F}$ has a unique fixed point.

\noindent\textbf{Convergence.} 
To set aside the issue of possible initial conditions $\bm{X}^{(0)}\in\mathbb{S}^{n}$ for which the inverses in the LFTs \eqref{defF2}, \eqref{defF4} may not exist during the recursion, consider for now only those $\bm{X}^{(0)}\in\mathbb{S}^{n}$ for which $\{\bm{X}^{(k)}\}_{k\in\mathbb{N}}$ remains a well-defined sequence of real symmetric matrices generated by $\bm{X}^{(k)}=\bm{F}^{(k)}(\bm{X}^{(0)})$, where $\bm{F}^{(k)}$ denotes $k$-fold composition of $\bm{F}$. Call such initial conditions as ``good" $\bm{X}^{(0)}\in\mathbb{S}^{n}$. We will address its complement, i.e., ``bad" $\bm{X}^{0}\in\mathbb{S}^{n}$ leading to LFT singularity in a bit.

By \eqref{UniformBoundVectorizedMap}, the sequence $\{\bm{X}^{(k)}\}_{k\in\mathbb{N}}$ generated by ``good" $\bm{X}^{0}$, is bounded, and therefore, by the Bolzano-Weierstrass theorem \cite[Thm. 3.6(b)]{rudin1976principles}, has a convergent subsequence. Since $\mathbb{S}^{n}$ is closed, the limit point of the
convergent subsequence must also be in $\mathbb{S}^{n}$. Furthermore, by continuity of $\bm{F}$, this limit point must be a fixed point of
the map $\bm{X}\mapsto\bm{F}(\bm{X})$. 

On the other hand, we already proved the existence-uniqueness of fixed point for $\bm{F}:\mathbb{S}^{n}\to\mathbb{S}^{n}$. So the full sequence $\{\bm{X}^{(k)}\}$ converges to the same fixed point (since otherwise, we could extract a subsequence bounded away from this fixed point, which would converge to a different limit point, contradicting uniqueness). This proves that convergence to unique fixed point is guaranteed for all ``good" $\bm{X}^{(0)}\in\mathbb{S}^{n}$.

Now let us address a ``bad" $\bm{X}^{(0)}\in\mathbb{S}^{n}$ that leads to singularity in any of the two LFTs \eqref{defF2}, \eqref{defF4}. Notice that both these LFTs are of the form 
\begin{align}
(\bm{R} - \bm{X}\bm{S})^{-1}\left(\bm{X}\tilde{\bm{R}} - \tilde{\bm{S}}\right),
\label{F2F4genericform}    
\end{align}
with $\bm{R}=\bm{\Phi}_{11}^{\top}$, $\bm{S}=\bm{\Phi}_{11}^{\top}$ for \eqref{defF2}, and $\bm{R}=\bm{\Phi}_{22}$, $\bm{S}=\bm{\Phi}_{12}$ for \eqref{defF4}.
From Lemma \ref{Lemma:NondegenWideBlocks}, the wide matrix $\left[\bm{R},-\bm{S}\right]$ is nondegenerate (Definition \ref{def:NondegerateMatrix}) for both \eqref{defF2} and \eqref{defF4}. By \cite[Thm. 1]{potapov1988linear}, for $\left[\bm{R},-\bm{S}\right]$ nondegenerate, the collection of $\bm{X}\in\mathbb{S}^{n}$ such that $\bm{R} - \bm{X}\bm{S}$ is nonsingular is dense in $\mathbb{S}^{n}$. Thus, the collection of ``bad" $\bm{X}^{(0)}\in\mathbb{S}^{n}$ is a set of measure zero, i.e., convergence to unique fixed point is guaranteed for almost every $\bm{X}^{(0)}\in\mathbb{S}^{n}$.
\end{proof}

Fig. \ref{fig:P0RecursionPhasePortrait} illustrates the convergence of the proposed algorithm for $n=2$. Our numerical experiments presented in Sec. \ref{sec:examples} next, find that the convergence is fast in practice.

\begin{algorithm}[t]
\caption{Solve LQ covariance steering problem \eqref{OCP} with terminal cost \eqref{DefTerminalCost}}\label{alg:Algorithm}
\begin{algorithmic}[1] 
\Require Time horizon $[t_0,t_1]$, matrix tuple $(\bm{A}_t,\bm{B}_t,\bm{Q}_t)$ $\forall t\in[t_0,t_1]$ satisfying assumptions \textbf{A1}-\textbf{A2}, covariances $\bm{\Sigma}_0,\bm{\Sigma}_d$ satisfying assumption \textbf{A3}, numerical tolerance {\texttt{tol}}, maximum number of iteration {\texttt{maxIter}}\newline

\State Compute the state transition matrix \eqref{STMBlockFormAtBoundary} for the coefficient matrix \eqref{DefHamiltonianMatrix}

\State Make a random initial guess $\bm{P}_{0}\in\mathbb{S}^{n}$ \Comment{i.i.d. uniform $n(n+1)/2$ entries over some bounding box}

\State {\texttt{err}} $\gets 0$ \Comment{Initialize error}

\State {\texttt{idx}} $\gets 1$ \Comment{Initialize recursion index}

\While{(({\texttt{err}}$>${\texttt{tol}})\&\&({\texttt{idx}}$\leq${\texttt{maxIter}}))}

\State $\bm{H}_{0}\gets \bm{\Sigma}_{0}^{-1} - \bm{P}_{0}$ \Comment{From \eqref{HmatrixRiccatiICFinal}}

\State $\bm{H}_{1}\!\gets\!\left(\!\bm{H}_{0}\bm{\Phi}_{12}^{\!\top}-\bm{\Phi}_{11}^{\!\top}\!\right)^{\!-1}\!\!\left(\bm{\Phi}_{21}^{\!\top}\!-\!\bm{H}_{0}\bm{\Phi}_{22}^{\!\top}\right)\!$\!
\Comment{From \eqref{H0toH1asLFT}}

\State $\bm{P}_{1} \gets -\frac{\bm{H}_{1}+\bm{\Sigma}_{d}}{2} + \left(\left(\frac{\bm{H}_{1}-\bm{\Sigma}_{d}}{2}\right)^{\!\!2} + \bm{I}\right)^{\frac{1}{2}}$ \Comment{From \eqref{P1explicit}}

\State $\!\left(\bm{P}_{0}\right)_{\text{next}}\!\!\gets\! \!\left(\!\bm{P}_{1}\bm{\Phi}_{12} - \bm{\Phi}_{22}\!\right)^{\!-1}\!\!\left(\!\bm{\Phi}_{21}-\bm{P}_{1}\bm{\Phi}_{11}\!\right)$\!\Comment{From \eqref{P1toP0asLFT}}

\State {\texttt{err}} $\gets \|\left(\bm{P}_{0}\right)_{\text{next}} - \bm{P}_{0}\|_{\mathrm{Frobenius}}$ \Comment{Update error}

\State {\texttt{idx}} $\gets {\texttt{idx}} + 1$ \Comment{Update recursion index}

\State $\bm{P}_{0}\gets\left(\bm{P}_{0}\right)_{\text{next}}$

\EndWhile

\State $\bm{P}_{t}^{\mathrm{opt}}\:\forall t\in[t_0,t_1]\gets$ integrate \eqref{CovarianceCostateODE} with converged $\bm{P}_0$ as initial condition

\State $\bm{\Sigma}_{t}^{\mathrm{opt}}\:\forall t\in[t_0,t_1]\gets$ integrate \eqref{CovarianceStateODE} with $\bm{\Sigma}_0$ as initial condition, and computed $\bm{P}_{t}^{\mathrm{opt}}$ \Comment{Optimally controlled covariance}

\State $\bm{K}_{t}^{\mathrm{opt}} \gets -\bm{B}_{t}^{\top}\bm{P}_{t}^{\mathrm{opt}}\:\forall t\in[t_0,t_1]$ \Comment{Optimal feedback gain}

\State \Return $\bm{\Sigma}_{t}^{\mathrm{opt}},\bm{K}_{t}^{\mathrm{opt}} \:\forall t\in[t_0,t_1]$

\end{algorithmic}
\end{algorithm}

\subsection{Overall algorithm}
Consider the map $\bm{F}$ in Theorem \ref{thm:mainresult}. We use the converged $\bm{P}_{0}$ obtained from the recursion $\bm{P}_{0}\mapsto \bm{F}\left(\bm{P}_{0}\right)$, to solve for $\left(\bm{\Sigma}_t^{\mathrm{opt}},\bm{P}_t^{\mathrm{opt}}\right)$ from \eqref{FOOC}. Specifically, using the converged $\bm{P}_{0}$ as initial condition, we forward integrate \eqref{CovarianceCostateODE} to compute $\bm{P}_{t}^{\mathrm{opt}}$ for all $t\in[t_0,t_1]$. Using this $\bm{P}_{t}^{\mathrm{opt}}$ and the given initial condition \eqref{InitialCovariance}, we forward integrate \eqref{CovarianceStateODE} to obtain $\bm{\Sigma}_{t}^{\mathrm{opt}}$ for all $t\in[t_0,t_1]$. By Proposition \ref{prop:FOOC}, the optimal control  $\bm{u}_{t}^{\mathrm{opt}}=-\bm{B}_{t}^{\top}\bm{P}_{t}^{\mathrm{opt}}\bm{x}_t$.

For the readers' convenience, Algorithm \ref{alg:Algorithm} outlines the overall computation.

We stress here the importance of random initial guess stated in line 2 of Algorithm \ref{alg:Algorithm}. Since the convergence guarantee in Theorem \ref{thm:mainresult} holds for almost every $\bm{P}_{0}\in\mathbb{S}^{n}$, Algorithm \ref{alg:Algorithm} converges in practice (i.e., with probability one).

\section{Numerical Examples}\label{sec:examples}
In this Section, we illustrate the effectiveness of the proposed covariance steering algorithm via two numerical examples. In both, we fix $[t_0,t_1]=[0,1]$.

To implement the recursion $\bm{P}_{0}\mapsto \left(\bm{P}_{0}\right)_{\text{next}}$, we use \texttt{while} loop with convergence criterion that all components of $\bm{P}_{0}$ and $\left(\bm{P}_{0}\right)_{\text{next}}$ are within an absolute deviation (i.e., numerical tolerance) of $10^{-8}$ or less.


\subsection{Noisy double integrator}\label{subsec:DI}
\noindent We solve an instance of \eqref{OCP} for the noisy double integrator with $n=2$ states, and $m=1$ input and noise:
\begin{subequations}
\begin{align}
    \differential x_{1t} &= x_{2t}\:\differential t,\\
    \differential x_{2t} &= u_{t}\:\differential t + \differential w_t.
\end{align}
\label{NoisyDoubleIntegrator}    
\end{subequations}
The pair $\left(\bm{A}_t,\bm{B}_t\right)\equiv\left(\begin{bmatrix}
0 & 1\\
0 & 0
\end{bmatrix},\begin{bmatrix}0\\
1\end{bmatrix}\right)$ $\forall t\in[t_0,t_1]$, satisfies assumption \textbf{A1}.
We fix $\bm{Q}_{t}\equiv\bm{I}$, $\rho_0 = \mathcal{N}(\bm{0}, \bm{\Sigma}_0)$, $\rho_d = \mathcal{N}(\bm{0}, \bm{\Sigma}_d)$ with randomly generated positive definite matrices 
$$\bm{\Sigma}_0=\left[\begin{array}{cc}
4.7295 & 1.9951 \\
1.9951 & 3.6157
\end{array}\right], \; \bm{\Sigma}_d=\left[\begin{array}{cc}
1.1189 & 0.7780 \\
0.7780 & 1.7407
\end{array}\right],
$$
thereby satisfying assumptions \textbf{A2}-\textbf{A3}. We may interpret $(x_{1t},x_{2t})$ as position-velocity tuple, and $u_{t}$ as input force. 

Fig. \ref{fig:RecursionPlotDI} illustrates the convergence of the recursive algorithm $\bm{P}_{0}\mapsto \left(\bm{P}_{0}\right)_{\text{next}}$ proposed in Sec. \ref{sec:RecrsiveAlgorithm} with randomly initialized $\bm{P}_{0}\in\mathbb{S}^{2}$. The converged $\bm{P}_{0}\in\mathbb{S}^{2}$ is then used to solve for $\left(\bm{\Sigma}_t^{\mathrm{opt}},\bm{P}_t^{\mathrm{opt}}\right)$ from \eqref{FOOC}.

Fig. \ref{fig:EllipsoidPlotDI} shows the optimally controlled evolution of $1\sigma$ ellipses of $\bm{\Sigma}_{t}^{\mathrm{opt}}$ overlaid on $5$ closed-loop state sample paths w.r.t. time $t\in [0,1]$. The initial conditions for these $5$ paths are sampled from $\rho_{0}$, and the paths are computed via Euler-Maruyama integration of \eqref{NoisyDoubleIntegrator} with step size $5\times10^{-4}$.

The red ellipse at $t=1$ is the $1\sigma$ ellipse for the desired covariance $\bm{\Sigma}_d$. The optimally controlled terminal covariance
\begin{equation*}
\bm{\Sigma}_1=\left[\begin{array}{cc}
4.2282 & -0.0504 \\
-0.0504 & 1.7726
\end{array}\right],
\end{equation*}
which from Fig. \ref{fig:EllipsoidPlotDI}, is close to the desired but not identical, as expected. 

Fig. \ref{fig:ControlEffortPlot(DI)} shows the optimal input sample paths corresponding to the closed-loop sample paths in Fig. \ref{fig:EllipsoidPlotDI}.

\begin{figure}[t] 
    \centering
\includegraphics[width=0.95\linewidth]{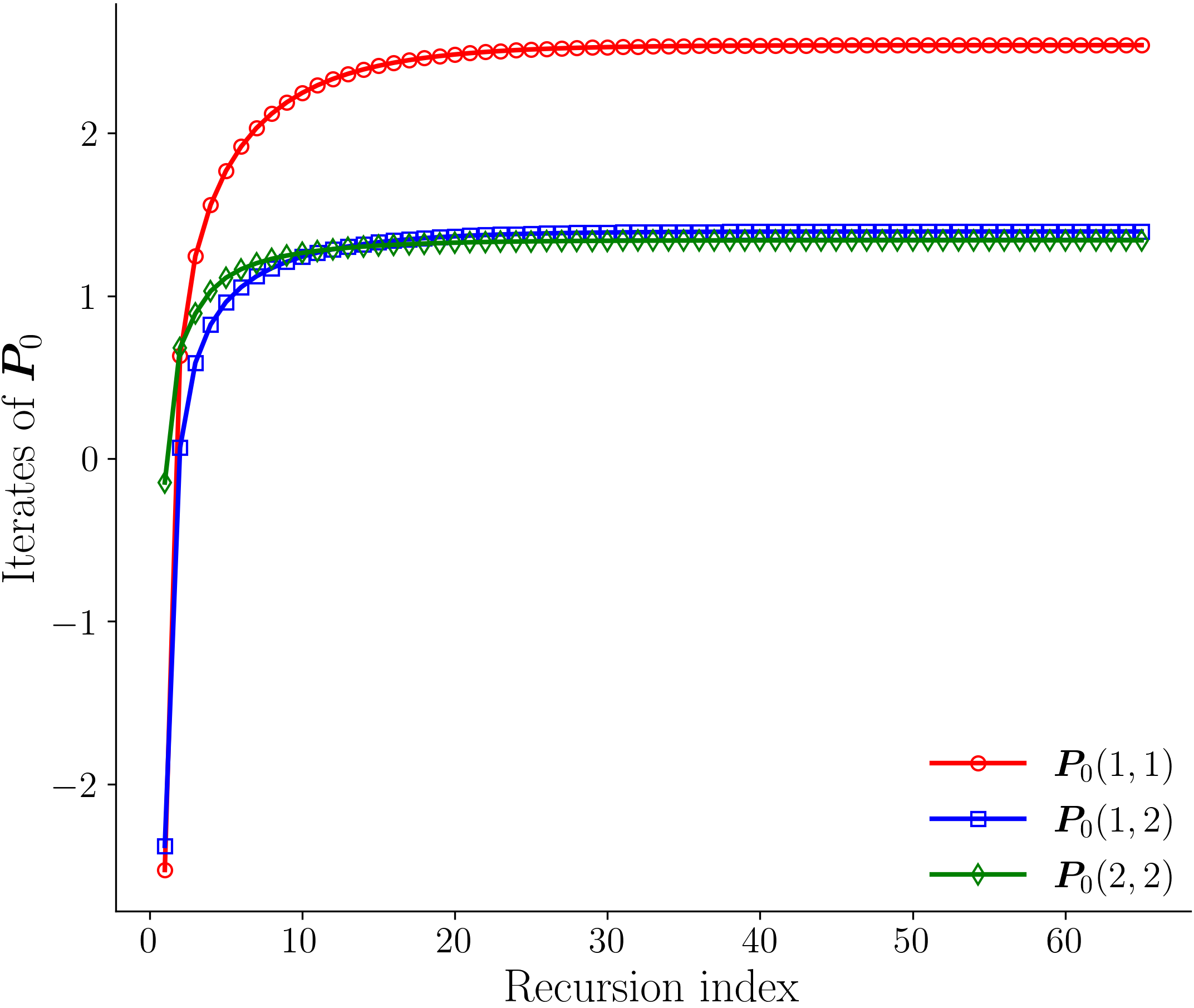}
    \caption{Convergence of the $3$ triangular elements of $\bm{P}_0\in\mathbb{S}^{2}$ for the numerical example in Sec. \ref{subsec:DI}.}
    \label{fig:RecursionPlotDI}
\end{figure}

\begin{figure}[t] 
    \centering
    \includegraphics[width=0.9\linewidth]{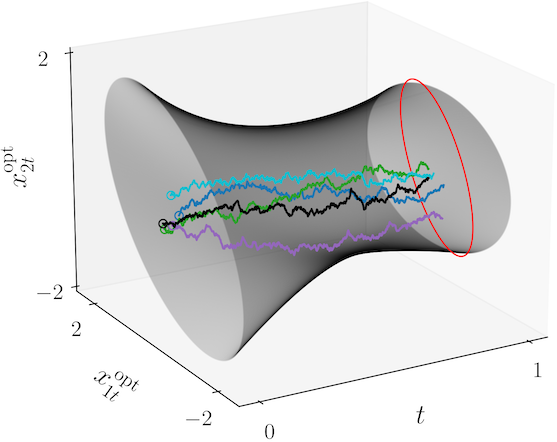}
    \caption{$500$ optimally controlled covariance snapshots (gray ellipses) and $5$ closed-loop state sample paths for the numerical example in Sec. \ref{subsec:DI}. The hollow circular markers denote the initial conditions for these sample paths. The red-lined ellipse shows the desired terminal covariance $\bm{\Sigma}_{d}$.}
    \label{fig:EllipsoidPlotDI}
\end{figure}

\begin{figure}[t] 
    \centering
    \includegraphics[width=0.95\linewidth]{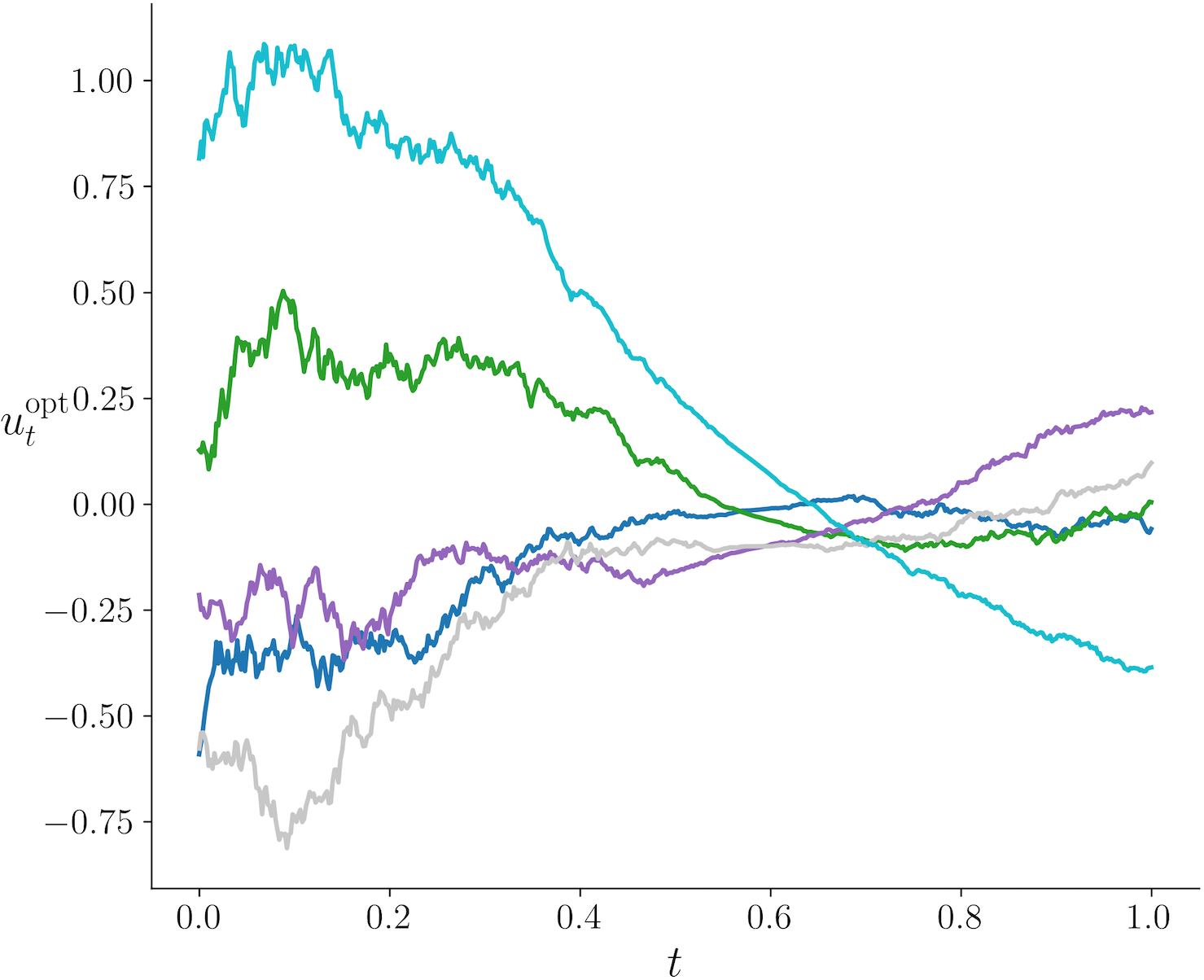}
    \caption{Optimal input paths corresponding to the state sample paths in Fig. \ref{fig:EllipsoidPlotDI}.}
    \label{fig:ControlEffortPlot(DI)}
\end{figure}


\begin{figure}[t]
\centering
\includegraphics[width=\linewidth]{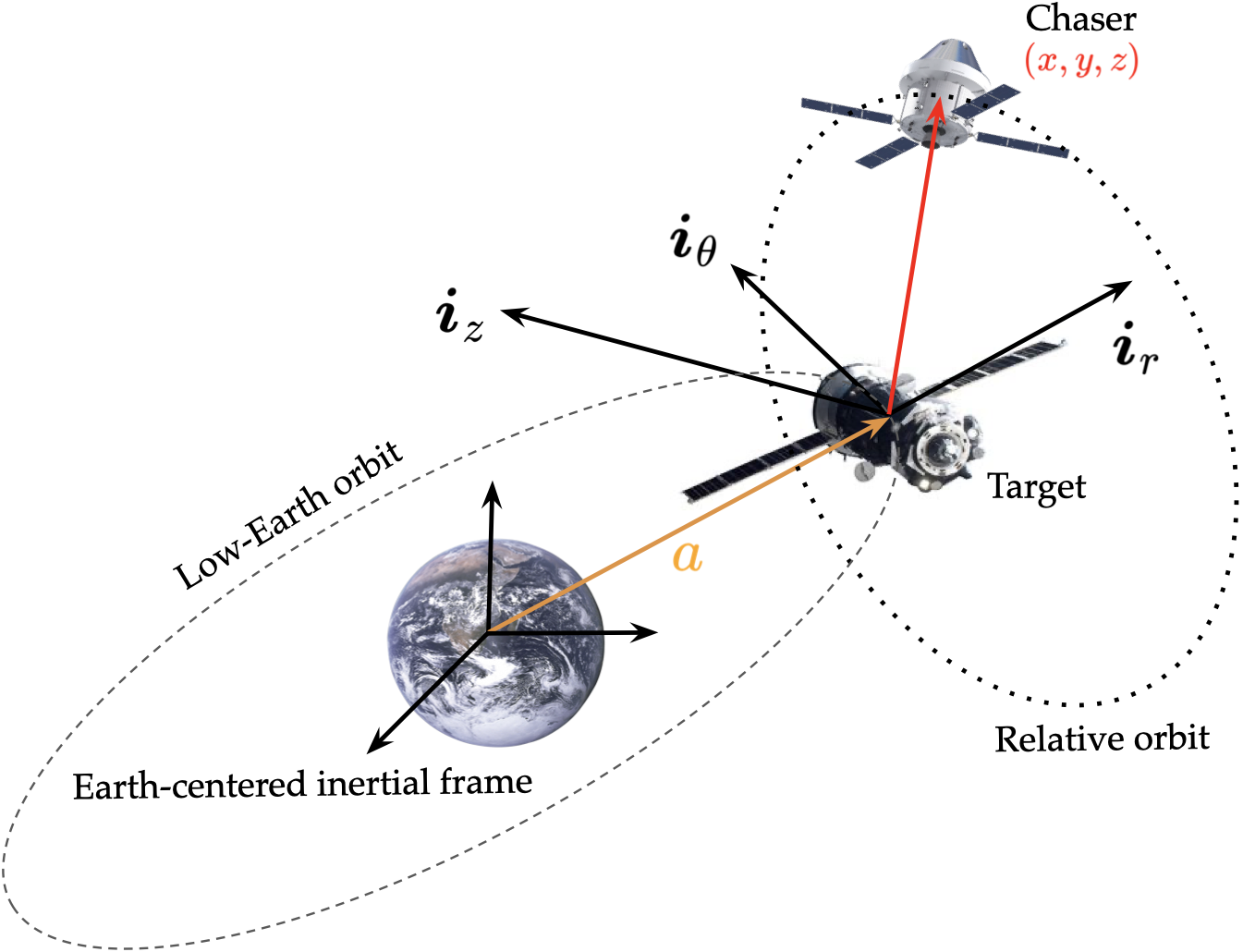}
    \caption{The relative orbital dynamics for the noisy Clohessy–Wiltshire model in Sec. \ref{subsec:CW}.}
    \label{fig:CWschematic}
\end{figure}

\begin{figure}[t] 
    \centering
    \includegraphics[width=0.95\linewidth]{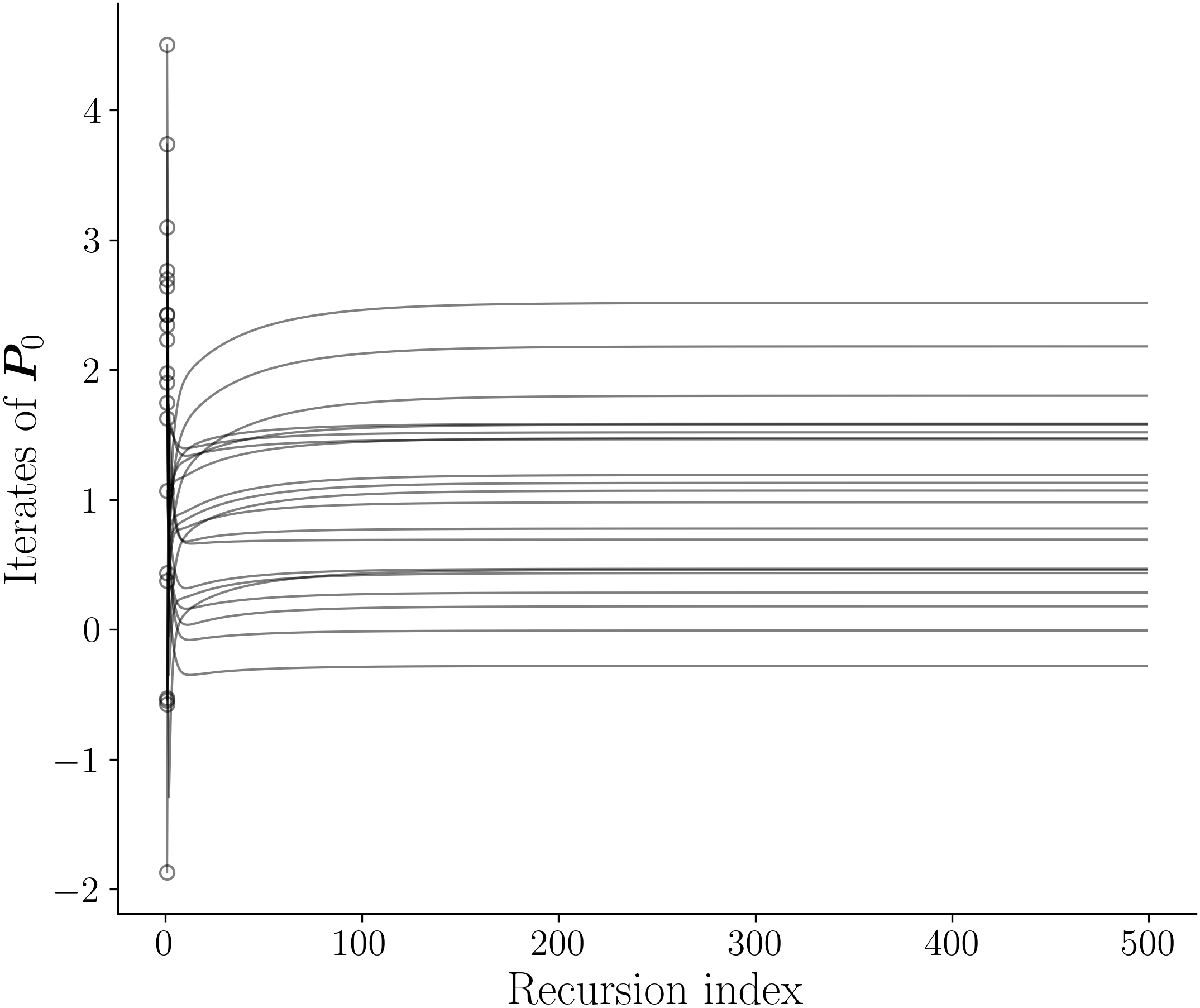}
    \caption{Convergence of the $21$ triangular elements of $\bm{P}_0\in\mathbb{S}^{6}$ for the numerical example in Sec. \ref{subsec:CW}. The random initializations are shown as hollow circular markers.}
    \label{fig:RecursionPlotCW}
\end{figure}

\begin{figure}[t!]
    \centering
    \begin{subfigure}[t]{0.8\linewidth}
        \centering
        \includegraphics[width=\linewidth]{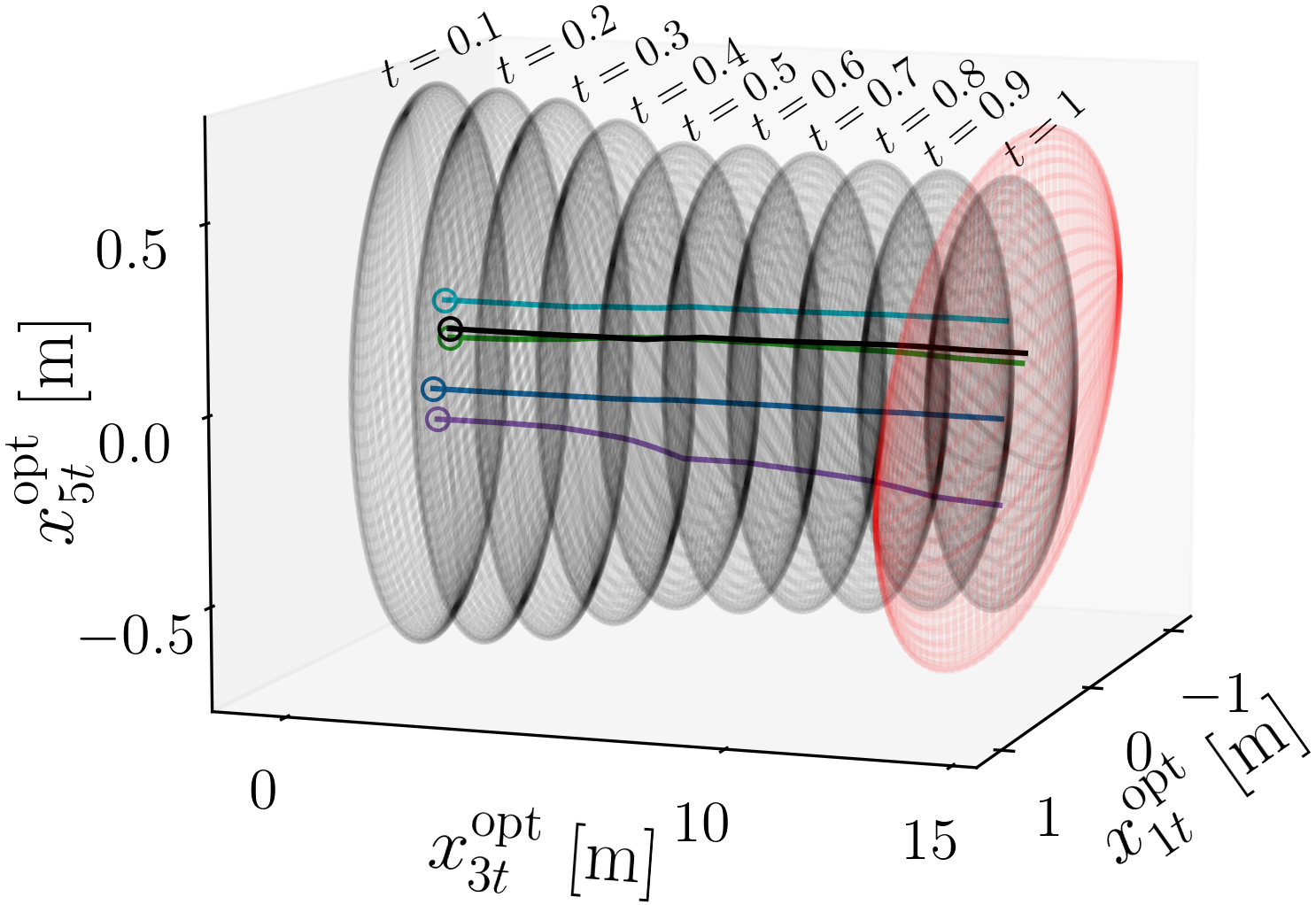}
        \caption{Marginal position covariances.}
    \end{subfigure}\\
\vspace*{0.1in} 
    \begin{subfigure}[t]{0.75\linewidth}
        \centering
        \includegraphics[width=\linewidth]{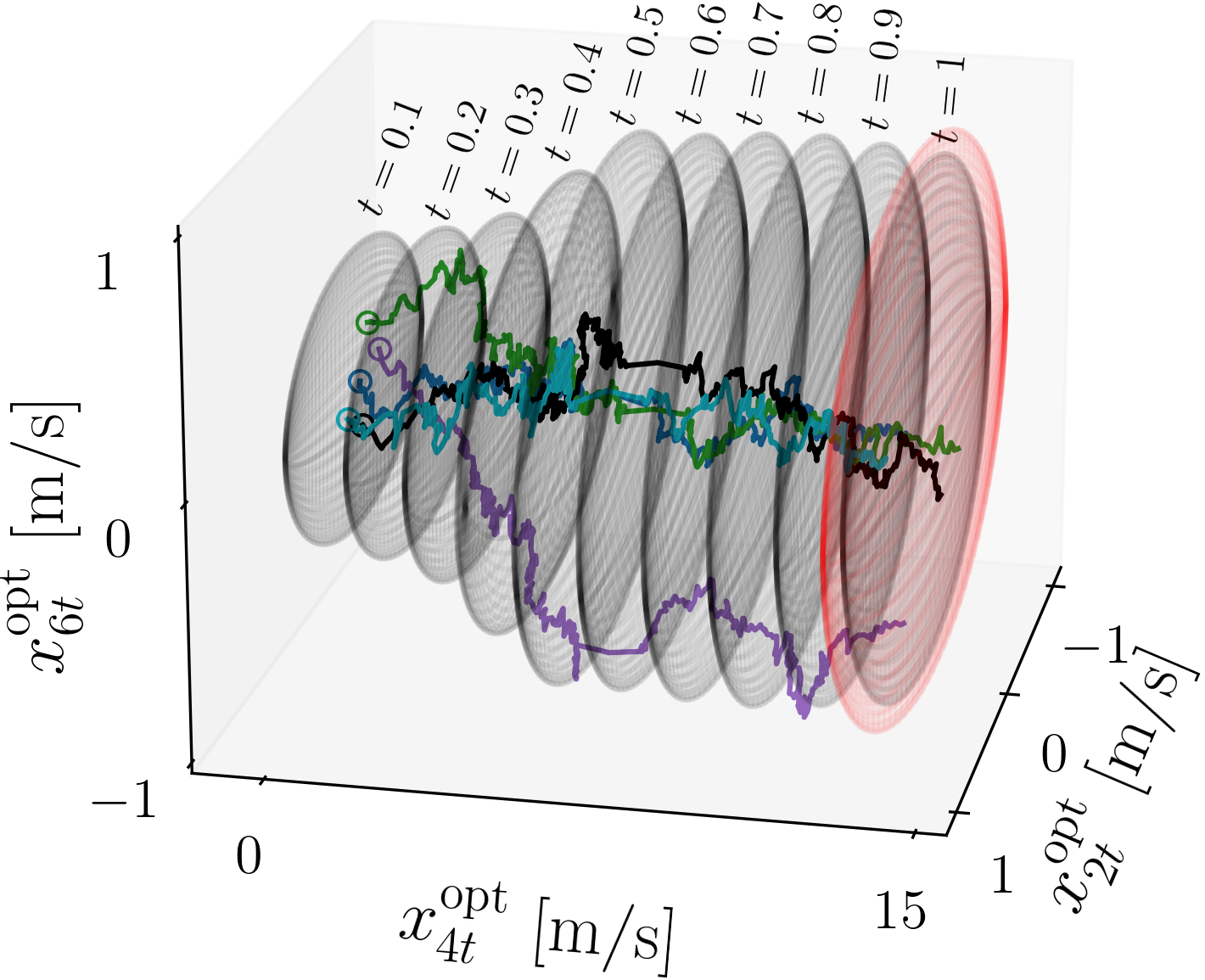}
        \caption{Marginal velocity covariances.}
    \end{subfigure}
    \caption{Optimally controlled covariances (gray wireframe ellipsoids) and $5$ closed-loop state sample paths for the numerical example in Sec. \ref{subsec:CW} in the (a) position and (b) velocity coordinates. The hollow circular markers denote the initial conditions for these sample paths. The red wireframe ellipsoids correspond to the position and velocity marginal covariances of $\bm{\Sigma}_{d}$.}
\label{CWcovariances}    
\end{figure}



\begin{figure}[t] 
    \centering
    \includegraphics[width=\linewidth]{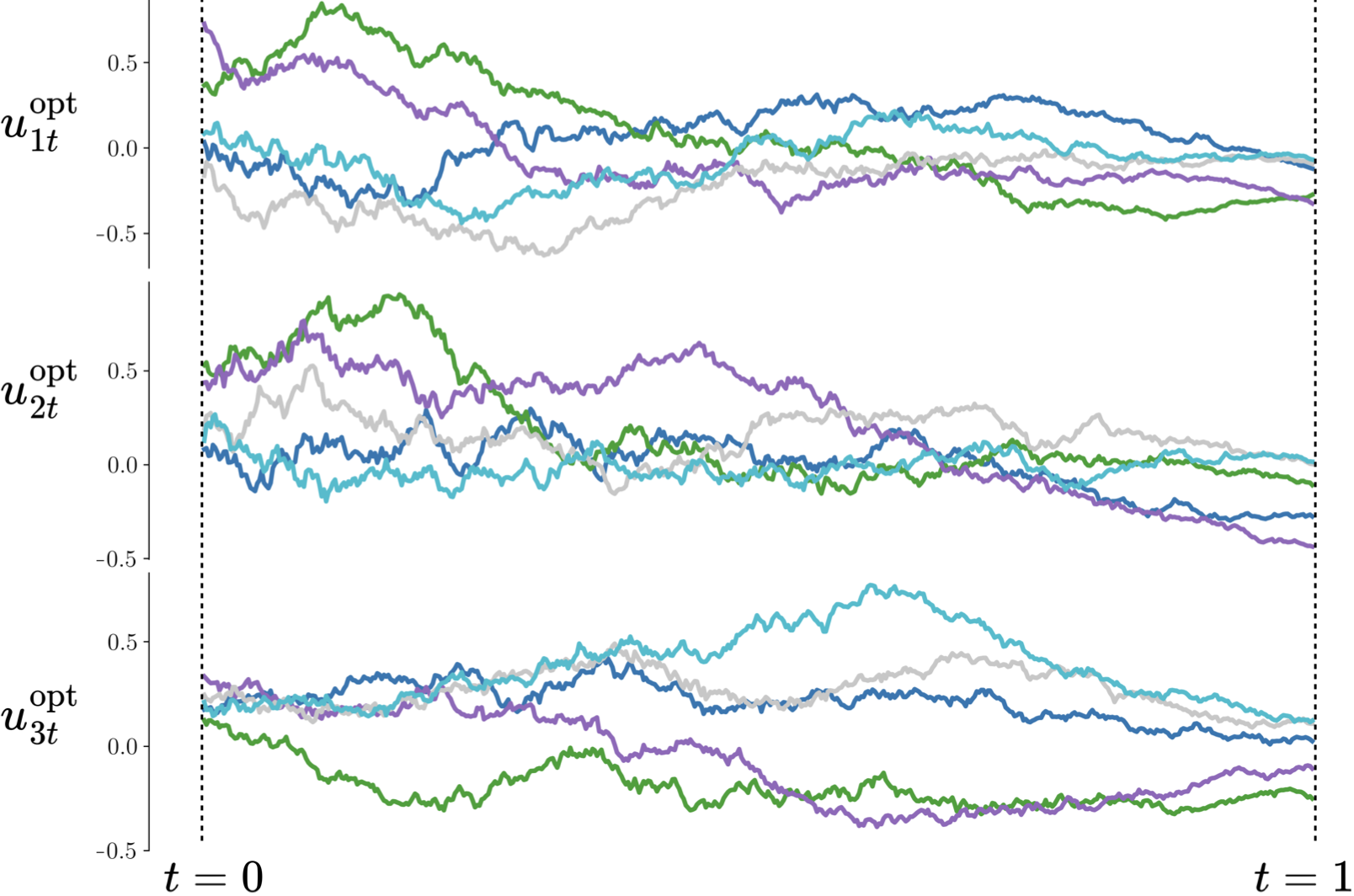}
    \caption{Sample paths for the optimal inputs (in m/s$^2$) corresponding to the state sample paths in Fig. \ref{CWcovariances}.}
    \label{fig:ControlEffortPlotCW}
\end{figure}

\subsection{Noisy Clohessy–Wiltshire model}\label{subsec:CW}

We next consider a close-proximity orbital rendezvous dynamics depicted in Fig. \ref{fig:CWschematic} where a service spacecraft (hereafter ``chaser") is dispatched\footnote{from a nearby parking orbit} to inspect a satellite (hereafter ``target") located in a circular Low-Earth Orbit (LEO) at an altitude of $415.137$ km.



As standard, we denote the relative 3D position vector of the chaser w.r.t. the target as $x\bm{i}_r + y\bm{i}_{\theta} + z\bm{i}_z$, where the unit vector $\bm{i}_r$ points in radially outward direction from the target satellite, $\bm{i}_{\theta}$ is along the direction of motion of the target satellite, and $\bm{i}_z$ is normal to the orbital plane. 

The noisy version of the Clohessy-Wiltshire equation \cite{clohessy1960terminal} models the linearized relative dynamics of the chaser w.r.t. the target. This model has $n=6$ states, $m=3$ inputs and noises, and is the following instance of \eqref{OCPdynamics}:
\begin{subequations}
\begin{align}
    \differential x_{1t} &= x_{2t}\:\differential t,\\
    \differential x_{2t} &= \left(3\nu^{2}x_{1t} + 2\nu x_{4t} + u_{1t}\right)\differential t + \differential w_{1t},\\
    \differential x_{3t} &= x_{4t}\:\differential t,\\
    \differential x_{4t} &= \left(-2\nu x_{2t} + u_{2t}\right)\differential t + \differential w_{2t},\\
    \differential x_{5t} &= x_{6t}\:\differential t,\\
    \differential x_{6t} &= \left(-\nu^{2} x_{5t} + u_{3t}\right)\differential t + \differential w_{3t},
\end{align}
\label{NoisyCW}    
\end{subequations}
where $(x_{1t},x_{3t},x_{5t})\equiv(x,y,z)$ are the relative position coordinates explained earlier, $(x_{2t},x_{4t},x_{6t})$ are the relative velocity coordinates, and $(u_{1t},u_{2t},u_{3t})$ are the thruster acceleration inputs. The standard Wiener processes $(w_{1t}, w_{2t}, w_{3t})$ in \eqref{NoisyCW} model process noise such as thruster actuation noise, solar radiation pressure, along the acceleration channels.

In \eqref{NoisyCW}, the parameter $\nu := \sqrt{\mu/a^{3}} = 1.1276\times10^{-3}$ rad/s is the constant orbital rate of the target, $\mu = 3.9860 \times 10^{14} \text{ m}^3/\text{s}^2$ is the standard gravitational parameter of the Earth, and $a = 6793.237$ Km is the radius of the circular LEO in which the target resides.

Let $$\bm{A}^{(1)} := \nu^2\left[3\bm{e}_{1}\vert\bm{0}_{3\times 1}\vert-\bm{e}_3\right],\quad \bm{A}^{(2)}:=\nu\left[-2\bm{e}_{2}\vert 2\bm{e}_{3}\vert\bm{0}_{3\times 1}\right],$$ where $\{\bm{e}_{i}\}_{i=1,2,3}$ are the standard basis column vectors in $\mathbb{R}^{3}$. It is easy to verify that the system matrix pair for \eqref{NoisyCW}: 
\begin{align*}
\left(\bm{A}_t,\bm{B}_t\right)\equiv\left(\begin{bmatrix}
\bm{0}_{3\times 3} & \bm{I}_{3\times 3}\\
\bm{A}^{(1)} & \bm{A}^{(2)}
\end{bmatrix},\begin{bmatrix}
\bm{0}_{3\times 3}\\
\bm{I}_{3\times 3}
\end{bmatrix}\right)\quad\forall t\in[t_0,t_1],    
\end{align*}
satisfies assumption \textbf{A1}.
 
Since covariance steering is decoupled from the mean steering (see Remark \ref{Remark:NonzeroMean}), we focus on steering the initial joint state PDF $\rho_0 = \mathcal{N}(\bm{0}, \bm{\Sigma}_0)$ close to the desired joint state PDF $\rho_d = \mathcal{N}(\bm{0}, \bm{\Sigma}_d)$. We fix $\bm{Q}=\bm{I}$, and use randomly generated positive definite 
{\small{
\begin{align*}
\bm{\Sigma}_0 &= 
\begin{bmatrix}
5.9148 & 3.8100 & 2.5815 & 2.1795 & 4.1628 & 1.9270 \\
3.8100 & 5.5664 & 2.8501 & 2.1819 & 3.8496 & 3.3638 \\
2.5815 & 2.8501 & 3.3834 & 1.5591 & 2.5389 & 2.3088 \\
2.1795 & 2.1819 & 1.5591 & 3.5850 & 2.6187 & 2.0098 \\
4.1628 & 3.8496 & 2.5389 & 2.6187 & 5.1285 & 2.5639 \\
1.9270 & 3.3638 & 2.3088 & 2.0098 & 2.5639 & 5.4354
\end{bmatrix}, \\[1em]
\bm{\Sigma}_d &= 
\begin{bmatrix}
1.6431 & 1.1138 & 1.5453 & 1.1729 & 1.2916 & 0.4077 \\
1.1138 & 1.9581 & 1.4418 & 1.0926 & 1.2408 & 0.4495 \\
1.5453 & 1.4418 & 3.9142 & 1.9928 & 2.0221 & 1.5553 \\
1.1729 & 1.0926 & 1.9928 & 2.1027 & 1.3448 & 0.9645 \\
1.2916 & 1.2408 & 2.0221 & 1.3448 & 1.7077 & 0.7830 \\
0.4077 & 0.4495 & 1.5553 & 0.9645 & 0.7830 & 1.5008
\end{bmatrix},
\end{align*}
}}
thereby satisfying assumptions \textbf{A2}-\textbf{A3}.

Fig. \ref{fig:RecursionPlotCW} shows convergence of the recursion $\bm{P}_{0}\mapsto \left(\bm{P}_{0}\right)_{\text{next}}$ proposed in Sec. \ref{sec:RecrsiveAlgorithm} with randomly initialized $\bm{P}_{0}\in\mathbb{S}^{6}$.

Fig. \ref{CWcovariances} plots the marginal $1\sigma$ covariance ellipsoids in the position (Fig. \ref{CWcovariances}(a)) and velocity (Fig. \ref{CWcovariances}(b)) coordinates\footnote{Recall that the covariance of the marginal of a Gaussian joint distribution is simply the corresponding submatrix of the original covariance matrix.}, along with $5$ closed-loop sample paths w.r.t. time $t\in[0,1]$. As in the previous example, the initial conditions for these $5$ paths are sampled from the six dimensional Gaussian $\rho_0$, and the paths are obtained by the Euler-Maruyama integration with the same step size as before. In Fig. \ref{CWcovariances}, the closed-loop position trajectories have more fluctuations than the closed-loop velocity trajectories, which is expected since \eqref{NoisyCW} is a degenerate It\^{o} diffusion.

In Fig. \ref{CWcovariances}, we indicate the directionality of time by spatially translating the centers of the covariance ellipsoids along the $y$ position and velocity coordinate, respectively. In each subfigure, an ellipsoid represents a specific covariance snapshot. 

The terminal covariance resulting from our controller is
{\small{
\begin{align*}
\bm{\Sigma}_{1} = \begin{bmatrix}
4.4809 & 3.1131 & 2.3911 & 0.4248 & 0.5287 & 0.0363 \\
3.1131 & 5.0291 & 3.0649 & 0.2636 & 0.4523 & 0.0130 \\
2.3911 & 3.0649 & 5.1735 & 1.3626 & 1.0944 & 1.2781 \\
0.4248 & 0.2636 & 1.3626 & 2.3281 & 1.2304 & 1.0207 \\
0.5287 & 0.4523 & 1.0944 & 1.2304 & 1.8847 & 0.8106 \\
0.0363 & 0.0130 & 1.2781 & 1.0207 & 0.8106 & 1.7013
\end{bmatrix}
\end{align*}
}}
\noindent which is close to the desired, as seen in Fig. \ref{CWcovariances}. 

Fig. \ref{fig:ControlEffortPlotCW} shows the optimal input sample paths corresponding to the closed-loop state sample paths in Fig. \ref{CWcovariances}. 

\section{Concluding Remarks}\label{sec:conclusions}
We formulated and solved the fixed horizon linear quadratic covariance steering problem in continuous time with a terminal cost measured in Hilbert-Schmidt (i.e., Frobenius) norm error between the desired and the controlled terminal covariances. Specifically, we derived the conditions of optimality, and transformed the resulting system of equations as a coupled system of Riccati-Riccati matrix ODEs. Using the LFTs associated with the solution maps for these ODEs and their properties, we proposed a novel matrix-valued nonlinear recursion on the space of symmetric matrices to solve the conditions of optimality. We then proved that the proposed recursion is convergent to its unique fixed point. We illustrated the computation for the optimal control and the optimally controlled covariance using two numerical examples: noisy double integrator and noisy Clohessy–Wiltshire model for close-proximity orbital rendezvous. Numerical results showed fast convergence in practice, demonstrating the practical feasibility of the proposed computation.

It is straightforward to generalize the problem formulation and the proposed solution when the terminal cost \eqref{DefTerminalCost} is weighted, i.e., of the form $\frac{1}{2}\|\bm{W}^{\frac{1}{2}}\left(\bm{\Sigma}_1-\bm{\Sigma}_{d}\right)\|_{\mathrm{Frobenius}}^{2}$ for a fixed positive definite weight matrix $\bm{W}$. Our developments also apply when the Gaussian PDFs $\rho_0,\rho_{d}$ have nonzero means, as explained in Remark \ref{Remark:NonzeroMean}. A more interesting research direction is to investigate if similar matricial recursive algorithms with convergence guarantees can be designed for other terminal costs such as the Bures-Wasserstein metric \cite{halder2016finite}, or the Fisher-Rao metric \cite[eq. (21)]{halder2018gradient} between the controlled and the desired terminal covariances. This will be pursued in our future work.


\appendix

\subsection{Auxiliary Lemmas}\label{App:LemmaF4bound}
The following lemmas are used in the proof of Theorem \ref{thm:mainresult}. The proof of Lemma \ref{Lemma:F4bound} and Lemma \ref{Lemma:NondegenWideBlocks} rely on the properties of the blocks of \eqref{STMBlockFormAtBoundary} listed in Lemma \ref{Lemma:STMblockproperties}.

\begin{lemma}[$\bm{F}_{4}$ is bounded]\label{Lemma:F4bound}
Given the state transition matrix \eqref{defSTMofM} associated with \eqref{DefHamiltonianMatrix}, consider the mapping $\bm{X}\to \bm{F}_{4}(\bm{X})$ given by \eqref{defF4} in variable $\bm{X}\in\mathbb{S}^{n}$. Assume that $\bm{X}\bm{\Phi}_{12} - \bm{\Phi}_{22}$ is invertible (i.e., $\bm{F}_{4}$ is well-defined). Then the mapping $\bm{F}_{4}$ is bounded in the sense
\begin{align}
\|{\mathrm{vec}}\left(\bm{F}_{4}(\bm{X})\right)\|_2 < \infty \quad\forall\bm{X}\in\mathbb{S}^{n},   
\end{align}
where ${\mathrm{vec}}$ denotes the vectorization operator.
\end{lemma}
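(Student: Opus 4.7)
The plan is to reduce $\bm{F}_{4}(\bm{X})$ to an explicit expression that is affine in $\bm{Y}^{-1}$ for a symmetric affine shift $\bm{Y}$ of $\bm{X}$, and then to read off finiteness of the Frobenius norm directly. The enabling observation is that $\bm{\Phi}_{12}$ is invertible by Lemma \ref{Lemma:STMblockproperties}, which lets me pull $\bm{\Phi}_{12}$ out of both the denominator and, after a change of variable, out of the numerator of the LFT \eqref{defF4}.

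Concretely, I first factor $\bm{X}\bm{\Phi}_{12} - \bm{\Phi}_{22} = (\bm{X} - \bm{\Phi}_{22}\bm{\Phi}_{12}^{-1})\bm{\Phi}_{12}$, and set $\bm{Y} := \bm{X} - \bm{\Phi}_{22}\bm{\Phi}_{12}^{-1}$. Identity \eqref{identity2} says $\bm{\Phi}_{22}\bm{\Phi}_{12}^{-1}$ is symmetric, so $\bm{Y}\in\mathbb{S}^{n}$ whenever $\bm{X}\in\mathbb{S}^{n}$; the hypothesized invertibility of $\bm{X}\bm{\Phi}_{12} - \bm{\Phi}_{22}$ together with invertibility of $\bm{\Phi}_{12}$ then forces $\bm{Y}$ to be invertible. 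Substituting $\bm{X} = \bm{Y} + \bm{\Phi}_{22}\bm{\Phi}_{12}^{-1}$ into the numerator gives
\[
\bm{\Phi}_{21} - \bm{X}\bm{\Phi}_{11} \;=\; -\bm{Y}\bm{\Phi}_{11} + \bigl(\bm{\Phi}_{21} - \bm{\Phi}_{22}\bm{\Phi}_{12}^{-1}\bm{\Phi}_{11}\bigr).
\]
Left-multiplying the parenthetic term by $\bm{\Phi}_{12}^{\top}$ and applying \eqref{identity2} followed by the transpose of \eqref{identity1} collapses it to $-\bm{I}$, which in turn yields $\bm{\Phi}_{21} - \bm{\Phi}_{22}\bm{\Phi}_{12}^{-1}\bm{\Phi}_{11} = -\bm{\Phi}_{12}^{-\top}$.

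Assembling the pieces produces the closed-form representation
\[
\bm{F}_{4}(\bm{X}) \;=\; -\bm{\Phi}_{12}^{-1}\bm{\Phi}_{11} \;-\; \bm{\Phi}_{12}^{-1}\bm{Y}^{-1}\bm{\Phi}_{12}^{-\top},
\]
in which the sole $\bm{X}$-dependent factor is $\bm{Y}^{-1}$. Under the standing invertibility hypothesis $\bm{Y}^{-1}$ is a well-defined real matrix with finite entries, and $\bm{\Phi}_{11}, \bm{\Phi}_{12}^{-1}, \bm{\Phi}_{12}^{-\top}$ are fixed matrices with finite Frobenius norm. Submultiplicativity of the Frobenius norm together with the identity $\|\mathrm{vec}(\bm{M})\|_{2} = \|\bm{M}\|_{\mathrm{Frobenius}}$ then yields $\|\mathrm{vec}(\bm{F}_{4}(\bm{X}))\|_{2} < \infty$, as claimed.

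The main obstacle is recognizing the correct simplification: neither the factor $(\bm{X}\bm{\Phi}_{12}-\bm{\Phi}_{22})^{-1}$ nor the factor $(\bm{\Phi}_{21}-\bm{X}\bm{\Phi}_{11})$ is, on its own, transparent in $\bm{X}$, so one has to unravel their joint structure. The crucial identity $\bm{\Phi}_{21} - \bm{\Phi}_{22}\bm{\Phi}_{12}^{-1}\bm{\Phi}_{11} = -\bm{\Phi}_{12}^{-\top}$ is the nonobvious step and is a direct manifestation of the symplectic identities \eqref{identity1}--\eqref{identity2}; once it is in hand the rest of the argument is algebraic book-keeping.
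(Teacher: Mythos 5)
Your proof is correct, and it takes a genuinely different route from the paper's. The paper vectorizes $\bm{F}_{4}(\bm{X})$ via ${\mathrm{vec}}(\bm{PQR})=(\bm{R}^{\top}\otimes\bm{P}){\mathrm{vec}}(\bm{Q})$, peels off $\bm{\Phi}_{11}^{\top}\otimes\bm{\Phi}_{12}^{-1}$, and then controls the remaining factor through the identity $\bm{X}-\bm{\Phi}_{22}\bm{\Phi}_{12}^{-1}=-(\bm{Y}+\bm{\Gamma})$ with $\bm{Y}=\bm{\Phi}_{21}\bm{\Phi}_{11}^{-1}-\bm{X}$, $\bm{\Gamma}=(\bm{\Phi}_{12}\bm{\Phi}_{11}^{\top})^{-1}$, ending with a singular-value ratio $\sigma_{\max}(\bm{Y})/\sigma_{\min}(\bm{Y}+\bm{\Gamma})$ made positive via Weyl's inequality and the invertibility hypothesis. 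You instead factor the denominator as $(\bm{X}-\bm{\Phi}_{22}\bm{\Phi}_{12}^{-1})\bm{\Phi}_{12}$ and exhibit the closed form $\bm{F}_{4}(\bm{X})=-\bm{\Phi}_{12}^{-1}\bm{\Phi}_{11}-\bm{\Phi}_{12}^{-1}\bm{Y}^{-1}\bm{\Phi}_{12}^{-\top}$; your key identity $\bm{\Phi}_{21}-\bm{\Phi}_{22}\bm{\Phi}_{12}^{-1}\bm{\Phi}_{11}=-\bm{\Phi}_{12}^{-\top}$ checks out (left-multiply by $\bm{\Phi}_{12}^{\top}$, use \eqref{identity2} and the transpose of \eqref{identity1}), and it is essentially the same symplectic fact the paper records as \eqref{intermedTransposition}, just deployed differently. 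Both arguments prove exactly the stated pointwise finiteness (which, given the well-definedness hypothesis, is the honest content of the lemma); your version buys an explicit affine-in-$\bm{Y}^{-1}$ representation that makes the singular set and the blow-up of $\bm{F}_{4}$ near $\bm{Y}=\bm{0}$ completely transparent, while the paper's version buys a quantitative norm estimate \eqref{BoundIntermed} in terms of the data. One small point worth making explicit if you polish this up: the symmetry of $\bm{Y}$ is a pleasant sanity check but is not needed for the conclusion — only its invertibility, which you correctly deduce from the hypothesis and the invertibility of $\bm{\Phi}_{12}$.
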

\begin{proof}
For matrices $\bm{P},\bm{Q},\bm{R}$ of suitable sizes, recall the identity
\begin{align}
{\mathrm{vec}}\left(\bm{PQR}\right)=\left(\bm{R}^{\top}\otimes\bm{P}\right){\mathrm{vec}}\left(\bm{Q}\right).
\label{vecTripleProduct}  
\end{align}
Applying \eqref{vecTripleProduct}, we have
\begin{align}
&{\mathrm{vec}}\left(\bm{F}_{4}(\bm{X})\right)\nonumber\\
=& \left(\left(\bm{\Phi}_{21}-\bm{X} \bm{\Phi}_{11}\right)^{\top} \otimes\left(\bm{X} \bm{\Phi}_{12}-\bm{\Phi}_{22}\right)^{-1}\right) \operatorname{vec}\left(\bm{I}\right), 
\label{vec2kron}    
\end{align}
and therefore,
\begin{align}
&\|{\mathrm{vec}}\left(\bm{F}_{4}(\bm{X})\right)\|_2\nonumber\\
\leq &\sqrt{n}\|\left(\bm{\Phi}_{21}-\bm{X} \bm{\Phi}_{11}\right)^{\top} \otimes\left(\bm{X} \bm{\Phi}_{12}-\bm{\Phi}_{22}\right)^{-1}\|_2.
\label{vec2norm}    
\end{align}

From Lemma \ref{Lemma:STMblockproperties}, $\bm{\Phi}_{11},\bm{\Phi}_{12}$ are invertible, and we can write
\begin{align}
&\left(\bm{\Phi}_{21}-\bm{X} \bm{\Phi}_{11}\right)^{\top} \otimes\left(\bm{X} \bm{\Phi}_{12}-\bm{\Phi}_{22}\right)^{-1}\nonumber\\
=& \left(\bm{\Phi}_{11}^{\top} \otimes \bm{\Phi}_{12}^{-1}\right)\bigg\{\!\!\left(\left(\bm{\Phi}_{21} \bm{\Phi}_{11}^{-1}\right)^{\top}-\bm{X}\right) \otimes\left(\bm{X}-\bm{\Phi}_{22} \bm{\Phi}_{12}^{-1}\right)\!\!\bigg\}.
\label{PeelingOff}
\end{align}
Combining \eqref{vec2norm} with \eqref{PeelingOff}, using the sub-multiplicativity of
the induced $2$ norm, and that the norm of a Kronecker product is equal to the product of the norms, we get
\begin{align}
&\|{\mathrm{vec}}\left(\bm{F}_{4}(\bm{X})\right)\|_2
\leq\sqrt{n}\:\|\bm{\Phi}_{11}\|_{2} \|\bm{\Phi}_{12}^{-1}\|_{2}\|\bm{\Phi}_{21}\bm{\Phi}_{11}^{-1}-\bm{X}\|_2 \nonumber\\
&\qquad\qquad\qquad\qquad\qquad\|\left(\bm{X}-\bm{\Phi}_{22}\bm{\Phi}_{12}^{-1}\right)^{-1}\|_2, 
\label{BoundInitial}    
\end{align}
where we have dropped the transpose from $\bm{\Phi}_{21}\bm{\Phi}_{11}^{-1}$ since it is symmetric (from \eqref{identity3} in Lemma \ref{Lemma:STMblockproperties}).

Recall again from Lemma \ref{Lemma:STMblockproperties} that both $\bm{\Phi}_{11},\bm{\Phi}_{12}$ are invertible. Post-multiplying both sides of \eqref{identity1} in Lemma \ref{Lemma:STMblockproperties} with $\bm{\Phi}_{12}^{-1}$, and pre-multiplying the same with $\bm{\Phi}_{11}^{-\top}$, gives
$$\bm{\Phi}_{22}\bm{\Phi}_{12}^{-1}-\bm{\Phi}_{11}^{-\top}\bm{\Phi}_{21}^{\top}=\bm{\Phi}_{11}^{-\top}\bm{\Phi}_{12}^{-1}.$$
Transposing both sides of the above, we find
\begin{align}
\bm{\Phi}_{22}\bm{\Phi}_{12}^{-1}-\bm{\Phi}_{21}\bm{\Phi}_{11}^{-1} = \left(\bm{\Phi}_{12}\bm{\Phi}_{11}^{\!\top}\right)^{-1},
\label{intermedTransposition}    
\end{align}
where we used that $\bm{\Phi}_{22}\bm{\Phi}_{12}^{-1}$ is symmetric (from \eqref{identity2} in Lemma \ref{Lemma:STMblockproperties}), that $\bm{\Phi}_{21}\bm{\Phi}_{11}^{-1}$ is symmetric (from \eqref{identity3} in Lemma \ref{Lemma:STMblockproperties}), and that $\bm{\Phi}_{12}\bm{\Phi}_{11}^{\top}$ is symmetric (from \eqref{identity5} in Lemma \ref{Lemma:STMblockproperties}). From \eqref{intermedTransposition}, we then have
\begin{align}
\bm{X}-\bm{\Phi}_{22}\bm{\Phi}_{12}^{-1} &= - \left(\bm{\Phi}_{21}\bm{\Phi}_{11}^{-1}-\bm{X}\right) - \left(\bm{\Phi}_{12}\bm{\Phi}_{11}^{\!\top}\right)^{-1}\nonumber\\
&= - \bm{Y} - \bm{\Gamma},
\label{InverseFactor}    
\end{align}
where $\bm{Y}:= \bm{\Phi}_{21}\bm{\Phi}_{11}^{-1}-\bm{X}$, and $\bm{\Gamma}:= \left(\bm{\Phi}_{12}\bm{\Phi}_{11}^{\!\top}\right)^{-1}$. Notice that both $\bm{Y}$ and $\bm{\Gamma}$ are symmetric. Using \eqref{InverseFactor}, we rewrite \eqref{BoundInitial} as
\begin{align}
{\|\mathrm{vec}}\left(\bm{F}_{4}(\bm{X})\right)\|_2
\leq \sqrt{n}\:\|\bm{\Phi}_{11}\|_{2} \:\|\bm{\Phi}_{12}^{-1}\|_{2}\:\dfrac{\sigma_{\max}\left(\bm{Y}\right)}{\sigma_{\min}\left(\bm{Y}+\bm{\Gamma}\right)}, 
\label{BoundIntermed}    
\end{align}
where $\sigma_{\max},\sigma_{\min}$ denote the maximum and the minimum singular value, respectively.

From Weyl's singular value inequality:
\begin{align}
\vert\sigma_{\min}(\bm{Y}+\bm{\Gamma})-\sigma_{\min}(\bm{\Gamma})\vert \leq \sigma_{\max}(\bm{Y}),
\label{WeylIneq}    
\end{align}
we have $\sigma_{\min}(\bm{Y}+\bm{\Gamma}) \geq \max\{0,\sigma_{\min}(\bm{\Gamma}) - \sigma_{\max}(\bm{Y})\}$. On the other hand, since $\bm{X}\bm{\Phi}_{12} - \bm{\Phi}_{22}$ is invertible (per assumption), so is $\bm{X}-\bm{\Phi}_{22}\bm{\Phi}_{12}^{-1}$. Hence from \eqref{InverseFactor}, the symmetric matrix $\bm{Y}+\bm{\Gamma}$ is invertible, and $\sigma_{\min}\left(\bm{Y}+\bm{\Gamma}\right) > 0$. Putting these together with \eqref{BoundIntermed}, the statement follows since the blocks of \eqref{STMBlockFormAtBoundary} have finite norms.
\end{proof}

\begin{lemma}[$\bm{F}_{3}$ is nonexpansive]\label{Lemma:F3isNonexpansive}
For fixed $\bm{\Sigma}_{d}\succ\bm{0}$, the map $\bm{X}\to\bm{F}_{3}(\bm{X})$ given by \eqref{defF3} is nonexpansive (i.e., 1-Lipschitz) for $\bm{X}\in\mathbb{S}^{n}$.
\end{lemma}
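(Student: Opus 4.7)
The plan is to reduce the matrix nonexpansiveness of $\bm{F}_3$ to a one-dimensional Lipschitz estimate, then lift via the symmetric functional calculus. I would first introduce the change of variable $\bm{Z} := (\bm{X}-\bm{\Sigma}_d)/2 \in \mathbb{S}^n$, so that $(\bm{X}+\bm{\Sigma}_d)/2 = \bm{Z}+\bm{\Sigma}_d$, and rewrite the definition \eqref{defF3} as
\begin{equation*}
\bm{F}_3(\bm{X}) = \phi(\bm{Z}) - \bm{\Sigma}_d, \qquad \phi(\bm{Z}) := -\bm{Z} + \left(\bm{Z}^2+\bm{I}\right)^{1/2},
\end{equation*}
where $\phi$ is the matricial lift of the scalar map $\phi(y) = -y + \sqrt{y^2+1}$ via the spectral calculus: for $\bm{Z} = \bm{Q}\,\mathrm{diag}(\lambda_i)\,\bm{Q}^\top$, one has $\phi(\bm{Z}) = \bm{Q}\,\mathrm{diag}(\phi(\lambda_i))\,\bm{Q}^\top$. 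Since $\bm{Z}_1 - \bm{Z}_2 = (\bm{X}_1-\bm{X}_2)/2$, proving nonexpansiveness of $\bm{F}_3$ in Frobenius norm reduces to showing $\phi$ is $2$-Lipschitz on $\mathbb{S}^n$.

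For the scalar Lipschitz bound, a direct differentiation yields $\phi'(y) = -1 + y/\sqrt{y^2+1}$, and since $|y/\sqrt{y^2+1}| \leq 1$ for all $y\in\mathbb{R}$, one has $\phi'(y) \in [-2,0]$, so $\sup_y |\phi'(y)| \leq 2$.

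The main step is lifting this scalar bound to a Frobenius-norm bound on $\mathbb{S}^n$, for which I would invoke the classical operator Lipschitz inequality: any $C^1$ scalar function $f$ with $\sup_y|f'(y)| = L$ satisfies $\|f(\bm{A})-f(\bm{B})\|_{\mathrm{Frobenius}} \leq L\,\|\bm{A}-\bm{B}\|_{\mathrm{Frobenius}}$ for all $\bm{A},\bm{B}\in\mathbb{S}^n$ (cf.\ Bhatia, \emph{Matrix Analysis}, Thm.\ V.3.1). The underlying mechanism is the Daletskii--Krein representation of the Fréchet derivative $Df(\bm{X})[\bm{H}] = \bm{Q}(\bm{K}\odot(\bm{Q}^\top \bm{H}\bm{Q}))\bm{Q}^\top$ at $\bm{X} = \bm{Q}\,\mathrm{diag}(\lambda_i)\,\bm{Q}^\top$, where the divided-difference matrix $\bm{K}_{ij} = (f(\lambda_i)-f(\lambda_j))/(\lambda_i-\lambda_j)$ (with $\bm{K}_{ii} = f'(\lambda_i)$) satisfies $|\bm{K}_{ij}|\leq L$ by the mean value theorem. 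Combining the Hadamard bound $\|\bm{K}\odot\bm{M}\|_{\mathrm{Frobenius}} \leq \max_{i,j}|\bm{K}_{ij}|\cdot\|\bm{M}\|_{\mathrm{Frobenius}}$ with orthogonality of $\bm{Q}$ yields $\|Df(\bm{X})[\bm{H}]\|_{\mathrm{Frobenius}} \leq L\|\bm{H}\|_{\mathrm{Frobenius}}$; integrating along the straight-line segment $t\mapsto \bm{A}+t(\bm{B}-\bm{A})$ then completes the lift.

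Applying this with $f=\phi$ and $L=2$, I conclude $\|\bm{F}_3(\bm{X}_1)-\bm{F}_3(\bm{X}_2)\|_{\mathrm{Frobenius}} = \|\phi(\bm{Z}_1)-\phi(\bm{Z}_2)\|_{\mathrm{Frobenius}} \leq 2\|\bm{Z}_1-\bm{Z}_2\|_{\mathrm{Frobenius}} = \|\bm{X}_1-\bm{X}_2\|_{\mathrm{Frobenius}}$, establishing nonexpansiveness. The main technical subtlety is the scalar-to-matrix Lipschitz lift: because $\bm{X}$ and $\bm{\Sigma}_d$ need not commute, one cannot directly diagonalize \eqref{defF3} eigenvalue-wise in $\bm{X}$, so the initial substitution isolating the symmetric argument $\bm{Z}$ before invoking the functional-calculus Lipschitz inequality is essential.
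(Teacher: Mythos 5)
Your proof is correct and follows essentially the same route as the paper's: both isolate the symmetric argument $\bm{Z}=(\bm{X}-\bm{\Sigma}_d)/2$ (the paper does this by writing $\bm{F}_3$ as a composition of an affine map, the primitive $-\bm{X}+(\bm{X}^2+\bm{I})^{1/2}$, and another affine shift, then multiplying Lipschitz constants $\tfrac{1}{2}\cdot 2\cdot 1$), and both lift the scalar bound $\sup_y\lvert -1+y/\sqrt{y^2+1}\rvert\le 2$ to the matrix setting via the Daletskii--Krein divided-difference representation and the entrywise Hadamard bound. The only differences are organizational (substitution versus explicit composition), so nothing further is needed.
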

\begin{proof}
Let ${\mathrm{Lip}}$ denote the Lipchitz constant. Writing $\bm{F}_{3} = \bm{F}_{33}\circ\bm{F}_{32}\circ\bm{F}_{31}$, where
\begin{subequations}
\begin{align}
\bm{F}_{31}(\bm{X})&:= \dfrac{\bm{X}-\bm{\Sigma}_{d}}{2},\\
\bm{F}_{32}(\bm{X})&:= -\bm{X} + \left(\bm{X}^{2}+\bm{I}\right)^{\frac{1}{2}},\\
\bm{F}_{33}(\bm{X})&:=\bm{X}-\bm{\Sigma}_{d},
\end{align}
\label{DecomposingF3}   
\end{subequations}
we note that 
\begin{align}
{\mathrm{Lip}}\left(\bm{F}_{3}\right)=\underbrace{{\mathrm{Lip}}\left(\bm{F}_{31}\right)}_{=1/2}\cdot{\mathrm{Lip}}\left(\bm{F}_{32}\right)\cdot\underbrace{{\mathrm{Lip}}\left(\bm{F}_{33}\right)}_{=1}.
\label{ProductOfLip}    
\end{align}
Since $\bm{F}_{32}:\mathbb{S}^{n}\to\mathbb{S}^{n}_{++}$ is continuously differentiable,
\begin{align}
{\mathrm{Lip}}\left(\bm{F}_{32}\right) \leq \underset{\bm{X}\in\mathbb{S}^{n}}{\sup}\|{\mathrm{D}}\bm{F}_{32}(\bm{X})\|_{2}.
\label{LipOfF32}    
\end{align}
As $\bm{X}\in\mathbb{S}^{n}$, its spectral decomposition $\bm{X}=\bm{U}{\mathrm{diag}}(\lambda_i)\bm{U}^{\top}$ where $\lambda_i\in\mathbb{R}$ are the eigenvalues of $\bm{X}$ and $\bm{U}$ is orthogonal, implies that $\bm{F}_{32}(\bm{X}) = \bm{U}{\mathrm{diag}}(\psi(\lambda_i))\bm{U}^{\top}$, where 
\begin{align}
\psi(\lambda):=-\lambda+\sqrt{\lambda^2 + 1}, \quad\lambda\in\mathbb{R}.
\label{defpsi}    
\end{align}
By Daletskii-Krein theorem \cite{daletskii1965integration},
\begin{align}
\bm{U}^{\top}\left({\mathrm{d}}\bm{F}_{32}\right)\bm{U} = \bm{\Psi} \odot \left(\bm{U}^{\top}\differential\bm{X}\bm{U}\right),
\label{DKtheorem}
\end{align}
where $\odot$ denotes the Hadamard product, the matrix $\bm{\Psi}$ has entries
\begin{align}
\Psi_{ij} := \begin{cases}
\dfrac{\psi(\lambda_i) - \psi(\lambda_{j})}{\lambda_i - \lambda_j} & \text{if}\quad \lambda_i \neq \lambda_{j},\\
\psi^{\prime}(\lambda_i) & \text{if}\quad \lambda_i = \lambda_{j},
\end{cases}
\label{defPsiMatrix}    
\end{align}
and $^{\prime}$ denotes derivative.

Applying ${\mathrm{vec}}$ to both sides of \eqref{DKtheorem}, and then using the identity \eqref{vecTripleProduct} and ${\mathrm{vec}}(\bm{A}\odot\bm{B})={\mathrm{vec}}(\bm{A})\odot{\mathrm{vec}}(\bm{B})$, we find
\begin{align}
&\left(\bm{U}^{\top}\otimes\bm{U}^{\top}\right){\mathrm{vec}}(\differential\bm{F}_{32})
={\mathrm{vec}}(\bm{\Psi})\odot\left(\bm{U}^{\top}\otimes\bm{U}^{\top}\right){\mathrm{vec}}(\differential\bm{X})\nonumber\\
&\Rightarrow{\mathrm{vec}}(\differential\bm{F}_{32}) =\left(\bm{U}\otimes\bm{U}\right){\mathrm{diag}}\left({\mathrm{vec}}(\bm{\Psi})\right)\left(\bm{U}^{\top}\otimes\bm{U}^{\top}\right)\nonumber\\
&\hspace*{2.3in}{\mathrm{vec}}(\differential\bm{X}).
\label{vecdF32}    
\end{align}
From \eqref{vecdF32}, we identify the Jacobian
\begin{align}
{\mathrm{D}}\bm{F}_{32}(\bm{X}) = \left(\bm{U}\otimes\bm{U}\right){\mathrm{diag}}\left({\mathrm{vec}}(\bm{\Psi})\right)\left(\bm{U}^{\top}\otimes\bm{U}^{\top}\right),
\label{IdentifyJacobianOfF32}    
\end{align}
and therefore, by submultiplicativity of $\|\cdot\|_2$, we obtain
\begin{align}
\|{\mathrm{D}}\bm{F}_{32}(\bm{X})\|_2 \leq \|{\mathrm{diag}}\left({\mathrm{vec}}(\bm{\Psi})\right)\|_2 = \underset{i,j}{\max}\:\vert\Psi_{ij}\vert.
\label{2normofJacobian}    
\end{align}
For any $\lambda_i,\lambda_j$ lying in an interval $I\subset\mathbb{R}$, by the mean value theorem, there exists $\theta\in [\min\{\lambda_i,\lambda_j\},\max\{\lambda_i,\lambda_j\}]\subset I$ such that $\Psi_{ij}=\psi^{\prime}(\theta)$. Hence
\begin{align}
\vert\Psi_{ij}\vert \leq \underset{\theta\in\mathbb{R}}{\sup}\:\vert\psi^{\prime}(\theta)\vert.
\label{UpperBoundAbsPsiij}    
\end{align}
\begin{figure}[t] 
    \centering
    \includegraphics[width=0.9\linewidth]{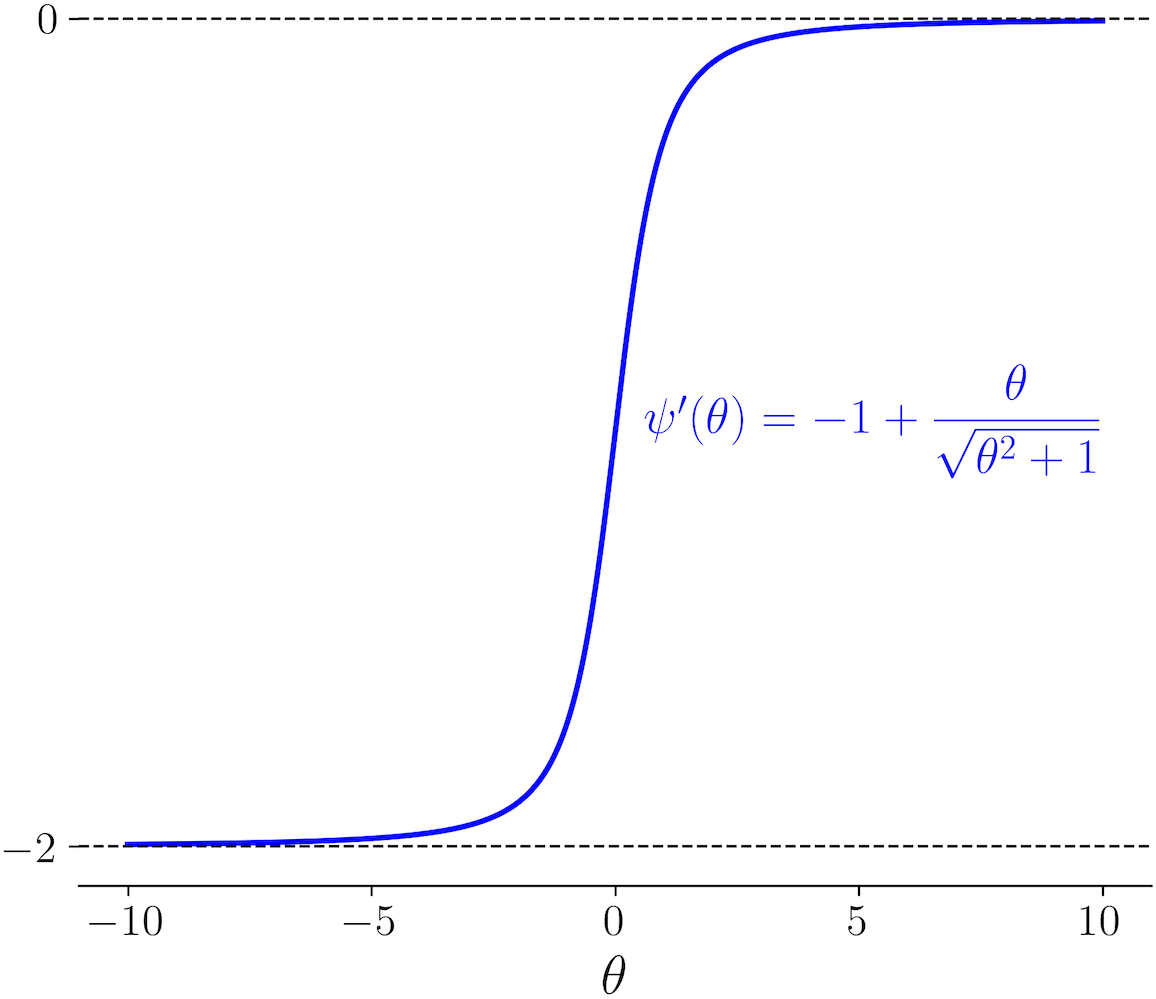}
    \caption{The function $\psi^{\prime}(\cdot)$ used in the proof of Lemma \ref{Lemma:F3isNonexpansive}.}
    \label{fig:psiprime}
\end{figure}
From \eqref{defpsi}, notice that $\psi^{\prime}(\theta) = -1 + \dfrac{\theta}{\sqrt{\theta^2 + 1}}$ is negative and strictly increasing\footnote{$\psi^{\prime\prime}(\theta) = (\theta^2 + 1)^{-3/2}>0$ $\forall\theta\in\mathbb{R}$.} over $\mathbb{R}$ (see Fig. \ref{fig:psiprime}). In particular, $\underset{\theta\in\mathbb{R}}{\sup}\:\vert\psi^{\prime}(\theta)\vert = \lim_{\theta\rightarrow-\infty}\vert\psi^{\prime}(\theta)\vert = \vert\lim_{\theta\rightarrow-\infty}\psi^{\prime}(\theta)\vert = \vert-2\vert = 2$, which combined with \eqref{2normofJacobian}-\eqref{UpperBoundAbsPsiij} yields the uniform bound 
\begin{align}
\|{\mathrm{D}}\bm{F}_{32}(\bm{X})\|_2 \leq 2.
\label{2normOfJacUpperBoundedBy2}    
\end{align}
Then, \eqref{2normOfJacUpperBoundedBy2} and \eqref{LipOfF32} together implies ${\mathrm{Lip}}\left(\bm{F}_{32}\right) \leq 2$, which further combined with \eqref{ProductOfLip}, gives
$${\mathrm{Lip}}\left(\bm{F}_{3}\right) \leq 1.$$
This completes the proof.
\end{proof}

\begin{definition}(Nondegenerate wide matrix)\label{def:NondegerateMatrix}
Given $\bm{R},\bm{S}\in\mathbb{R}^{n\times n}$, the wide matrix $\left[\bm{R},\bm{S}\right]\in\mathbb{R}^{n\times 2n}$ is said to be \emph{nondegenerate} if it has the maximal possible rank $n$. 
\end{definition}
\begin{lemma}[Nondegenerate wide blocks of \eqref{STMBlockFormAtBoundary}]\label{Lemma:NondegenWideBlocks}
Consider the state transition matrix \eqref{STMBlockFormAtBoundary} for the coefficient matrix \eqref{DefHamiltonianMatrix}. The wide matrices 
$$\left[\bm{\Phi}_{11}^{\top}, -\bm{\Phi}_{12}^{\top}\right], \; \left[\bm{\Phi}_{22}, -\bm{\Phi}_{12}\right]\in\mathbb{R}^{n\times 2n}$$
are both nondegenerate (see Definition \ref{def:NondegerateMatrix}).
\end{lemma}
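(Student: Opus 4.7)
The plan is to deduce both claims directly from Lemma \ref{Lemma:STMblockproperties}, which already asserts that the blocks $\bm{\Phi}_{11}$ and $\bm{\Phi}_{12}$ of the Hamiltonian state transition matrix are invertible. Since a wide $n\times 2n$ matrix has rank at most $n$, and since its rank is at least the rank of either of its two $n\times n$ blocks, it suffices to exhibit one invertible block inside each of the two wide matrices of interest.

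First I would handle $[\bm{\Phi}_{11}^{\top},-\bm{\Phi}_{12}^{\top}]$. The invertibility of $\bm{\Phi}_{11}$ (from Lemma \ref{Lemma:STMblockproperties}) immediately implies invertibility of $\bm{\Phi}_{11}^{\top}$, so the leftmost $n\times n$ block has rank $n$. Hence
\begin{equation*}
n = \operatorname{rank}\left(\bm{\Phi}_{11}^{\top}\right) \leq \operatorname{rank}\left(\left[\bm{\Phi}_{11}^{\top},\,-\bm{\Phi}_{12}^{\top}\right]\right) \leq n,
\end{equation*}
so the rank equals $n$ and nondegeneracy follows from Definition \ref{def:NondegerateMatrix}. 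An identical argument works for $[\bm{\Phi}_{22},-\bm{\Phi}_{12}]$: Lemma \ref{Lemma:STMblockproperties} gives that $\bm{\Phi}_{12}$, and therefore $-\bm{\Phi}_{12}$, is invertible, so the rightmost $n\times n$ block already supplies full row rank $n$, yielding nondegeneracy.

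There is essentially no obstacle here; the statement is a direct corollary of the invertibility assertions in Lemma \ref{Lemma:STMblockproperties}, and the only work is to note that existence of a single invertible $n\times n$ sub-block forces a wide $n\times 2n$ matrix to attain the maximal rank $n$. The lemma is stated as an auxiliary fact precisely because it is invoked in the proof of Theorem \ref{thm:mainresult} to apply \cite[Thm.\ 1]{potapov1988linear} when reasoning about density of ``good'' initial conditions for the LFTs \eqref{defF2} and \eqref{defF4}.
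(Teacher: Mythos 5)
Your proof is correct. For the first wide matrix $\left[\bm{\Phi}_{11}^{\top},-\bm{\Phi}_{12}^{\top}\right]$ your argument coincides with the paper's: invertibility of $\bm{\Phi}_{11}$ (Lemma \ref{Lemma:STMblockproperties}) already forces full row rank. For the second wide matrix $\left[\bm{\Phi}_{22},-\bm{\Phi}_{12}\right]$ you take a slightly different and in fact cleaner route: you observe that the rightmost block $-\bm{\Phi}_{12}$ is itself invertible, so the same one-line "an invertible $n\times n$ sub-block forces rank $n$" argument applies uniformly to both matrices. The paper instead characterizes the nullspace of $\left[\bm{\Phi}_{22},-\bm{\Phi}_{12}\right]$ (showing $\bm{x}_2=\bm{\Phi}_{12}^{-1}\bm{\Phi}_{22}\bm{x}_1$, so the nullspace has dimension $n$) and invokes rank--nullity to conclude the rank is $2n-n=n$. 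Both arguments rest on the same ingredient, namely the invertibility of $\bm{\Phi}_{12}$ from Lemma \ref{Lemma:STMblockproperties}; yours avoids the detour through the nullspace and is the more economical of the two.
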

\begin{proof}
Any $n\times 2n$ matrix has rank $\leq \min\{n,2n\}=n$. For the matrix $\left[\bm{\Phi}_{11}^{\top}, -\bm{\Phi}_{12}^{\top}\right]$, equality is achieved because from Lemma \ref{Lemma:STMblockproperties}, both $\bm{\Phi}_{11}, \bm{\Phi}_{12}$, and thus both $\bm{\Phi}_{11}^{\top}, -\bm{\Phi}_{12}^{\top}$ have full ranks.

To see that the aforesaid equality is also achieved for the matrix $\left[\bm{\Phi}_{22}, -\bm{\Phi}_{12}\right]$, consider any $\bm{x}=\begin{pmatrix}
    \bm{x}_1\\
    \bm{x}_2
\end{pmatrix}$ where $\bm{x}_{1},\bm{x}_2\in\mathbb{R}^{n}$, such that $\left[\bm{\Phi}_{22}, -\bm{\Phi}_{12}\right]\bm{x}=\bm{0}$. Then $\bm{x}_{2} = \bm{\Phi}_{12}^{-1}\bm{\Phi}_{22}\bm{x}_{1}$ (recall that $\bm{\Phi}_{12}$ is invertible by Lemma \ref{Lemma:STMblockproperties}). In other words, this matrix has a nullspace of dimension $n$ (parameterized by $\bm{x}_1$), and by the rank-nullity theorem \cite[p. 6]{horn2012matrix}, ${\mathrm{rank}}\left[\bm{\Phi}_{22}, -\bm{\Phi}_{12}\right]=2n-n=n$.
\end{proof}


\section*{References}

\bibliographystyle{IEEEtran}
\bibliography{References.bib}

\begin{thebibliography}{10}
\providecommand{\url}[1]{#1}
\csname url@samestyle\endcsname
\providecommand{\newblock}{\relax}
\providecommand{\bibinfo}[2]{#2}
\providecommand{\BIBentrySTDinterwordspacing}{\spaceskip=0pt\relax}
\providecommand{\BIBentryALTinterwordstretchfactor}{4}
\providecommand{\BIBentryALTinterwordspacing}{\spaceskip=\fontdimen2\font plus
\BIBentryALTinterwordstretchfactor\fontdimen3\font minus \fontdimen4\font\relax}
\providecommand{\BIBforeignlanguage}[2]{{%
\expandafter\ifx\csname l@#1\endcsname\relax
\typeout{** WARNING: IEEEtran.bst: No hyphenation pattern has been}%
\typeout{** loaded for the language `#1'. Using the pattern for}%
\typeout{** the default language instead.}%
\else
\language=\csname l@#1\endcsname
\fi
#2}}
\providecommand{\BIBdecl}{\relax}
\BIBdecl

\bibitem{halder2016finite}
A.~Halder and E.~D. Wendel, ``Finite horizon linear quadratic {G}aussian density regulator with {W}asserstein terminal cost,'' in \emph{2016 American Control Conference (ACC)}.\hskip 1em plus 0.5em minus 0.4em\relax IEEE, 2016, pp. 7249--7254.

\bibitem{bhatia2019bures}
R.~Bhatia, T.~Jain, and Y.~Lim, ``On the {B}ures--{W}asserstein distance between positive definite matrices,'' \emph{Expositiones Mathematicae}, vol.~37, no.~2, pp. 165--191, 2019.

\bibitem{givens1984class}
C.~R. Givens and R.~M. Shortt, ``A class of {W}asserstein metrics for probability distributions.'' \emph{Michigan Mathematical Journal}, vol.~31, no.~2, pp. 231--240, 1984.

\bibitem{gelbrich1990formula}
M.~Gelbrich, ``On a formula for the {L}2 {W}asserstein metric between measures on {E}uclidean and {H}ilbert spaces,'' \emph{Mathematische Nachrichten}, vol. 147, no.~1, pp. 185--203, 1990.

\bibitem{hotz1987covariance}
A.~Hotz and R.~E. Skelton, ``Covariance control theory,'' \emph{International Journal of Control}, vol.~46, no.~1, pp. 13--32, 1987.

\bibitem{zhu1995covariance}
G.~Zhu, K.~M. Grigoriadis, and R.~E. Skelton, ``Covariance control design for {H}ubble space telescope,'' \emph{Journal of Guidance, Control, and Dynamics}, vol.~18, no.~2, pp. 230--236, 1995.

\bibitem{ridderhof2020chance}
J.~Ridderhof, J.~Pilipovsky, and P.~Tsiotras, ``Chance-constrained covariance control for low-thrust minimum-fuel trajectory optimization,'' in \emph{AAS/AIAA Astrodynamics Specialist Conference}, 2020, pp. 9--13.

\bibitem{benedikter2022convex}
B.~Benedikter, A.~Zavoli, Z.~Wang, S.~Pizzurro, and E.~Cavallini, ``Convex approach to covariance control with application to stochastic low-thrust trajectory optimization,'' \emph{Journal of Guidance, Control, and Dynamics}, vol.~45, no.~11, pp. 2061--2075, 2022.

\bibitem{kumagai2024sequential}
N.~Kumagai and K.~Oguri, ``Sequential chance-constrained covariance steering for robust cislunar trajectory design under uncertainties,'' in \emph{AAS/AIAA Astrodynamics Specialist Conference}, 2024.

\bibitem{bhatia2009positive}
R.~Bhatia, \emph{Positive definite matrices}.\hskip 1em plus 0.5em minus 0.4em\relax Princeton university press, 2009.

\bibitem{balci2020covariance}
I.~M. Balci and E.~Bakolas, ``Covariance steering of discrete-time stochastic linear systems based on {W}asserstein distance terminal cost,'' \emph{IEEE Control Systems Letters}, vol.~5, no.~6, pp. 2000--2005, 2020.

\bibitem{balci2021convexity}
I.~M. Balci, A.~Halder, and E.~Bakolas, ``On the convexity of discrete time covariance steering in stochastic linear systems with {W}asserstein terminal cost,'' in \emph{2021 60th IEEE Conference on Decision and Control (CDC)}.\hskip 1em plus 0.5em minus 0.4em\relax IEEE, 2021, pp. 2318--2323.

\bibitem{yin2022trajectory}
J.~Yin, Z.~Zhang, E.~Theodorou, and P.~Tsiotras, ``Trajectory distribution control for model predictive path integral control using covariance steering,'' in \emph{2022 International Conference on Robotics and Automation (ICRA)}.\hskip 1em plus 0.5em minus 0.4em\relax IEEE, 2022, pp. 1478--1484.

\bibitem{saravanos2024distributed}
A.~D. Saravanos, I.~M. Balci, E.~Bakolas, and E.~A. Theodorou, ``Distributed model predictive covariance steering,'' in \emph{2024 IEEE/RSJ International Conference on Intelligent Robots and Systems (IROS)}.\hskip 1em plus 0.5em minus 0.4em\relax IEEE, 2024, pp. 5740--5747.

\bibitem{morimoto2024minimum}
K.~Morimoto and K.~Kashima, ``Minimum energy density steering of linear systems with {G}romov-{W}asserstein terminal cost,'' \emph{IEEE Control Systems Letters}, vol.~8, pp. 586--591, 2024.

\bibitem{nakashima2025formation}
H.~Nakashima, S.~Ganguly, K.~Morimoto, and K.~Kashima, ``Formation shape control using the {G}romov-{W}asserstein metric,'' \emph{arXiv preprint arXiv:2503.21538}, 2025.

\bibitem{memoli2011gromov}
F.~M{\'e}moli, ``Gromov--{W}asserstein distances and the metric approach to object matching,'' \emph{Foundations of computational mathematics}, vol.~11, no.~4, pp. 417--487, 2011.

\bibitem{salmona2022gromov}
A.~Salmona, J.~Delon, and A.~Desolneux, ``Gromov-{W}asserstein distances between {G}aussian distributions,'' \emph{Journal of Applied Probability}, vol.~59, no.~4, 2022.

\bibitem{hoshino2023finite}
K.~Hoshino, ``Finite-horizon optimal control of continuous-time stochastic systems with terminal cost of {W}asserstein distance,'' in \emph{2023 62nd IEEE Conference on Decision and Control (CDC)}.\hskip 1em plus 0.5em minus 0.4em\relax IEEE, 2023, pp. 5825--5830.

\bibitem{magnus2019matrix}
J.~R. Magnus and H.~Neudecker, \emph{Matrix differential calculus with applications in statistics and econometrics}.\hskip 1em plus 0.5em minus 0.4em\relax John Wiley \& Sons, 2019.

\bibitem{chen2015optimal}
Y.~Chen, T.~Georgiou, and M.~Pavon, ``Optimal steering of inertial particles diffusing anisotropically with losses,'' in \emph{2015 American Control Conference (ACC)}.\hskip 1em plus 0.5em minus 0.4em\relax IEEE, 2015, pp. 1252--1257.

\bibitem{ChenGeorgiouPavonPartIII}
Y.~Chen, T.~T. Georgiou, and M.~Pavon, ``Optimal steering of a linear stochastic system to a final probability distribution—{Part III},'' \emph{IEEE Transactions on Automatic Control}, vol.~63, no.~9, pp. 3112--3118, 2018.

\bibitem{brockett2015finite}
R.~W. Brockett, \emph{Finite dimensional linear systems}.\hskip 1em plus 0.5em minus 0.4em\relax SIAM, 2015.

\bibitem{shayman1986phase}
M.~A. Shayman, ``Phase portrait of the matrix {R}iccati equation,'' \emph{SIAM Journal on Control and Optimization}, vol.~24, no.~1, pp. 1--65, 1986.

\bibitem{potapov1988linear}
V.~Potapov, ``Linear fractional transformations of matrices,'' \emph{American Mathematical Society Translations: Series 2}, vol. 138, pp. 21--35, 1988.

\bibitem{sylvester1884equation}
J.~J. Sylvester, ``Sur l’{\'e}quation en matrices px= xq,'' \emph{CR Acad. Sci. Paris}, vol.~99, no.~2, pp. 67--71, 1884.

\bibitem{bhatia1997and}
R.~Bhatia and P.~Rosenthal, ``How and why to solve the operator equation $ax- xb= y$,'' \emph{Bulletin of the London Mathematical Society}, vol.~29, no.~1, pp. 1--21, 1997.

\bibitem{taussky1959similarity}
O.~Taussky and H.~Zassenhaus, ``On the similarity transformation between a matirx and its transpose,'' \emph{Pacific J. Math.}, vol.~9, no.~4, pp. 893--896, 1959.

\bibitem{lancaster1980existence}
P.~Lancaster and L.~Rodman, ``Existence and uniqueness theorems for the algebraic {R}iccati equation,'' \emph{International Journal of Control}, vol.~32, no.~2, pp. 285--309, 1980.

\bibitem{kuvcera1991algebraic}
V.~Ku{\v{c}}era, ``Algebraic {R}iccati equation: Hermitian and definite solutions,'' in \emph{The Riccati Equation, Eds. S. Bittanti, A.J. Laub and J.C.Willems}.\hskip 1em plus 0.5em minus 0.4em\relax Springer, 1991, pp. 53--88.

\bibitem{1099831}
J.~C. Willems, ``Least squares stationary optimal control and the algebraic {R}iccati equation,'' \emph{IEEE Transactions on Automatic Control}, vol.~16, no.~6, pp. 621--634, 1971.

\bibitem{milnor1965topology}
J.~W. Milnor, \emph{Topology from the differentiable viewpoint}.\hskip 1em plus 0.5em minus 0.4em\relax Princeton University Press, Princeton, New Jersey, 1965.

\bibitem{rudin1976principles}
W.~Rudin, \emph{Principles of mathematical analysis}, 3rd~ed.\hskip 1em plus 0.5em minus 0.4em\relax McGraw-Hill Science, 1976.

\bibitem{clohessy1960terminal}
W.~Clohessy and R.~Wiltshire, ``Terminal guidance system for satellite rendezvous,'' \emph{Journal of the aerospace sciences}, vol.~27, no.~9, pp. 653--658, 1960.

\bibitem{halder2018gradient}
A.~Halder and T.~T. Georgiou, ``Gradient flows in filtering and {F}isher-{R}ao geometry,'' in \emph{2018 Annual American Control Conference (ACC)}.\hskip 1em plus 0.5em minus 0.4em\relax IEEE, 2018, pp. 4281--4286.

\bibitem{daletskii1965integration}
J.~L. Daletskii and S.~G. Krein, ``Integration and differentiation of functions of {H}ermitian operators and applications to the theory of perturbations,'' \emph{AMS Translations (2)}, vol.~47, no. 1-30, pp. 10--1090, 1965.

\bibitem{horn2012matrix}
R.~A. Horn and C.~R. Johnson, \emph{Matrix analysis}, 2nd~ed.\hskip 1em plus 0.5em minus 0.4em\relax Cambridge university press, 2012.

\end{thebibliography}






\end{document}